\DeclareSymbolFont{cyrletters}{OT2}{wncyr}{m}{n}
\theoremstyle{plain}
\newtheorem{theorem}{Theorem}[section]
\newtheorem{corollary}[theorem]{Corollary}
\newtheorem{lemma}[theorem]{Lemma}
\newtheorem{proposition}[theorem]{Proposition}
\theoremstyle{definition}
\newtheorem{definition}[theorem]{Definition}
\newtheorem{rem}[theorem]{Remark}
\theoremstyle{remark}
\newtheorem*{remark}{Remark}
\newtheorem*{question}{Question}
\numberwithin{equation}{section}
\theoremstyle{plain}
\newcommand{\thistheoremname}{}
\newtheorem*{genericthm*}{\thistheoremname}
\newenvironment{namedthm*}[1]
  {\renewcommand{\thistheoremname}{#1}%
   \begin{genericthm*}}
  {\end{genericthm*}}
\newcommand{\ZZ}{{\mathbb Z}}
\newcommand{\RR}{{\mathbb R}}
\newcommand{\HH}{{\mathbb H}}
\newcommand{\CC}{{\mathbb C}}
\newcommand{\sM}{{\mathcal M}}
\DeclareMathOperator{\SL}{SL}
\DeclareMathOperator{\PSL}{PSL}
\newcommand{\Z}{\mathbb Z}
\newcommand{\C}{\mathbb C}
\renewcommand{\H}{\mathbb H}
\newcommand{\smatr}[4]{ \left( \begin{smallmatrix} #1 & #2 \\ #3 & #4\end{smallmatrix} \right) }
\newcommand{\ol}[1]{\overline{{#1}}}
\newcommand{\re}[1]{\text{Re} \left( #1 \right) }
\newcommand{\im}[1]{\text{Im} \left( #1 \right) }
\newcommand{\abs}[1]{\left|#1\right|}
\newcommand{\htE}{{\widehat{E}}}
\newcommand{\dhtE}{\skew{2.9}\widehat{\vphantom{\rule{1pt}{10.5pt}}\smash{\widehat{E}}}}
\newcommand{\htz}{{\widehat{\zeta}}}
\newcommand{\sgn}[1]{{\rm sgn}(#1)}
\newcommand{\epp}{{\epsilon}}
\begin{document}

\title[Shifted Polyharmonic Maass Forms]{Shifted Polyharmonic Maass Forms for $\PSL(2,\ZZ)$}

\author{Nickolas Andersen}
\address{UCLA Dept. of Mathematics, Los Angeles, CA 90095}
\email{nandersen@math.ucla.edu}

\author{Jeffrey C. Lagarias}
\address{Dept. of Mathematics, The University of Michigan, Ann Arbor, MI 48109-1043, USA}
\email{lagarias@umich.edu}

\author{Robert C. Rhoades}
\address{Center for Communications Research, Princeton, NJ 08540}
\email{rob.rhoades@gmail.com}

\thanks{
Research of the first author was partially supported by NSF grant DMS-1440140, while he was in residence at MSRI in Berkeley, CA during the Spring 2017 semester, and also by NSF grant DMS-1701638.
Research of the second author was partially supported by
NSF grant DMS-1401224 and DMS-1701576.
Research of the third author was partially supported by an
NSF Mathematical Sciences Postdoctoral Fellowship.}

\date{\today}

\subjclass[2010]{11F55, 11F37, 11F12}
\keywords{modular forms, polyharmonic, harmonic, Maass forms}

\begin{abstract}
We study the vector space $V_k^m(\lambda)$ of shifted polyharmonic Maass forms of weight $k\in 2\Z$, depth $m\geq 0$, and shift $\lambda\in \C$.
This space is composed of real-analytic modular forms of weight $k$ for $\PSL(2,\Z)$ with moderate growth at the cusp which are annihilated by $(\Delta_k - \lambda)^m$, where $\Delta_k$ is the weight $k$ hyperbolic Laplacian.
We treat the case $\lambda \neq 0$, complementing work of the second and third authors on polyharmonic Maass forms (with no shift).
We show that $V_k^m(\lambda)$ is finite-dimensional and bound its dimension.
We explain the role of the real-analytic Eisenstein series $E_k(z,s)$ with $\lambda=s(s+k-1)$ and of the differential operator $\frac{\partial}{\partial s}$ in this theory.
\end{abstract}

\maketitle


\section{Introduction}\label{sec:Intro}
The second and third authors initiated a study of polyharmonic Maass forms for $\PSL(2, \ZZ)$ in \cite{LR15K}. The present paper
extends this study to a more general class of such forms. 
Fix $k\in 2\Z$ and let
\begin{equation*}
	\Delta_k := y^2  \left(  \frac{\partial^2}{\partial x^2} + \frac{\partial^2}{\partial y^2} \right)  
- i ky \left( \frac{\partial}{\partial x} + i \frac{\partial}{\partial y} \right) , \qquad z=x+iy \in \C,
\end{equation*}
denote the weight $k$ hyperbolic Laplacian.%
\footnote{We follow the convention of Maass \cite{Maa83}
for the sign of the Laplacian. Other authors call $-\Delta_k$
the hyperbolic Laplacian, e.g. \cite{DIT13}.}
A \emph{classical Maass form}%
\footnote{For $\lambda =0$, this definition includes classical holomorphic modular forms of weight $k$.
Other authors use a different definition of Maass forms. See Section~\ref{sec:Background} for details.}
of weight $k$ and eigenvalue $\lambda$ for $\PSL(2,\Z)$ is a smooth function $f:\H\to \C$ with moderate (at worst polynomial) growth at $i\infty$ which satisfies
\begin{equation*}
	f\left(\frac{az+b}{cz+d}\right) = (cz+d)^k f(z) \quad \text{ for all } \smatr abcd \in \PSL(2,\Z)
\end{equation*}
and for which
\begin{equation} \label{eq:laplace-eig-condition}
	(\Delta_k - \lambda) f = 0.
\end{equation}
A theory for more general Fuchsian groups $\Gamma \subseteq \SL(2, \ZZ)$ which
do not include $-I$ would allow odd integer weights as well, but no new forms are  gained here 
since all odd integer weight modular forms on $\PSL(2, \ZZ)$ must vanish identically.

In this work we study the situation when the Laplacian eigenfunction condition \eqref{eq:laplace-eig-condition} is relaxed to require only that
\begin{equation}\label{poly-h-eqn}
 \left( \Delta_k - \lambda \right) ^m f (z)= 0
\end{equation}
for some non-negative integer $m$. 
We denote the vector space of such functions by $V_k^m(\lambda)$,
and we call such $f$  {\em shifted polyharmonic Maass forms of depth $m$}  with {\em eigenvalue} $\lambda$ (or \emph{shifted m-harmonic Maass forms with eigenvalue $\lambda$}).
The integer parameter $m$  in \eqref{poly-h-eqn} is termed 
the  {\em (shifted) harmonic depth}  rather than  {\em  order} (as in PDEs)
because  the term  \emph{order} is 
used  in   conflicting ways in the literature (see \cite{LR15K}).

Our object in this paper is to establish properties of  the vector spaces $V_k^m(\lambda)$ for eigenvalue shifts $\lambda \ne 0$.
The case $\lambda=0$ was previously  treated   in \cite{LR15K}.
We  show that $V_k^m(\lambda)$ is finite-dimensional, determine an upper bound for the dimension,
and exhibit linearly independent forms in these spaces. 
The finite dimensionality of the  spaces $V_k^m(\lambda)$ has entirely to do with the moderate growth condition; if this is relaxed, then 
the resulting space of solutions can be infinite dimensional. As in \cite{LR15K}   
new members of the vector spaces over the harmonic depth $1$ case
involve  derivatives in the $s$-variable of (properly scaled) non-holomorphic Eisenstein series 
$E_k(z, s)$.

This case $\lambda=0$ has  exceptional properties
 which justify its separate treatment in \cite{LR15K}.
It includes all the holomorphic modular forms; the paper \cite{LR15K} explains that holomorphic 
forms should be assigned harmonic depth a half-integer.  
In that paper,  functions  in $V_k^m(0)$ are named   {\em polyharmonic Maass forms}, in parallel with the literature
on polyharmonic functions,
which are functions annihilated by
a power $\Delta^m$ of the Euclidean Laplacian.
(For work on Euclidean polyharmonic functions
see  Almansi \cite{Al1899}, Aronszajn, Crease and Lipkin \cite{ACL83}, Render \cite{Ren08}, Mitrea \cite[Chap. 7]{Mit13}.)

Our results involve the  non-holomorphic Eisenstein series $E_k(z, s)$ of weight $k$  for  $\PSL(2, \ZZ)$ given by
the series
\begin{equation}\label{Eis-k}
E_{k}(z, s) :=\frac{1}{2} \sum_{(m,n) \in \ZZ^2 \backslash  (0,0)} \frac{y^s}{|mz+n|^{2s} (mz+n)^{k}},
\end{equation}
which converges absolutely for $\re s > 1-\frac{k}{2}$, and has a meromorphic continuation in
the $s$-variable (see Section \ref{sec:NHE}). 
The series is well-defined for all $k\in \Z$, but  vanishes identically for odd $k$.
It is shifted $1$-harmonic with eigenvalue $\lambda= s(s+k-1)$.
We will consider the {\em doubly-completed Eisenstein series}
$$
\dhtE_k(s) := \left(s+ \mfrac{k}{2}\right) \left(s +\mfrac{k}{2}-1\right)\pi^{-s-\frac{k}{2}}\Gamma\left(s+ \mfrac{k}{2} + \mfrac{|k|}{2}\right) E_k(z, s),
$$
which one can show is an entire function of $s$ for each $z \in \HH$ (see Section~\ref{sec:NHE}).
We write the Taylor series expansions of the doubly-completed Eisenstein series 
at $s=s_0 \in \CC$ as
$$
\dhtE_{k}(z, s) = \sum_{j=0}^{\infty} \frac{1}{j!} \dhtE_k^{[j]}(z; s_0) (s- s_0)^j.
$$


\begin{theorem}\label{thm:main1}
Fix $k\in 2\Z$. 
For $\lambda\in \CC$ fix $s_0 \in \CC$ such that $\lambda= s_0( s_0 +k -1)$.
\begin{enumerate}
\item The complex vector space $V_k^m(\lambda)$ is finite dimensional, with
\begin{equation*}
	\dim V_k^m(\lambda) \leq m + m \dim S_k^1(\lambda),
\end{equation*}
where $S_k^1(\lambda)$ is the space of Maass cusp forms of weight $k$ and eigenvalue $\lambda$.

\item This space decomposes as
\[
	V_k^m(\lambda) =E_k^m(\lambda) \oplus S_k^m(\lambda),
\]
in which the Eisenstein series space $E_k^m(\lambda)$ is spanned by certain 
Taylor coefficients of shifted Eisenstein series  and $S_k^m(\lambda)$ is
a recursively defined space of  ``generalized $m$-harmonic Maass cusp forms.''
Both vector spaces $E_k^m(\lambda)$ and
$S_k^m(\lambda)$ are closed under the action of $\Delta_k - \lambda$.
	
\item For all  $\lambda \in \CC$ the space $E_k^m(\lambda)$ has dimension $m$.
\begin{enumerate}[(i)]
 	\item
    For $\lambda \ne -(\frac{1-k}{2})^2$  it  has a basis consisting of 
    the Taylor coefficient functions 
    \[
    	\dhtE^{[j+r]}(z; s_0) := \frac{\partial^{j+r}}{\partial s^{j+r}} \dhtE_k(z; s) \big|_{s= s_0}
		\quad \mbox{for} \quad 0 \le j \le m-1,
	\]
 	where $r$ is minimal such that 
	$\dhtE_k^{[r]}(z; s_0) \not\equiv 0$. Here $r=0$ unless  $\lambda=\frac k2(1-\frac k2)$
	and $k \ne 0$, in which case   $r=1$. 
	
	\item
	For $\lambda = - (\frac{1-k}{2})^2$ and $s_0 =\frac{1-k}{2}$,  a basis is given by  the even-indexed Taylor
	coeffcient functions $\dhtE^{[2j]}(z; s_0)$ 
	for $0 \le j \le m-1$. All odd-indexed functions $\dhtE^{[2j+1]}(z; s_0) \equiv 0$.
\end{enumerate}

\item 
For $m \ge 1$ one has 
\[
	\dim \big( S_k^{m}(\lambda)\big) \le m \dim \big(S_k^1(\lambda) \big).
\]
\end{enumerate}
\end{theorem}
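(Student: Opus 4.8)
The plan is to run an induction on the depth $m$, using the operator $D := \Delta_k - \lambda$ as the engine. The base case $m = 1$ is the tautology $\dim S_k^1(\lambda) \le 1 \cdot \dim S_k^1(\lambda)$, so suppose the bound $\dim S_k^{m-1}(\lambda) \le (m-1)\dim S_k^1(\lambda)$ holds and consider $S_k^m(\lambda)$. By part (2) the space $S_k^m(\lambda)$ is closed under $D$, and any $f \in S_k^m(\lambda) \subseteq V_k^m(\lambda)$ satisfies $(\Delta_k - \lambda)^{m-1}(Df) = (\Delta_k - \lambda)^m f = 0$, so $Df \in V_k^{m-1}(\lambda)$. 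Since $D$ preserves cuspidality, the image lies in the cuspidal summand, i.e. $D$ restricts to a linear map $D \colon S_k^m(\lambda) \to S_k^{m-1}(\lambda)$.

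Next I would identify the kernel of this restricted map. An element $f$ of $S_k^m(\lambda)$ with $Df = 0$ is a cuspidal form annihilated by $\Delta_k - \lambda$, hence a genuine Maass cusp form of weight $k$ and eigenvalue $\lambda$; conversely, every such form already lies in $S_k^m(\lambda)$, since depth $1$ is contained in depth $m$. Thus $\ker\big(D|_{S_k^m(\lambda)}\big) = S_k^1(\lambda)$. The rank--nullity theorem then gives
\[
	\dim S_k^m(\lambda) = \dim S_k^1(\lambda) + \dim D\big(S_k^m(\lambda)\big) \le \dim S_k^1(\lambda) + \dim S_k^{m-1}(\lambda),
\]
where the inequality uses $D\big(S_k^m(\lambda)\big) \subseteq S_k^{m-1}(\lambda)$. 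Combining this with the inductive hypothesis yields $\dim S_k^m(\lambda) \le \dim S_k^1(\lambda) + (m-1)\dim S_k^1(\lambda) = m\dim S_k^1(\lambda)$, completing the induction.

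The step I expect to be the main obstacle is verifying rigorously that $D$ maps the cuspidal summand $S_k^m(\lambda)$ into the cuspidal summand $S_k^{m-1}(\lambda)$, rather than merely into $V_k^{m-1}(\lambda)$. This is where the precise recursive definition of $S_k^m(\lambda)$ and the characterization of cuspidality (the vanishing, or appropriate decay, of the zeroth Fourier coefficient) must be invoked: one has to check that applying $\Delta_k$ to a form whose constant term decays produces a form whose constant term still decays, so that no Eisenstein-type component is created and the decomposition of part (2) is compatible with the filtration of the $V_k^{\bullet}(\lambda)$ by depth. Concretely, this amounts to checking exactness of $0 \to S_k^1(\lambda) \to S_k^m(\lambda) \xrightarrow{D} S_k^{m-1}(\lambda)$ at its first two terms, together with well-definedness of the target. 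Once this compatibility is in hand, the dimension count above is purely formal.
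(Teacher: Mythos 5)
Your proposal proves only part (4) of the theorem, and it does so by quoting part (2) as an input (``By part (2) the space $S_k^m(\lambda)$ is closed under $D$\dots''). That makes it circular as a proof of the statement: the space $S_k^m(\lambda)$ has no a priori definition independent of the theorem --- it is \emph{constructed} during the proof of (2) --- and parts (1)--(3) are where essentially all of the work lies. In the paper, (1) is proved by showing that $\Delta_k-\lambda$ induces an injective map on the quotients $W_k^m(\lambda):=V_k^m(\lambda)/V_k^{m-1}(\lambda)$, so that $\dim W_k^m(\lambda)\le\dim W_k^{m-1}(\lambda)$, with the base case coming from the classical decomposition $V_k^1(\lambda)=E_k^1(\lambda)\oplus S_k^1(\lambda)$; (3) requires the Taylor-coefficient recursion $(\Delta_k-\lambda)\dhtE_k^{[n]}(z;s_0)=n(2s_0+k-1)\dhtE_k^{[n-1]}(z;s_0)+n(n-1)\dhtE_k^{[n-2]}(z;s_0)$ together with the nonvanishing of constant terms (Theorem~\ref{thm:74}); and (2) requires defining a Hermitian pairing $\langle F,G\rangle_{s_0}$ on constant-term coefficients, setting $S_k^m(\lambda):=\{G: (\Delta_k-\lambda)G\in S_k^{m-1}(\lambda),\ \langle\widetilde E_k,G\rangle_{s_0}=0\}$ where $\widetilde E_k:=\dhtE_k^{[r]}(\cdot\,;s_0)$, and then proving $E_k^m(\lambda)\cap S_k^m(\lambda)=\{0\}$ and $E_k^m(\lambda)+S_k^m(\lambda)=V_k^m(\lambda)$ by a dimension count. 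None of this appears in your proposal. (Once (1)--(3) are in hand, the paper's own proof of (4) is one line: $\dim S_k^m(\lambda)=\dim V_k^m(\lambda)-\dim E_k^m(\lambda)\le m+m\dim S_k^1(\lambda)-m$.)

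There is also a genuine flaw inside your argument for (4) itself. Your identification $\ker\bigl(D|_{S_k^m(\lambda)}\bigr)=S_k^1(\lambda)$, and the repair strategy you sketch (``applying $\Delta_k$ to a form whose constant term decays produces a form whose constant term still decays''), presume that elements of $S_k^m(\lambda)$ are cuspidal in the classical sense of vanishing constant term. The paper explicitly warns that this is \emph{not known} for $m\ge 2$: whether ``generalized cusp forms'' have identically zero constant term is an open question there (this is exactly the ``misnomer'' caveat following the Question in Section~\ref{sec:Intro}), so no argument may assume it. What saves the kernel computation is the orthogonality condition in the actual definition: if $f\in S_k^m(\lambda)$ and $(\Delta_k-\lambda)f=0$, then $f\in V_k^1(\lambda)$, so $f=c\,\widetilde E_k+g$ with $g\in S_k^1(\lambda)$, and then
\[
	0=\bigl\langle \widetilde E_k,f\bigr\rangle_{s_0}
	=c\left(\bigl|c_{0,0}^+(\widetilde E_k)\bigr|^2+\bigl|c_{0,0}^-(\widetilde E_k)\bigr|^2\right),
\]
which forces $c=0$ because $\widetilde E_k$ has nonvanishing constant term. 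With that substitution your rank--nullity induction $\dim S_k^m(\lambda)\le\dim S_k^1(\lambda)+\dim S_k^{m-1}(\lambda)$ is a correct alternative derivation of (4) from (2) (the paper instead deduces (4) from (1) and (3)), but it cannot stand as a proof of the theorem.
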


While the space $S_k^{1}(\lambda)$
comprises the usual Maass cusp forms,
Theorem \ref{thm:main1} leaves open the question of determining 
the dimension of the space of ``generalized $m$-harmonic Maass cusp forms''  $S_k^m(\lambda)$ for $m \ge 2$.
In  \cite[Sect. 6.3]{LR15K} the second and third authors showed for $\lambda=0$ (the holomorphic cusp form case) that  $S_k^2(0) = S_k^1(0),$
which in turn forces $S_k^m(0) = S_k^1(0)$ for all $m \ge 1$. The  proof given there  was
 specific to the assumption $\lambda=0$.
One may ask:
\begin{question}
	Is it true for general $\lambda \in \CC$  that $S_k^m(\lambda) = S_k^1(\lambda)$ for all $m \ge 1$?
\end{question}

This question asks whether there is a nonzero  element of $S_k^1$ that has a ``liftability'' property, i.e. is the image of some element in
$V_k^2(\lambda)$.
If some cusp forms can be ``lifted'', then we warn the reader that the  term ``cusp form'' could be a misnomer in our recursive definition of $S_k^m(\lambda)$;
that is,  we do not know whether all the forms in $S_k^m(\lambda)$ 
will have  identically zero  constant term in their  Fourier expansion. 

The next two results concern the preservation of the spaces $V_k^m(\lambda)$ under various differential operators.
The first of these  concerns the actions of the {\em weight $k$ Maass  raising operator}
\[
R_k := 2i \frac{\partial}{\partial z} + \frac{k}{y} = i\left( \frac{\partial}{\partial x} - i \frac{\partial}{\partial y} \right) + \frac{k}{y},
\]
and {\em weight $k$ Maass lowering operator}
\[
L_k := 2i y^2\frac{\partial}{\partial \bar{z}}  = iy^2\left( \frac{\partial}{\partial x} + i \frac{\partial}{\partial y} \right)
\]
on these vector spaces.  Note that the operator $L_k$ does not depend on $k$.

\begin{theorem}\label{thm:main3}
\begin{enumerate}
\item There holds
\begin{equation} \label{eq:thm-rk}
	R_k \big(V_k^m(\lambda)\big) \subset V_{k+2}^m(\lambda +k)
\end{equation}
and
\begin{equation} \label{eq:thm-lk}
	L_k\big( V_k^m(\lambda)\big) \subset V_{k-2}^m(\lambda + 2-k).
\end{equation}

\item The map \eqref{eq:thm-rk} is an isomorphism when $\lambda+k\neq 0$.
The map \eqref{eq:thm-lk} is an isomorphism when $\lambda \neq 0$.
\end{enumerate}
\end{theorem}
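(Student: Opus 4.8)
The plan is to deduce both parts from two pairs of ``ladder'' identities relating the Maass operators to the Laplacian. A direct (if slightly tedious) computation on the generators $\partial_x,\partial_y,1/y$ yields the four factorization identities
\[
  L_{k+2}R_k=\Delta_k+k,\qquad R_kL_{k+2}=\Delta_{k+2},
\]
\[
  R_{k-2}L_k=\Delta_k,\qquad L_kR_{k-2}=\Delta_{k-2}+(k-2),
\]
valid as identities of differential operators on smooth functions. From these one reads off the intertwining relations $\Delta_{k+2}R_k=R_kL_{k+2}R_k=R_k(\Delta_k+k)$ and $\Delta_{k-2}L_k=(L_kR_{k-2}-(k-2))L_k=L_k(\Delta_k+2-k)$. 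Iterating gives $(\Delta_{k+2}-(\lambda+k))^mR_k=R_k(\Delta_k-\lambda)^m$ and $(\Delta_{k-2}-(\lambda+2-k))^mL_k=L_k(\Delta_k-\lambda)^m$, which is exactly what part (1) requires: if $f$ is annihilated by $(\Delta_k-\lambda)^m$ then $R_kf$ and $L_kf$ are annihilated by the correspondingly shifted operators. Modularity and moderate growth are handled routinely: $R_k$ and $L_k$ are the standard weight-raising and weight-lowering operators, so they intertwine the weight-$k$ and weight-$(k\pm2)$ slash actions, and since each is a first-order operator whose coefficients involve only $1/y$, $y^2$, and the Cauchy--Riemann operators, it sends moderate growth at the cusp to moderate growth. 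This establishes the inclusions \eqref{eq:thm-rk} and \eqref{eq:thm-lk}.

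For part (2) the key observation is that on $V_k^m(\lambda)$ the operator $\Delta_k-\lambda$ is \emph{nilpotent}: it preserves $V_k^m(\lambda)$ (it commutes with $(\Delta_k-\lambda)^m$ and preserves modularity and growth) and satisfies $(\Delta_k-\lambda)^m=0$ there by definition. Hence for any scalar $c$ the operator $cI+(\Delta_k-\lambda)$ is invertible on $V_k^m(\lambda)$ precisely when $c\neq0$, with inverse the finite Neumann sum $c^{-1}\sum_{j=0}^{m-1}\big(-(\Delta_k-\lambda)/c\big)^j$. Now restrict the factorizations to the relevant spaces. On $V_k^m(\lambda)$ one has $L_{k+2}R_k=(\lambda+k)I+(\Delta_k-\lambda)$, and on the target $V_{k+2}^m(\lambda+k)$ one has $R_kL_{k+2}=(\lambda+k)I+(\Delta_{k+2}-(\lambda+k))$; both are (scalar)$\,+\,$(nilpotent) with the same scalar $\lambda+k$. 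When $\lambda+k\neq0$ both are invertible, so $R_k$ is injective (from invertibility of $L_{k+2}R_k$) and surjective (from invertibility of $R_kL_{k+2}$), hence an isomorphism. The argument for $L_k$ is identical: $R_{k-2}L_k=\lambda I+(\Delta_k-\lambda)$ on the source and $L_kR_{k-2}=\lambda I+(\Delta_{k-2}-(\lambda+2-k))$ on the target $V_{k-2}^m(\lambda+2-k)$, both with scalar part $\lambda$, so $L_k$ is an isomorphism exactly when $\lambda\neq0$.

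The step I expect to be the main obstacle is not the linear algebra—which is forced once the factorizations are in hand—but pinning down the precise constants in those four identities and verifying that each operator genuinely preserves the full defining data of $V_k^m$. Two bookkeeping points require care. First, the additive constants $k$, $0$, and $k-2$ are exactly what determine the vanishing loci $\lambda+k=0$ and $\lambda=0$, so the sign convention for $\Delta_k$ must be tracked scrupulously through the computation of the compositions; an error here would shift the exceptional eigenvalues. Second, one must check that the inverse maps produced above—$L_{k+2}$ (resp.\ $R_{k-2}$) composed with a polynomial in the Laplacian—again land in $V_k^m(\lambda)$ with the correct weight, eigenvalue shift, and moderate growth, so that the isomorphism is realized \emph{within} the stated spaces. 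Once these two verifications are complete, part (2) follows immediately from the scalar-plus-nilpotent dichotomy.
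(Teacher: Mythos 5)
Your algebraic skeleton coincides with the paper's. For part (1) the paper uses the same factorization/intertwining identities (Lemmas~\ref{RLemma2} and \ref{RLemma3}), and for part (2) your ``scalar plus nilpotent'' inversion of the compositions $L_{k+2}R_k$ and $R_kL_{k+2}$ is the same mechanism as the paper's Lemma~\ref{lem:LkRk-iso} and Proposition~\ref{prop:RkLk-iso} (there phrased as an induction on depth plus a dimension comparison; your Neumann-series phrasing is equivalent). One bookkeeping remark: the paper follows Maass's sign convention for $\Delta_k$, under which the correct identities are $-\Delta_k = L_{k+2}R_k + k$ and $-\Delta_k = R_{k-2}L_k$ (Lemma~\ref{RLemma2}); your four identities carry the opposite global sign (test on $k=0$, $f=y^s$: here $\Delta_0 y^s = s(s-1)y^s$ while $R_{-2}L_0\, y^s = -s(s-1)y^s$). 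Since the sign flips consistently on both sides of every relation you use, your intertwining identities and the exceptional loci $\lambda+k=0$ and $\lambda=0$ come out unchanged, so this slip is self-cancelling rather than fatal.

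The genuine gap is moderate growth, which you dismiss as routine; it is in fact the heart of the matter, and the paper says so explicitly after the theorem statement (``the key property established is that these operators acting on individual Fourier coefficients preserve the moderate growth property''). Your justification --- that $R_k$ and $L_k$ are first-order operators whose coefficients involve only $1/y$ and $y^2$ --- proves nothing: differentiation does not preserve polynomial growth in general (a smooth function bounded by $y^A$ can have $x$- and $y$-derivatives growing arbitrarily fast), so the moderate-growth hypothesis on $f$ alone gives no bound whatsoever on $R_kf$ or $L_kf$. The same unproved assertion is hidden in your part (2): to make $\Delta_k-\lambda$ nilpotent \emph{as an operator on} $V_k^m(\lambda)$, so that the Neumann series makes sense, you must know that $\Delta_k-\lambda$ preserves moderate growth. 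What is needed is an argument exploiting the differential equation or the Fourier expansion. The paper's route (Proposition~\ref{RLemma4}) is: expand $f$ as in Theorem~\ref{lem:abstractFourier}; combine the moderate growth of $f$ with the Whittaker asymptotics of Proposition~\ref{prop:whit-deriv-asy} to deduce a polynomial bound $c_{n,j}^- \ll |n|^A$ on the Fourier coefficients; observe (Lemma~\ref{lem:Rk-Lk-u}) that $R_k$ and $L_k$ carry each coefficient function $u_{k,n}^{[j],-}$ to an explicit constant times the corresponding weight-$(k\pm 2)$ coefficient function; and then resum, using the exponential decay of these functions. (An interior elliptic estimate on hyperbolic balls of fixed radius, using $(\Delta_k-\lambda)^m f=0$, would also do.) With such an argument supplied, the rest of your proof goes through.
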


This result is proved as Propositions~\ref{RLemma4} and \ref{prop:RkLk-iso}.
The key property established is that these operators 
acting on individual Fourier coefficients preserve
the moderate growth property.

Our second result for differential operators determines the action of 
the Bruinier-Funke differential  operator $\xi_k :=2i y^k \ol{\frac{\partial}{\partial \ol{z}}}$
on shifted polyharmonic forms
of weight $k$.

\begin{theorem}\label{thm:main2}
For $k \in 2\ZZ$ and all $m \ge 1$, the  antiholomorphic differential operator $\xi_k$
maps $V_k^m(\lambda)$ to $V_{2-k}^m(\ol{\lambda})$.
For $\lambda \ne 0$ this map is a vector space isomorphism. 
\end{theorem}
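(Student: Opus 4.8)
The plan is to reduce the whole statement to two facts: a factorization of the weight-$k$ Laplacian through $\xi_k$, and the nilpotence of $\Delta_k-\lambda$ on $V_k^m(\lambda)$. First I would record the elementary structural properties of $\xi_k$. It is conjugate-linear, $\xi_k(cf)=\ol{c}\,\xi_k(f)$, and a direct check of the automorphy factors shows it sends weight-$k$ invariant functions to weight-$(2-k)$ invariant ones; since $f$ is real-analytic so is $\xi_k f$. The only analytic point needing care is moderate growth. Here I would use the identity $\xi_k f = -y^{k-2}\,\ol{L_k f}$, which follows directly from the definitions: by Theorem~\ref{thm:main3} the lowering $L_k f$ lies in $V_{k-2}^m(\lambda+2-k)$ and hence already has moderate growth, complex conjugation preserves polynomial bounds, and multiplication by the power $y^{k-2}$ keeps growth polynomial, so $\xi_k f$ has moderate growth at the cusp.

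The heart of the matter is the factorization
\begin{equation*}
\xi_{2-k}\circ \xi_k = \Delta_k,
\end{equation*}
which I would verify by a direct computation: using $\ol{\partial_{\ol z}g}=\partial_z\ol g$, one finds $\ol{\xi_k f}=-2iy^k\partial_{\ol z}f$, so that $\xi_{2-k}(\xi_k f)=2iy^{2-k}\partial_z\ol{\xi_k f}=4y^{2-k}\partial_z\bigl(y^k\partial_{\ol z}f\bigr)$. Expanding with $\partial_z y^k=\tfrac{k}{2i}y^{k-1}$, $4\partial_z\partial_{\ol z}=\partial_x^2+\partial_y^2$, and $2\partial_{\ol z}=\partial_x+i\partial_y$ collapses this to $y^2(\partial_x^2+\partial_y^2)f-iky(\partial_x+i\partial_y)f=\Delta_k f$. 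Replacing $k$ by $2-k$ gives the companion identity $\xi_k\circ\xi_{2-k}=\Delta_{2-k}$. From these the intertwining relation is immediate, $\Delta_{2-k}\circ\xi_k=\xi_k\circ\xi_{2-k}\circ\xi_k=\xi_k\circ\Delta_k$, and since $\xi_k(\lambda f)=\ol\lambda\,\xi_k f$ by conjugate-linearity we obtain $(\Delta_{2-k}-\ol\lambda)\circ\xi_k=\xi_k\circ(\Delta_k-\lambda)$. Iterating $m$ times yields $(\Delta_{2-k}-\ol\lambda)^m\circ\xi_k=\xi_k\circ(\Delta_k-\lambda)^m$; applying this to $f\in V_k^m(\lambda)$ shows $(\Delta_{2-k}-\ol\lambda)^m\xi_k f=0$. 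Combined with the first paragraph, $\xi_k$ maps $V_k^m(\lambda)$ into $V_{2-k}^m(\ol\lambda)$.

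For the isomorphism when $\lambda\neq 0$, I would exploit nilpotence. By definition $\Delta_k-\lambda$ is nilpotent on $V_k^m(\lambda)$, so on this space $\Delta_k=\lambda\cdot\mathrm{Id}+(\Delta_k-\lambda)$ is a nonzero scalar plus a nilpotent operator, hence invertible precisely because $\lambda\neq 0$ (its inverse is the finite series $\lambda^{-1}\sum_{j\ge 0}\bigl(-(\Delta_k-\lambda)/\lambda\bigr)^j$). The factorization $\xi_{2-k}\circ\xi_k=\Delta_k$ then forces $\xi_k$ to be injective on $V_k^m(\lambda)$. Symmetrically, since $\lambda\neq 0$ forces $\ol\lambda\neq 0$, the operator $\Delta_{2-k}=\xi_k\circ\xi_{2-k}$ is invertible on $V_{2-k}^m(\ol\lambda)$ (here $\xi_{2-k}$ maps $V_{2-k}^m(\ol\lambda)$ into $V_k^m(\lambda)$ by the ``into'' statement already proved for weight $2-k$), which forces $\xi_k$ to be surjective onto $V_{2-k}^m(\ol\lambda)$. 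Hence $\xi_k$ is a conjugate-linear bijection, i.e. a vector-space isomorphism, and in particular $\dim V_k^m(\lambda)=\dim V_{2-k}^m(\ol\lambda)$.

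I expect the main obstacle to be the bookkeeping in the factorization computation $\xi_{2-k}\circ\xi_k=\Delta_k$ — keeping the conjugations, the factors of $2i$, and the $y$-derivatives straight so that the result matches $\Delta_k$ exactly with no stray sign or constant — together with confirming that moderate growth genuinely survives the application of $\xi_k$. Once these are in hand, the intertwining, the passage to the $m$-th power, and the nilpotence argument for bijectivity are all purely formal.
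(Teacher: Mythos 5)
Your proof is correct. The containment $\xi_k\big(V_k^m(\lambda)\big)\subseteq V_{2-k}^m(\ol{\lambda})$ is established exactly as in the paper: modularity and weight shift, the eigenvalue/depth shift via the factorization $\Delta_k=\xi_{2-k}\xi_k$ and the intertwining relation $(\Delta_{2-k}-\ol{\lambda})\,\xi_k=\xi_k\,(\Delta_k-\lambda)$ iterated $m$ times (this is Lemma~\ref{lem:EigShift}, resting on \eqref{eq:deltak-xik}), and moderate growth via $\xi_k=-y^{k-2}\,\ol{L_k}$ together with Proposition~\ref{RLemma4} (equation \eqref{eq:xik-Lk}). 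Where you genuinely depart from the paper is the isomorphism for $\lambda\neq 0$. The paper (Proposition~\ref{lem:subspaceXi}) deduces it from Proposition~\ref{prop:RkLk-iso}(2) --- the statement that $L_k:V_k^m(\lambda)\to V_{k-2}^m(\lambda+2-k)$ is an isomorphism when $\lambda\neq 0$ --- transported to $\xi_k$ by the conjugate-linear bijection $g\mapsto -y^{k-2}\ol{g}$; that proposition in turn rests on Lemma~\ref{lem:LkRk-iso}, whose inductive surjectivity argument for $R_{k-2}L_k=-\Delta_k$ is the Maass-operator incarnation of your observation that a nonzero scalar plus a nilpotent operator is invertible. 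You instead run the two-sided factorization trick directly on the $\xi$-operators: invertibility of $\Delta_k=\xi_{2-k}\xi_k$ on $V_k^m(\lambda)$ gives injectivity of $\xi_k$, and invertibility of $\Delta_{2-k}=\xi_k\xi_{2-k}$ on $V_{2-k}^m(\ol{\lambda})$ (using the containment already proved at weight $2-k$) gives surjectivity. Your route is more self-contained for this particular theorem, needing nothing from Section~\ref{sec:MaassOps} beyond the growth statement of Proposition~\ref{RLemma4}, whereas the paper's route makes the isomorphism a one-line corollary of Theorem~\ref{thm:main3}, which it must prove anyway. One fact that both treatments use implicitly (the paper does so in the proof of Lemma~\ref{lem:LkRk-iso}): $\Delta_k$ preserves $V_k^m(\lambda)$, i.e.\ $(\Delta_k-\lambda)f$ again has moderate growth; this follows from the Fourier expansion of Theorem~\ref{lem:abstractFourier}, and is worth recording if you want your nilpotence argument airtight.
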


This result is proved as Proposition~\ref{lem:subspaceXi}. 
Again a key property  established is that the operator $\xi_k$ 
preserves the  moderate growth property  of these functions at the cusp.

Figure \ref{fig:tower-ladder}  pictures the action of these maps, along with $\Delta_k - \lambda$,
acting on these spaces. It has a 
``tower'' and ``ladder'' structure for
weights $k \ge 2$, paired with the dual weight $2-k \le 0$,
at the level of vector spaces.
 
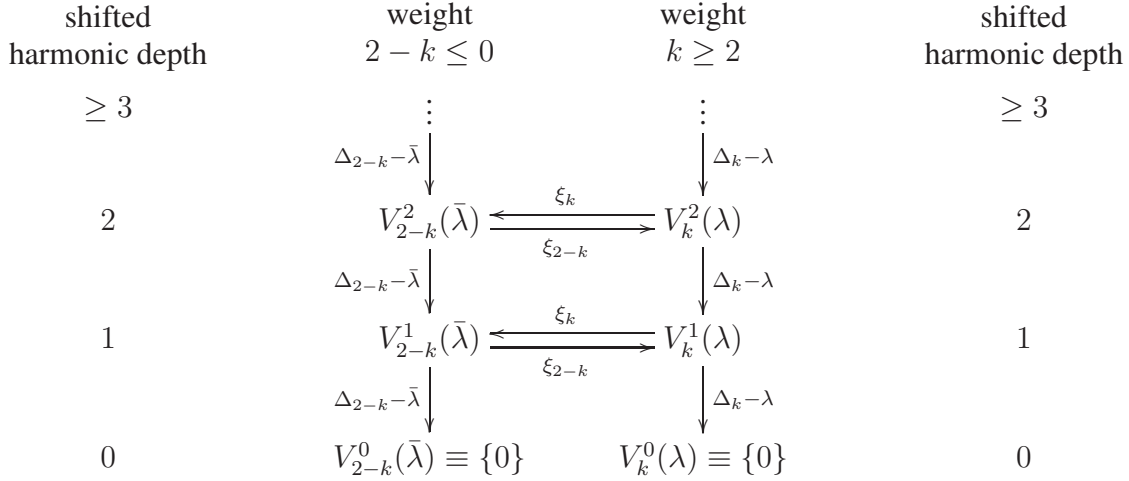
\begin{figure}[h]
\[
	\xymatrix{
		\parbox{1.5in}{\center shifted \\ harmonic depth \\[0.5em] $\geq 3$} &
		\parbox{1in}{\center weight \\ $2-k\leq 0$ \\[0.5em] $\vdots$} \ar[d]_(.6){\Delta_{2-k}-\bar\lambda}
				& \parbox{1in}{\center weight \\ $k\geq 2$ \\[0.5em] $\vdots$} \ar[d]^(.6){\Delta_{k}-\lambda}
				& \parbox{1.5in}{\center shifted \\ harmonic depth \\[0.5em] $\geq 3$} \\
		2 & V_{2-k}^2(\bar\lambda) \ar@<-0.5ex>[r]_{\xi_{2-k}} \ar[d]_{\Delta_{2-k}-\bar\lambda}
  			 & V_{k}^2(\lambda) \ar@<-0.5ex>[l]_{\xi_k} \ar[d]^{\Delta_{k}-\lambda} & 2   \\
  		1 & V_{2-k}^1(\bar\lambda) \ar@<-0.5ex>[r]_{\xi_{2-k}} \ar[d]_{\Delta_{2-k}-\bar\lambda}
  			& V_{k}^1(\lambda) \ar@<-0.5ex>[l]_{\xi_k} \ar[d]^{\Delta_{k}-\lambda} & 1  \\
  		0 & V_{2-k}^0(\bar\lambda) \equiv \{0\}  
  			& V_{k}^0(\lambda) \equiv \{0\}  & 0             
  	}
\]
\caption{Tower and ladder structure for shifted polyharmonic vector spaces (eigenvalue $\lambda\neq 0$).}
\label{fig:tower-ladder}
\end{figure}

When $\lambda=0$ the action of $\xi_k: V_k^m(0) \to V_{2-k}^m(0)$  is never an isomorphism.
In that case  there is, in contrast, a ``tower" and ``ramp" structure
between weights $k$ and $2-k$  for the action of these maps, cf.  \cite[Figures 1 and 2]{LR15K}.

The finite-dimensionality  results in Theorem \ref{thm:main1} are proved using general results on the form of 
Fourier expansions of  polyharmonic
functions $f$ having eigenvalue shift $\lambda$ that are invariant under  $z \mapsto z+1$.
The relevant property of Fourier coefficients of harmonic depth $m$  is that 
the  $2m$-dimensional space of harmonic depth $m$ eigenfunctions for an individual (non-constant term) Fourier coefficient 
contains an $m$-dimensional subspace comprising  the full set of eigenfunctions
having moderate growth (in fact, fast decay) at the cusp.  Our proof of this fact, given in Appendix A, rests on explicit calculation of
asymptotics of the eigenfunction families of the associated ordinary differential operators.
The proof of Theorem \ref{thm:main1} does not make use of the Bruinier-Funke anti-holomorphic differential operator $\xi_k$,
in contrast to \cite{LR15K}.

These results on the
allowed form of polyharmonic Fourier coefficients  apply  more generally to such expansions for subgroups of
the modular group at  cusps of any width, with rescaling.  They also
will apply to finite covering groups of $\SL(2, \RR)$, such as the metaplectic group $\operatorname{Mp}(2, \RR)$,
thus extending to  the case of half-integer
weight modular forms on suitable discrete subgroups. That is, one may expect derivatives in
the $s$-variable of (non-holomorphic) Eisenstein series to give 
polyharmonic modular forms more generally.

Section~\ref{sec:Background} 
contains known results on Maass cusp forms (Section~\ref{sec:MaassCusp}),
results concerned with Maass's calculations related to non-holomorphic Eisenstein series (Section~\ref{sec:MaassSeries}), and basic facts on non-holomorphic Eisenstein series,
including their Fourier expansions and functional equations (Section~\ref{sec:NHE}).
Section~\ref{sec:newsec4} discusses the Fourier expansions of polyharmonic
Maass forms. 
Section~\ref{sec:MaassOps}
studies the action of Maass raising and lowering operators on the vector spaces $V_k^m(\lambda)$.
Section~\ref{sec:newsec5}  presents results about the 
Bruinier-Funke 
non-holomorphic differential operator $\xi_k$,
showing that it preserves the property of moderate
growth for shifted polyharmonic Maass forms for any eigenvalue $\lambda$.
In Section~\ref{sec:Xi-action2} we compute the action of $\xi_k$ on the Eisenstein series.
In Section~\ref{sec:TS} we give recursions for the Taylor series coefficients
of $\dhtE_k(z; s_0)$ in the $s$-variable, which are functions of $z$,
and determine recursion relations for the action of $\Delta_k$ on
the Taylor series  coefficients. 
In Section~\ref{sec:Proofs} we give the proof  of 
Theorem \ref{thm:main1}.
There are two appendices treating subsidiary topics.
Appendix \ref{sec:AppA} gives asymptotic expansions of derivatives of
the Whittaker $W$-functions $W_{\kappa,\mu}(y)$ in the second index $\mu$.
Appendix \ref{sec:AppC} sketches  an alternate proof of Proposition~\ref{pr56} 
giving the action of the $\xi_k$-operator on 
the non-holomorphic Eisenstein series.


\section{Background Results} \label{sec:Background}

The definition of Maass forms given in Section~\ref{sec:Intro} includes
classical holomorphic modular forms of weight $k \in 2\ZZ$ when $\lambda=0$;  
this definition was adopted for much recent work treating mock theta functions
and other mock modular forms.
We note however that the  original treatment of  Maass \cite{Maa53}, \cite[Chap. IV]{Maa83} and  
other later treatments  (e.g. Bump \cite[Sect. 2.1]{Bu97}, Duke, Friedlander, Iwaniec \cite[Sect. 4]{DFI02}) use a different 
definition of Maass form.
They define a Maass form of weight $k$ as a function $f^{[M]}$ with at worst polynomial growth at $i\infty$ which satisfies the modified modular invariance condition
\[
	f^{[M]} \left( \frac{az+b}{cz+d} \right)   = \left(\frac{ cz+d}{|cz+d|}\right)^{k} f^{[M]}(z) \quad \text{ for all }\smatr abcd \in \PSL(2,\Z)
\]
and is annihilated by $\Delta_k^{[M]}-\lambda$, where
\[
	\Delta_k^{[M]} := y^2  \left(  \frac{\partial^2}{\partial x^2} + \frac{\partial^2}{\partial y^2} \right)  
- i ky\frac{\partial}{\partial x}.
\]
The two definitions coincide for weight $k=0$ but differ otherwise. 
For $k \ne 0$ Maass forms in this sense are never holomorphic functions of $z$.
Given a Maass form $f$ of weight $k$ in our sense,
one can  transfer results to the definition above
by setting $f^{[M]}(z) := y^{\frac{k}{2}} f(z)$.

We begin by discussing Maass cusp forms of weight $0$ in Section~\ref{sec:MaassCusp}.
We then recall properties of Maass's Eisenstein series as discussed in \cite{Maa83} in Section~\ref{sec:MaassSeries}, and in Section~\ref{sec:NHE} we derive parallel properties for the
non-holomorphic Eisenstein series $E_k(z, s)$.


\subsection{Maass cusp forms}\label{sec:MaassCusp}

A Maass cusp form is a Maass form with rapid decay at the cusp.
In 1949 Maass \cite{Maa49} introduced the weight $0$ case
for certain congruence subgroups of $\SL(2,\ZZ)$, so we begin there.

Suppose that $F$ is a Maass cusp form of weight $0$ with eigenvalue $\lambda$.
Writing $\lambda=s_0(s_0-1)$, such an $F$ has a Fourier series expansion of the form
\[
	F(z) = \sum_{n \in \ZZ \smallsetminus \{ 0\} } a_n \sqrt{y} K_{s_0 - \frac{1}{2}} (2 \pi |n|y) e^{2 \pi i n x},
\]
in which $K_{\nu}(z)$ is a $K$-Bessel function.
This expansion can also be given in terms of Whittaker functions $W_{\kappa,\mu}(y)$ via the relation
\[
	2 \sqrt{|n|y} K_{s-\frac 12}(2\pi |n|y) = W_{0,s-\frac 12}(4\pi |n| y)	
\]
(see Section~\ref{sec:newsec4}).
We let $S_0^{1}(\lambda)$ denote the space of Maass cusp forms of eigenvalue $\lambda$,
consistent with the notation of Theorem~\ref{thm:main1}.

\begin{theorem}\label{th:new31}
The space $S_0^1(\lambda)$ is finite-dimensional for every $\lambda$.
The values of $\lambda$ for which $S_0^1(\lambda)$ is nontrivial satisfy
$\lambda= -(\frac{1}{4} + r^2)$ for some
$r>0$, corresponding to $s_0 = \frac{1}{2} \pm i r$ with $\lambda=s_0(s_0-1)$.
\end{theorem}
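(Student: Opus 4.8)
The plan is to derive the whole statement from the spectral theory of the invariant Laplacian $\Delta_0 = y^2(\partial_{xx}+\partial_{yy})$ acting on $L^2(\PSL(2,\Z)\backslash\H)$ with respect to the hyperbolic measure $d\mu = y^{-2}\,dx\,dy$ and the inner product $\langle f,g\rangle = \int_{\PSL(2,\Z)\backslash\H} f\,\bar g\,d\mu$. On smooth $\Gamma$-invariant functions of rapid decay at the cusp --- in particular on Maass cusp forms, which lie in $L^2$ by rapid decay --- Green's identity gives the two facts I will lean on: first, $-\Delta_0$ is symmetric, $\langle \Delta_0 F, G\rangle = \langle F, \Delta_0 G\rangle$; second, it is non-negative, $\langle -\Delta_0 F, F\rangle = \int y^2\big(|F_x|^2+|F_y|^2\big)\,d\mu \ge 0$. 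In both integrations by parts the boundary contributions cancel by $\Gamma$-invariance along the identified sides of the fundamental domain and vanish at $i\infty$ by rapid decay.

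For finite-dimensionality I would work on the cuspidal subspace $L^2_{\mathrm{cusp}}(\Gamma\backslash\H)$, which is invariant under $\Delta_0$ and its resolvent. The key input is that the resolvent $(\Delta_0-z)^{-1}$ restricted to $L^2_{\mathrm{cusp}}$ is compact: rapid decay of cusp forms renders the non-compactness of the fundamental domain near the cusp harmless, so the relevant automorphic integral operator is Hilbert--Schmidt. Hence $-\Delta_0$ has compact resolvent on $L^2_{\mathrm{cusp}}$, its spectrum there is discrete with finite multiplicities, and $S_0^1(\lambda)$ --- being exactly the $\lambda$-eigenspace inside this subspace --- is finite-dimensional for every $\lambda$. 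I would cite the standard references for the compactness (Maass, Roelcke, or Iwaniec's spectral methods) rather than reprove it.

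To locate the eigenvalues I would argue as follows. Symmetry forces $\lambda\|F\|^2 = \langle\Delta_0 F,F\rangle = \langle F,\Delta_0 F\rangle = \bar\lambda\|F\|^2$, so $\lambda\in\RR$. Non-negativity of the Dirichlet form gives $-\lambda\|F\|^2 = \langle -\Delta_0 F,F\rangle \ge 0$, hence $\lambda\le 0$. The value $\lambda=0$ is excluded because then the Dirichlet integral vanishes, forcing $F_x\equiv F_y\equiv 0$, so $F$ is constant, and nonzero constants are not cusp forms. Writing $s_0=\tfrac12+ir$ so that $\lambda=s_0(s_0-1)=-(\tfrac14+r^2)$, reality of $\lambda$ makes $r^2$ real; to reach the asserted conclusion $r>0$ I must additionally exclude the range $\lambda\in[-\tfrac14,0)$, equivalently real $s_0\in(\tfrac12,1]$ (so $r^2<0$), namely the exceptional eigenvalues together with the threshold $\lambda=-\tfrac14$.

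This exclusion is the main obstacle, and it is genuinely special to the full modular group: the positivity argument alone yields only $\lambda\le 0$ and cannot see the spectral gap. Here I would invoke the known spectral decomposition of $L^2(\PSL(2,\Z)\backslash\H)$: the continuous spectrum, parametrized by the Eisenstein series $E_0(z,\tfrac12+it)$, fills $(-\infty,-\tfrac14]$ in our normalization; the only residual (non-cuspidal $L^2$) eigenfunctions are the constants at $\lambda=0$; and the cuspidal eigenvalues form a discrete set contained in $(-\infty,-\tfrac14)$, with no exceptional eigenvalue in $(-\tfrac14,0)$ and no $L^2$-eigenfunction at the threshold $s_0=\tfrac12$. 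Non-existence of the threshold eigenfunction follows from the Maass--Selberg relation, and the absence of exceptional eigenvalues for $\PSL(2,\Z)$ is classical (Maass, Roelcke); I would cite these rather than reconstruct the Maass--Selberg or trace-formula computations. Combining the pieces gives $\lambda=-(\tfrac14+r^2)$ with $r>0$ real and $s_0=\tfrac12\pm ir$, as claimed.
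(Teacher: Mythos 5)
Your proposal is correct, and its core---reality of $\lambda$ from self-adjointness of $\Delta_0$, the bound $\lambda \le 0$ from positivity of the Dirichlet form, and citation of classical results for $\PSL(2,\Z)$ to exclude the exceptional range---coincides with the paper's proof, which likewise invokes self-adjointness, a positivity property of $-\Delta_0$, and a computation of Maass for the absence of eigenvalues in $(-\frac14, 0]$. Where you genuinely diverge is the finite-dimensionality claim: you obtain it from compactness of the resolvent of $\Delta_0$ restricted to the cuspidal subspace $L^2_{\mathrm{cusp}}(\Gamma\backslash\H)$, so that the cuspidal spectrum is discrete with finite multiplicities; the paper instead quotes the Weyl law $N_{X(1)}(T) = \frac{1}{12}T^2 + O(T\log T)$ for the eigenvalue counting function of $X(1)$ (counted with multiplicity), whose finiteness for each $T$ bounds every individual multiplicity. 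Your route is qualitative and works verbatim for any finite-volume quotient, while the paper's quotes a quantitative counting statement specific to $X(1)$ that is strictly stronger input than the theorem needs; both ultimately rest on cited spectral theory. Two smaller points are in your favor: you dispose of $\lambda = 0$ by the elementary observation that a vanishing Dirichlet integral forces $F$ to be constant (hence zero, as a cusp form), reserving citations only for the genuinely hard exceptional range; and you explicitly address the threshold $\lambda = -\frac14$ (no $L^2$-eigenfunction at $s_0 = \frac12$, via the Maass--Selberg relation), a case that the paper's stated exclusion $-\frac14 < \lambda \le 0$ technically leaves untouched even though the conclusion $r > 0$ requires it.
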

\begin{proof} 
This result  is a basic consequence of Selberg's theory. The condition
that $\lambda$ is real  follows from self-adjointness of the Maass Laplacian $\Delta_0$
on the modular surface. The fact that   $\lambda \leq 0$ follows from a positivity
property for $-\Delta_0$. 
The fact that there are no eigenvalues $-\frac{1}{4}<\lambda\leq0$ follows from a computation of Maass \cite{Maa83}.

The finite multiplicity follows from the a version the Weyl law for eigenvalues for $X(1)=\PSL(2,\Z)\backslash \H$.
Let $N(T)$ count the number of eigenvalues $s_0 = \frac{1}{2} + i r_j$ having $|r_j| \le T$ 
(with multiplicity).
Then we have
\[
	N_{X(1)}(T) = \frac{1}{12} T^2 +O ( T \log T),
\]
see \cite[Sec. 15.5]{IK04}.
\end{proof}

The space $S_0^1(\lambda)$  may be divided into subspaces of even and odd Maass forms:
even forms have Fourier coefficients $a_{-n}= a_{n}$ and odd forms have $a_{-n}= -a_n$. 
The coefficients $a_n$ are in general complex-valued. 
Maass shows that a theory of Hecke operators is applicable to the coefficients $a_n$  and
that  $S_0^1(\lambda)$ has a basis consisting of Hecke eigenforms. 

It is conjectured that the spectrum of cusp forms for $X(1)$  is simple \cite[Conjecture 3]{Sar03}, 
when given in
terms of the value of $s_0 = \frac{1}{2} + ir$ 
(the eigenfunctions occur in
complex conjugate pairs, with $\bar{s}_0= \frac{1}{2} -ir$, so that $\dim S_0^1(\lambda)$ is always even).
 Sarnak attributes  this conjecture to Cartier \cite{Cart71} (who raised it as Problem (B) in \cite[p. 39]{Cart71}).
It is supported by the existing computational evidence.
The following is the best upper bound  on the multiplicity of eigenvalues currently known. 
It is stated in Sarnak \cite[Sect. 4, (36)]{Sar03}, with a proof being
outlined in  \cite{Sar02}.

\begin{theorem}\label{th:new32}
Let $m_{X(1)}(\lambda)=\dim S_0^1(\lambda)$ denote the multiplicity of Maass cusp forms for $\PSL(2,\Z)$ with eigenvalue $\lambda$.
Then
\[
	\limsup_{|\lambda| \to \infty} m_{X(1)}(\lambda) \frac{ \log |\lambda|}{\sqrt{|\lambda|}} \le \frac{ \operatorname{Vol} (X(1))}{4} = \frac{\pi}{12}.
\]
\end{theorem}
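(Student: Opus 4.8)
The plan is to prove this multiplicity bound by feeding a sharply localized test function into the Selberg trace formula on $X(1) = \PSL(2,\Z)\backslash\H$, an approach going back to Hejhal and sharpened by Sarnak. Writing $-\lambda = \tfrac14 + r_0^2$, the spectral parameter is $r_0 = \sqrt{|\lambda|-\tfrac14}\sim\sqrt{|\lambda|}$ as $|\lambda|\to\infty$, so the claim is equivalent to $m_{X(1)}(\lambda)\le(\tfrac{\operatorname{Vol}(X(1))}{4}+o(1))\,r_0/\log r_0$. Let $\{u_j\}$ be an orthonormal basis of Maass cusp forms with spectral parameters $r_j\ge 0$. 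First I would fix an even test function $h\ge 0$ on $\RR$ with $h(r_0)\ge 1$ whose Fourier transform $g(u)=\tfrac1{2\pi}\int_\RR h(r)e^{-iru}\,dr$ is supported in an interval $[-R,R]$, where $R\asymp\log|\lambda|$ is a cutoff chosen later. By nonnegativity the multiplicity is controlled spectrally:
\[
	m_{X(1)}(\lambda) \le m_{X(1)}(\lambda)\,h(r_0) \le \sum_j h(r_j),
\]
so it suffices to bound the full spectral sum from above.

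Next I would insert $h$ into the trace formula, handling the continuous spectrum of the noncompact quotient through the scattering determinant $\phi(s)=\Lambda(2s-1)/\Lambda(2s)$. The identity term gives the main contribution
\[
	\frac{\operatorname{Vol}(X(1))}{4\pi}\int_{-\infty}^{\infty} h(r)\,r\tanh(\pi r)\,dr,
\]
which, since $h$ is concentrated at $\pm r_0$ on the scale $1/R$ and $r\tanh(\pi r)\approx r_0$ there, is of size $\asymp \operatorname{Vol}(X(1))\,r_0/R$. The decisive structural point is that the support condition $\operatorname{supp} g\subseteq[-R,R]$ renders all remaining geometric terms small: the elliptic terms (from the order $2$ and $3$ torsion of $\PSL(2,\Z)$) are $O(1)$; the hyperbolic terms only see closed geodesics of length $\le R$, and since each primitive geodesic of length $\ell$ carries the exponentially small weight $\sim\ell\,e^{-\ell/2}$ while the prime geodesic theorem gives $\sim e^\ell/\ell$ of them up to length $\ell$, their total is $O(e^{R/2})$; and the parabolic together with the continuous-spectrum integral $\tfrac1{4\pi}\int h(r)\tfrac{-\phi'}{\phi}(\tfrac12+ir)\,dr$ contribute only $O(R\log R)$, because $\tfrac{-\phi'}{\phi}(\tfrac12+ir)$ grows logarithmically in $r$. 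This is precisely the finite propagation speed phenomenon: for distances below the diameter of the support of $g$ the wave trace on $X(1)$ matches that on $\H$, and the absence of short geodesics is what powers the logarithmic saving.

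Finally I would balance the cutoff against the errors. Taking $R=c\log|\lambda|$ keeps the hyperbolic error $O(|\lambda|^{c/2})$ below $\sqrt{|\lambda|}/\log|\lambda|$ whenever $c<1$, while the main term approaches $\operatorname{Vol}(X(1))\,r_0/R$; letting $c\to 1^-$ and $|\lambda|\to\infty$ then drives the bound to the stated shape. The hard part — and the step that distinguishes the exact constant $\tfrac14$ from a mere order of magnitude — is the extremal problem of minimizing $\int_\RR h(r)\,r\tanh(\pi r)\,dr$ over even nonnegative band-limited $h$ normalized by $h(r_0)\ge1$, together with the uniform control of every error term as the optimization is pushed to the edge $c\to1$. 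The naive Fejér (squared sinc) kernel already gives the correct order $r_0/\log|\lambda|$, but obtaining the sharp value $\tfrac{\operatorname{Vol}(X(1))}{4}=\tfrac{\pi}{12}$ requires the optimal Beurling–Selberg type majorant and a careful accounting of the subleading geometric and scattering contributions; I expect this constant-chasing, rather than the overall strategy, to be the main obstacle.
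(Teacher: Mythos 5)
You should first be aware that the paper itself contains \emph{no} proof of Theorem~\ref{th:new32}: it is quoted from Sarnak \cite[Sect.~4, (36)]{Sar03}, with the proof only outlined in the letter \cite{Sar02}, so your proposal can only be measured against that cited argument. In outline you do follow the same route --- positivity $m_{X(1)}(\lambda)h(r_0)\le\sum_j h(r_j)$ for an even, nonnegative, band-limited test function, the identity term of the Selberg trace formula as main term, the prime geodesic theorem to control the hyperbolic terms once $g$ is supported in $[-R,R]$, negligibility of the elliptic, parabolic and scattering contributions, and the cutoff $R\asymp\log|\lambda|$ --- and those steps are essentially fine (your $O(R\log R)$ for the scattering integral is generous but sufficient; also the squared-sinc kernel decays only like $|t|^{-2}$, the borderline for convergence of $\int h(r)\,r\tanh(\pi r)\,dr$, so strictly it must be regularized).

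The genuine gap is the constant, which is the entire content of the theorem beyond an order-of-magnitude bound, and it is not deferred ``constant-chasing'' that your framework can absorb: as you have set it up, the method provably cannot reach $\operatorname{Vol}(X(1))/4$. Run your own bookkeeping to the end: the trivial (absolute-value) bound on the hyperbolic sum forces $R\le(1-o(1))\log|\lambda|$; the main term for test functions of your shape is $\tfrac{\operatorname{Vol}}{4\pi}\,r_0(1+o(1))\int h$; and every admissible $h$ (even, nonnegative, $h(\pm r_0)\ge 1$, Fourier transform supported in $[-R,R]$) satisfies $\int h\ge\tfrac{4\pi}{R}\bigl(1-o(1)\bigr)$. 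Indeed, by the Fej\'er--Riesz/Krein factorization $h=|f|^2$ with $f$ of exponential type $R/2$, and Cauchy--Schwarz applied to $|f(r_0)|+|f(-r_0)|\ge 2$ gives exactly this bound, with the two-bump Fej\'er kernel asymptotically extremal; putting mass elsewhere only increases the identity term. Hence the floor of your method is $m_{X(1)}(\lambda)\le\operatorname{Vol}(X(1))\,\sqrt{|\lambda|}/\log|\lambda|\,(1+o(1))$, i.e.\ the constant $\pi/3$ --- a factor of $4$ larger than the claim. In particular your closing suggestion, that the sharp value will come from ``the optimal Beurling--Selberg type majorant,'' is a dead end: the Fej\'er kernel already solves that extremal problem, so no choice of majorant supplies the missing factor. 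The saving has to come from the one step where you took absolute values, the hyperbolic sum, whose terms carry the oscillating factor $\cos(r_0\ell)$ against the length spectrum of $X(1)$; extracting that cancellation (via the explicit formula/Selberg zeta function together with arithmetic information about the lengths) is the actual substance of the argument outlined in \cite{Sar02}, and nothing in your proposal engages with it. A smaller, related slip: since $\log|\lambda|=2\log r_0\,(1+o(1))$, bounds normalized by $r_0/\log r_0$ and by $\sqrt{|\lambda|}/\log|\lambda|$ differ by a factor of $2$, and ``drives the bound to the stated shape'' glosses over precisely the kind of factor you are being asked to pin down.
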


There has been much work done on computing eigenvalues of Maass cusp forms.
The first few eigenvalues for the Dirichlet problem for Maass cusp forms for $\PSL(2,\ZZ)$ 
(``odd eigenvalues'') given in terms of $s =\frac{1}{2} + ir$ with associated eigenvalue
$\lambda= \frac{1}{4} + r^2$ were computed numerically
by Cartier \cite{Cart71} (actually by P.  Mignot, U. of Stasbourg) as
$r_1=9.47, r_2=12.13, r_4=14.34,...$, along with  unreliable computations for  ``even'' eigenvalues
including some spurious eigenvalues, later noted to be associated with some zeta and $L$-function
zeros, as explained in Hejhal \cite{Hej81}.
The first few ``odd'' and ``even'' eigenvalues  were computed reliably 
by Hejhal \cite[Table IV]{Hej81}, see also   \cite[p. 513]{Hej83}. They  are $r_1= 9.533695, r_2=12.17301, r_3=13.77975, r_4=14.35851$,
whence $\lambda = -\frac{1}{4}-r^2$ has values $\lambda_1 = -91.14..., \lambda_2 = -148.43 ..., \lambda_3 = -190.13...$
and $\lambda_4 = -206.16...$, see Sarnak \cite[p.444]{Sar03}.  
Here $r_3$ is an ``even'' eigenvalue. 
Further computations appear in Hejhal \cite{Hej91},
reprinted in \cite[pp. 125--165]{Hej92} and Steil \cite{Steil1994}.

It is known that  the spectral multiplicity of Maass waveforms on 
the congruence subgroup $\Gamma_0(N)$ for non-squarefree level $N$
can have multiplicity greater than one (Str\"{o}mberg \cite{Stro12}, Humphries \cite{Hump15}).
The prevailing belief is that the eigenvalue multiplicity should be bounded above on each $\Gamma_0(N)$.

A converse theorem for Maass forms on the full modular group was proved by Raghunathan \cite{Rag10}.
The Riemann hypothesis for $L$-functions of Maass wave forms for $\PSL(2, \ZZ)$ was tested
numerically for low zeros for a few small Maass parameters $s_0 = \frac{1}{2} \pm ir$ 
by Str\"{o}mbergsson \cite{Strn99}. A method of computing  Fourier coefficients of Hecke-equivariant Maass cusp forms 
was given by Stark \cite{Sta84}.


\subsection{Maass's Eisenstein series}\label{sec:MaassSeries}

For later reference 
we record results and calculations of Maass
\cite[Chapter IV]{Maa83} concerned with the Eisenstein series 
\begin{equation}\label{eqn:maassEisensteinDef}
G(z, \ol{z}; \alpha, \beta) := \sum_{(m,n) \ne (0,0)} (mz+n)^{-\alpha} (m\bar{z} + n)^{-\beta}
= \sum_{(m,n) \ne (0,0)} (mz+n)^{-(\alpha-\beta)}|m{z} + n|^{-2\beta} ,
\end{equation}
in which $\im{z}>0$,  $\alpha - \beta\in 2\Z$, $\re{\alpha+\beta} > 2$,  and the 
sum runs over all pairs of integers not equal to $(0, 0)$. 
We define the function by the second summation, taking the principal
branch of the logarithm, noting that $|mz+n| = |m\bar{z}+n| >0$ under the hypotheses.%
\footnote{Maass wishes to view the variables $z$ and $\bar{z}$ as independent, and requires
a convention on 
the branch of the logarithm defining $(mz +n)^{-\alpha} = e^{-\alpha  \log(mz+n)}$, requiring
a compatibility condition to hold between $mz+n$ and $m\bar{z}+n$.}
Maass introduces the notation $q := \alpha + \beta$, $r := \alpha - \beta \in 2\ZZ$;
in terms of our  variables we have $q=2s+k, r=k$.\footnote{On page
209 Maass defines a parameter $k$
by $r=2k \in 2\ZZ$.  This notation conflicts with our notation; his $k$ equals our $k/2$. 
In  the statement of Theorem \ref{thm:maassFourierExpansion}
 we  replace the Maass parameter $k$ by $\frac{r}{2}$ wherever it occurs.}

\begin{theorem}[Maass \cite{Maa83} page 210]\label{thm:maassFourierExpansion}
{\rm (Fourier expansion for $G(z, \ol{z}; \alpha, \beta)$)}
For fixed  $\alpha - \beta \in 2\Z$, 
the  Fourier expansion  of the function $G(z, \ol{z}; \alpha, \beta)$
with respect to periodicity under $x \mapsto x+1$ is:
\[
G(z, \ol{z}; \alpha, \beta) = \varphi_{\frac{r}{2}}(y, q) + 2(-1)^{\frac{r}{2}} (\sqrt{2} \pi)^q 
\sum_{n \in \ZZ-\{ 0\}} \frac{\sigma_{q-1}(n)}{\Gamma \left(  \frac{q}{2} + \sgn{n} \frac r2 \right) } W(2\pi n y; \alpha, \beta) e^{2\pi i n x}.
\]
In this  formula the constant term is
\[
\varphi_{\frac{r}{2}}(y, q) := 2 \zeta(q) + (-1)^{\frac{r}{2}} (4 \pi)2^{1-q}  \frac{\Gamma( q-1)}{\Gamma \left(  \frac{q+r}{2}  \right)  \Gamma \left(  \frac{q-r}{2}  \right)  } \zeta(q-1) y^{1-q}.
\]
This formula also  contains the $s$-th power divisor function $\sigma_{s}(n) := \sum\limits_{d\mid n, d>0} d^{s}$,
for  $s \in \CC$, and
the modified Whittaker function, defined  for $y>0$ by 
\[
	W(\pm y; \alpha, \beta) := y^{-\frac{q}{2}} W_{\pm \frac{r}{2}, \frac{1}{2} (q-1)} (2y),
\]
where $W_{\kappa, \mu}(y)$ is the $W$-Whittaker function (see Section~\ref{sec:newsec4}).
\end{theorem}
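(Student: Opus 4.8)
The plan is to compute the Fourier expansion directly from the definition, splitting the lattice sum according to whether $m=0$ and then applying Poisson summation in $n$ for each fixed $m\neq 0$. First I would isolate the $m=0$ terms: here $mz+n=n$ is real, so with the chosen branch convention the product $(mz+n)^{-\alpha}(m\bar z+n)^{-\beta}$ collapses to $n^{-q}$, and $\sum_{n\neq 0}n^{-q}$ contributes the constant $2\zeta(q)$, which is the first summand of $\varphi_{r/2}(y,q)$.

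For $m\neq 0$ I would write $mz+n=(mx+n)+imy$ and $m\bar z+n=(mx+n)-imy$ and apply Poisson summation to the sum over $n\in\ZZ$. This replaces $\sum_n$ by $\sum_{k\in\ZZ}e^{2\pi i kmx}\,I_k(my)$, where
\[
	I_k(Y):=\int_{-\infty}^{\infty}(u+iY)^{-\alpha}(u-iY)^{-\beta}e^{-2\pi iku}\,du .
\]
Collecting the exponentials $e^{2\pi ikmx}$ by their frequency $N=km$ turns the double sum into an honest Fourier series in $x$. The terms with $k=0$ are constant in $x$ and, once summed over $m$, must reproduce the second, $y^{1-q}$, summand of $\varphi_{r/2}(y,q)$; the terms with $k\neq 0$ give the non-constant Fourier coefficients.

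The crux is the evaluation of $I_k(Y)$. Scaling $u=Yv$ gives $I_k(Y)=Y^{1-q}J(kY)$ with $J(\xi)=\int(v+i)^{-\alpha}(v-i)^{-\beta}e^{-2\pi i\xi v}\,dv$. For $k=0$ this is a Beta-type integral, proportional to $\Gamma(q-1)/\bigl(\Gamma(\tfrac{q+r}{2})\Gamma(\tfrac{q-r}{2})\bigr)$; after the factor $Y^{1-q}=(my)^{1-q}$ and summation over $m$ it yields exactly the $\zeta(q-1)y^{1-q}$ term with the stated constant. For $k\neq 0$ the integral $J(\xi)$ is the classical integral representation of a $W$-Whittaker function: up to a $\Gamma$-ratio constant it equals $|\xi|^{\frac q2-1}W_{\pm r/2,\,(q-1)/2}(4\pi|\xi|)$, with the sign of the first index governed by $\sgn{k}$. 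Since $4\pi|k|Y=4\pi|N|y$, this is precisely the modified Whittaker function $W(\pm y;\alpha,\beta)$ of the statement. Substituting back and tracking the powers of $m$ and $k$, the dependence on the individual divisor collapses to $|k|^{q-1}$, so that reorganizing the sum over the positive divisors $m\mid N$ produces $\sum_{d\mid N}d^{q-1}=\sigma_{q-1}(N)$.

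The main obstacle is bookkeeping rather than conceptual difficulty: one must track the $\Gamma$-factors and the powers of $\pi$ and $2$ coming out of the Whittaker integral and, most delicately, keep the branch conventions for $(mz+n)^{-\alpha}$ and $(m\bar z+n)^{-\beta}$ consistent across the $m>0$ and $m<0$ sublattices. The split by $\sgn{m}$ is exactly what produces the factor $(-1)^{r/2}$, the $\sgn{n}$ inside $\Gamma\bigl(\tfrac q2+\sgn{n}\tfrac r2\bigr)$, and the appearance of $W(-y;\alpha,\beta)$ from the $m<0$ terms. Matching these signs together with the normalizing constant $2(-1)^{r/2}(\sqrt2\,\pi)^q$ is the step requiring the most care; since the result is due to Maass, I would cross-check the final constants against his computation in \cite[Chapter IV]{Maa83}.
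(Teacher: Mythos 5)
Your proposal is correct in outline, but it takes a genuinely different route from the paper, because the paper does not derive this expansion at all: its proof consists of citing the formula from Maass \cite[p.~210]{Maa83} and performing a single notational translation. Maass writes the constant term using his auxiliary function $u(y,q)=\frac{y^{1-q}-1}{1-q}$, and the paper's only computation is the observation that $(1-q)u(y,q)+1=y^{1-q}$, together with the dictionary $q=\alpha+\beta$, $r=\alpha-\beta$ (and Maass's parameter ``$k$'' being the paper's $r/2$). What you propose instead is the standard direct derivation---split off $m=0$ to get $2\zeta(q)$, apply Poisson summation in $n$ for each $m\neq 0$, evaluate the zero frequency by a Beta-type integral to produce the $\frac{\Gamma(q-1)}{\Gamma(\frac{q+r}{2})\Gamma(\frac{q-r}{2})}\zeta(q-1)\,y^{1-q}$ term, identify the nonzero-frequency integrals as Whittaker $W$-functions, and collect frequencies $N=km$ into the divisor sum $\sigma_{q-1}(N)$---which is in essence Maass's own computation reconstructed from scratch. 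Your route buys self-containedness: it explains where each ingredient (the divisor sum, the $\sgn{n}$ inside the Gamma factor, the two constant-term pieces) comes from. One simplification you can exploit is that since $r\in 2\ZZ$ the summand is honestly $(mz+n)^{-r}\,|mz+n|^{-2\beta}$ with an integer power $-r$, so the branch bookkeeping you flag as delicate is lighter than it appears. What the paper's route buys is precisely what your final paragraph concedes: the normalizing constants ($2(-1)^{r/2}(\sqrt{2}\,\pi)^q$ and the scaling $W(\pm y;\alpha,\beta)=y^{-q/2}W_{\pm r/2,\frac12(q-1)}(2y)$) are the error-prone part of the computation, and by quoting Maass the paper outsources that verification instead of redoing it. Since you yourself end by deferring the constant-matching to a cross-check against \cite{Maa83}, both arguments ultimately rest on the same source; yours is a genuine derivation modulo that bookkeeping, while the paper's is a citation plus the $u(y,q)\mapsto y^{1-q}$ conversion.
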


\begin{proof}
The main formula appears in \cite[p.210]{Maa83},
with constant term written 
$$\varphi_{\frac{r}{2}}(y, q) := 2 \zeta(q) + (-1)^{\frac{r}{2}} 2^{3-q} \pi \frac{\Gamma( q-1) \zeta(q-1)}{\Gamma \left(  \frac{q+r}{2}  \right)  \Gamma \left(  \frac{q-r}{2}  \right)  }\big((1-q)u(y, q) +1\big).$$
We use $u(y,q) = \frac{y^{1-q}-1}{1- q}$ if $q \ne 1$
(defined on \cite[p. 181]{Maa83} ) whence  
$(1-q)u(y,q) +1 = y^{1-q}.$
\end{proof}

This Eisenstein series  was initially defined for $\re{\alpha + \beta}>2$ 
but the Fourier expansion  yields an analytic continuation of $G(z, \ol{z}; \alpha, \beta)$
in $q=\alpha+ \beta$  to all $q \in \CC$,
using the fact that $W_{\kappa, \mu}(y)$ is an entire function of $\mu$ for fixed $\kappa, y$,
and has rapid decay as $y \to \infty$.

Maass \cite[p. 213]{Maa83} introduced 
completed
versions of 
these Eisenstein series, setting
 \begin{equation}\label{Mdouble}
G^*(z, \ol{z}; \alpha, \beta) := \mfrac{q}{2} \left(  1- \mfrac{q}{2} \right)  \pi^{-\frac{q}{2}} \Gamma \left(  \mfrac{q}{2} + \mfrac{|r|}{2} \right)  G(z, \ol{z};\alpha, \beta)
\end{equation}
and defining its 
completed
Fourier series  constant term by
\begin{equation}
\varphi_{\frac{r}{2}}^*(y, q) :=  \mfrac{q}{2} \left(  1- \mfrac{q}{2} \right)  \pi^{-\frac{q}{2}} \Gamma \left(  \mfrac{q}{2} + \mfrac{|r|}{2} \right)   \varphi_{\frac{r}{2}}(y,q).
\end{equation}
These functions have the following properties.

\begin{theorem}\label{thm:maassProperties}
{\rm (Properties of $G(z, \ol{z}; \alpha, \beta)$)}
For fixed $r= \alpha-\beta \in 2\ZZ$, the functions $G(z, \ol{z}; \alpha, \beta)$ 
and their completions have the following
properties.  
\begin{enumerate}[\textup(1\textup)]
\item
{\em (Analyticity in $q$-variable)}
The completed function 
$G^*(z, \ol{z}; \alpha, \beta)$ and its completed constant term $\varphi_{r/2}^*(y, q)$ 
both analytically continue  to  entire functions of  the variable  $q = \alpha + \beta$. 
\item 
{\rm (Laplacian Eigenfunction)} 
In the variables
$z=x+iy$, $\bar{z}=x-iy$ it is
a solution of the (elliptic) partial differential equation
$$\Omega_{\alpha,\beta} G(z, \ol{z}; \alpha, \beta) = 0$$
where 
\begin{equation}
\Omega_{\alpha, \beta}= -y^2  \left(  \frac{\partial^2}{\partial x^2}+  \frac{\partial^2}{\partial y^2} \right)  
 +i(\alpha- \beta) y \frac{\partial}{\partial x} - (\alpha +\beta ) y \frac{\partial}{\partial y}.
\end{equation}
The function $G(z, \ol{z}; \alpha, \beta)$ is real-analytic in the variables $x, y$ on $\HH$.
These properties also hold for $G^{\ast}(\ol{z}; \alpha, \beta).$
\item
 {\rm ($\{\alpha, \beta\}$-Modular Invariance)}
For each $\gamma = \smatr{a}{b}{c}{d} \in \SL(2,\Z)$ we have
\begin{equation}
G\left( \frac{az+b}{cz+d}, \frac{a\bar{z}+b}{c \bar{z}+ d}; \alpha, \beta\right)=  (cz+d)^{\alpha- \beta}|cz+d|^{2\beta}
G(z, \ol{z}; \alpha, \beta). 
\end{equation}
\item {\em (Moderate Growth)} For some constant $K>0$, $G(z, \ol{z}; \alpha, \beta) = O(y^K)$ for $y\to \infty$, uniformly in $x$. 
\item
{\em (Functional Equations)} $G^*(z, \ol{z}; \alpha, \beta)$ satisfies the following: 
$$G^*(z, \ol{z}; \ol{\beta}, \alpha) = G^*(-\ol{z}, -z; \alpha, \beta)$$
and 
$$G^*(z, \ol{z}; 1-\alpha, 1-\beta) = y^{q-1} G^*(z, \ol{z}; \beta, \alpha).$$
\end{enumerate}
\end{theorem}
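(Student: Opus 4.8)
The plan is to extract all five properties from the two representations of $G(z,\ol z;\alpha,\beta)$ already in hand: the defining lattice sum \eqref{eqn:maassEisensteinDef} and the explicit Fourier expansion of Theorem~\ref{thm:maassFourierExpansion}. Properties (3), (2) and (4) are direct consequences and I would settle them first, reserving the real analytic content for (1) and (5). For modular invariance (3) I work in the half-plane $\re{\alpha+\beta}>2$ of absolute convergence and substitute $z\mapsto\gamma z$ with $\gamma=\smatr abcd\in\SL(2,\Z)$. Writing $(m',n')=(m,n)\gamma$, one has $m\gamma z+n=(m'z+n')/(cz+d)$ and $m\overline{\gamma z}+n=(m'\ol z+n')/(c\ol z+d)$, so each summand acquires the factor $(cz+d)^{\alpha}(c\ol z+d)^{\beta}$; since $(m,n)\mapsto(m',n')$ permutes $\Z^2\smallsetminus\{(0,0)\}$, resumming produces the stated automorphy factor $(cz+d)^{\alpha-\beta}|cz+d|^{2\beta}$. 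The only subtlety is the branch of the logarithm, fixed by the compatibility convention recorded after \eqref{eqn:maassEisensteinDef}, and the identity then propagates to all $q$ by analytic continuation.

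For the eigenfunction property (2) I rewrite $\Omega_{\alpha,\beta}$ in the variables $z,\ol z$, where $\partial_x^2+\partial_y^2=4\,\partial_z\partial_{\ol z}$, obtaining $\Omega_{\alpha,\beta}=-4y^2\partial_z\partial_{\ol z}-2i\beta y\,\partial_z+2i\alpha y\,\partial_{\ol z}$, and apply it termwise to $u^{-\alpha}v^{-\beta}$ with $u=mz+n$ and $v=m\ol z+n$. Using $\partial_z u=\partial_{\ol z}v=m$ and $\partial_{\ol z}u=\partial_z v=0$, the result factors as $\alpha\beta m\,u^{-\alpha-1}v^{-\beta-1}\bigl(-4my^2+2iy(v-u)\bigr)$, which vanishes because $v-u=-2imy$. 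Termwise differentiation is justified by local uniform convergence in the region of absolute convergence, the equation extends to all $q$ by continuation, and since $G^*$ differs from $G$ by a factor independent of $z$ it satisfies the same equation. Moderate growth (4) then reads off the Fourier expansion: the constant term $\varphi_{r/2}(y,q)$ equals $2\zeta(q)$ plus a constant times $y^{1-q}$, hence grows at most polynomially in $y$, while every nonzero frequency contributes a multiple of the Whittaker value $W(2\pi n y;\alpha,\beta)$, which decays rapidly as $y\to\infty$; summing gives $G=O(y^K)$.

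The substantive point is analyticity (1). Here the non-constant Fourier coefficients are manifestly entire in $q$: each is a product of $\sigma_{q-1}(n)$ (a finite Dirichlet polynomial), the entire factor $1/\Gamma(\tfrac q2+\sgn n\,\tfrac r2)$, and the Whittaker function $W_{\pm r/2,(q-1)/2}$, which is entire in its second index. Thus the only possible poles sit in the constant term, and one must check they are removed after multiplying by the completion factor $\tfrac q2(1-\tfrac q2)\pi^{-q/2}\Gamma(\tfrac q2+\tfrac{|r|}{2})$. The mechanism is a combination of mutual cancellation inside $\varphi_{r/2}$—the residues conspire so that the pole of $2\zeta(q)$ at $q=1$ cancels the pole carried by $\Gamma(q-1)$ in the second piece (where $y^{1-q}=1$)—together with the factor $\tfrac q2(1-\tfrac q2)$, which is symmetric under $q\mapsto 2-q$ and vanishes at $q=0,2$, thereby absorbing the Eisenstein pole of $\zeta(q-1)$ at $q=2$ and its reflected partner. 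This yields the entirety of both $\varphi^*_{r/2}$ and $G^*$.

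The main obstacle is the functional equations (5). I expect to verify them on the completed Fourier expansion using three ingredients: the reflection symmetry $W_{\kappa,\mu}=W_{\kappa,-\mu}$ of the Whittaker function, the Hecke identity $\sigma_{q-1}(n)=n^{q-1}\sigma_{1-q}(n)$ (which supplies the $q\leftrightarrow 2-q$ symmetry), and the Legendre duplication and reflection formulas for $\Gamma$, to show that the two completed expansions agree coefficient by coefficient. The second equation $G^*(z,\ol z;1-\alpha,1-\beta)=y^{q-1}G^*(z,\ol z;\beta,\alpha)$ is the $q\mapsto 2-q$ functional equation and should fall out cleanly this way. The first equation is the delicate one: the involution $z\mapsto-\ol z$ sends $x\mapsto-x$, swapping the frequencies $n\leftrightarrow-n$ and hence the two $\Gamma$-factors indexed by $\sgn n$, while the parameter change $(\alpha,\beta)\mapsto(\ol\beta,\alpha)$ introduces a complex conjugation that must be reconciled with the chosen branch of the logarithm. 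Tracking these branches consistently is the crux, and it is precisely here that I would lean on Maass's original derivation \cite[Chap. IV]{Maa83} rather than re-expand everything by hand.
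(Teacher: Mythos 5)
Your proposal takes a genuinely different route from the paper: the paper's proof of this theorem is essentially a citation, attributing (1), (3), (4), (5) to Maass \cite[pp.~213--214]{Maa83} and (2) to \cite[pp.~175--177]{Maa83}, whereas you rederive the bulk of it directly from the lattice sum and the Fourier expansion of Theorem~\ref{thm:maassFourierExpansion}. Your arguments for (3) (re-indexing by $(m',n')=(m,n)\gamma$; the branch issue in fact disappears if one works with the second form of \eqref{eqn:maassEisensteinDef}, since $(mz+n)^{-(\alpha-\beta)}$ is an integer power), for (2) (the rewriting $\Omega_{\alpha,\beta}=-4y^2\partial_z\partial_{\ol z}-2i\beta y\,\partial_z+2i\alpha y\,\partial_{\ol z}$ and the identity $v-u=-2imy$ are both correct), and for (4) are sound, and they make the statement self-contained where the paper is not. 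Deferring the first functional equation of (5) to Maass costs nothing relative to the paper, which defers everything.

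The genuine gap is in your proof of (1): the cancellations you list are correct but not exhaustive, so the conclusion ``this yields the entirety'' does not follow as written. You account for the pole of $2\zeta(q)$ at $q=1$ against that of $\Gamma(q-1)$ (indeed the residues are $2$ and $-2$, using $\Gamma(\frac{1+r}{2})\Gamma(\frac{1-r}{2})=(-1)^{r/2}\pi$), and for poles at $q=0,2$ killed by the factor $\frac q2(1-\frac q2)$. But $\Gamma(q-1)$ also has poles at every negative integer $q=-1,-2,-3,\dots$, and --- more seriously for your logic --- multiplying by the completion factor $\Gamma(\frac q2+\frac{|r|}{2})$ can itself create poles at $q=-|r|-2j$, $j\geq 0$, in the constant term and in the nonconstant terms alike; so it is not true that ``the only possible poles sit in the constant term'' once the completion is in place. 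All of these singularities do cancel, but by mechanisms you never invoke: the collapse $\Gamma(\frac q2+\frac{|r|}{2})\big/\big(\Gamma(\frac{q+r}{2})\Gamma(\frac{q-r}{2})\big)=1\big/\Gamma(\frac q2-\frac{|r|}{2})$, which is entire and whose zeros kill the even nonpositive poles of $\Gamma(q-1)$; the trivial zeros of $\zeta(q-1)$, which kill the odd ones $q=-1,-3,\dots$; the trivial zeros of $\zeta(q)$, which kill the completion-factor poles against the term $2\zeta(q)$; and, for the nonconstant terms, the fact that $\Gamma(\frac q2+\frac{|r|}{2})\big/\Gamma(\frac q2+\sgn{n}\frac r2)$ is either $1$ or a polynomial in $q$. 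A complete proof of (1) must run through this bookkeeping. A smaller omission: the real-analyticity assertion in (2) is never addressed --- locally uniform convergence of real-analytic functions does not imply real-analyticity of the limit, so you need elliptic regularity for $\Omega_{\alpha,\beta}$ (or, again, the citation to Maass).
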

\begin{proof}
Results (1), (3), (4), and (5) are given on \cite[pp. 213--214]{Maa83} (in (5) we have corrected a typo in the first equation). 
In particular (3) and (4) are  obtained by unwinding the definition of 
$[\SL(2,\Z), \alpha, \beta, 1]$, the space of automorphic functions of type $\{ \SL(2,\Z), \alpha, \beta, 1\}$ on \cite[p. 185]{Maa83}. 
The differential operator  in (2) is defined on \cite[pp. 175--176]{Maa83},
and that   (2) holds  for $G(z, \ol{z}; \alpha, \beta)$
appears on \cite[pp. 177, equation (14) and preceding equation]{Maa83}.
The result  (2)  then holds for $G^{\ast}(z, \ol{z}; \alpha, \beta)$ because the prefactor
is a constant with respect to the differential equation and for real-analyticity.
\end{proof}



\subsection{Properties of  non-holomorphic Eisenstein series}\label{sec:NHE}

 Recall that the weight $k$ non-holomorphic Eistenstein series $E_k(z, s)$ is defined as
\begin{equation}\label{Eis}
E_{k}(z, s) :=\mfrac{1}{2} \sum_{(m,n) \in \ZZ^2 \backslash  (0,0)} \frac{y^s}{|mz+n|^{2s} (mz+n)^{k}}.
\end{equation}
This series converges absolutely for  $\re s  > 1-\frac k2$. The resulting function 
transforms under  elements of $\SL(2,\ZZ)$ as 
\[
	E_{k}\left(\frac{az+b}{cz+d}, s\right) =  (cz+d)^{k} E_k(z, s).
\]
At $s=0$ for  even weights $k \ge 4$
this function specializes to  an
unnormalized version of the  holomorphic Eisenstein series $E_k(z)$,  with
\[
	E_{k}(z, 0) = \mfrac{1}{2} \sum_{(m,n) \in \ZZ^2 \backslash \{0\}} \frac{1}{(mz+n)^k}
	= \frac{1}{2} \zeta(k) E_k(z).
\]
With our scaling $E_{k}(z, 0) = \frac{1}{2}G_k(z)$,
where $G_k(z)$ is the usual unnormalized holomorphic Eisenstein series  in 
Serre \cite{Se73}.


The non-holomorphic Eisenstein series  $E_k(z, s)$ has a simple relation to Maass's
Eisenstein series
 $G(z, \bar{z}; \alpha, \beta)$. 
Take $\alpha= s+k$ and $ \beta=s$ with $k \in 2\ZZ$. 
Then in  the absolute convergence region $\re{s} >1-\frac k2$ we  have the identity 
\begin{equation}\label{E-to-G}
E_{k}(z, s) =\mfrac{1}{2} y^{s} G(z, \ol{z}; s+k, s),
\end{equation}
which then  holds   for  all $s \in \CC$    under  analytic continuation.


One can  use  Maass's  result for $G(z, \ol{z}; \alpha, \beta)$ to obtain Fourier expansions for 
the {\em completed non-holomorphic Eisenstein series} $\htE_k(z, s)$ defined by 
\begin{equation}\label{completed}
\htE_k(z, s) := \pi^{-s-\frac{k}{2}}\Gamma\left(s+ \mfrac{k}{2} + \mfrac{|k|}{2}\right) E_k(z, s),
\end{equation}
as follows.
We denote by $\htz(s)$ the completed Riemann zeta function
\[
	\htz(s) := \pi^{-s/2} \Gamma(s/2) \zeta(s).
\]

\begin{proposition}\label{prop:FourierExpansionArbitrary}
{\rm (Fourier expansion of $\htE(z,s)$)\footnote{{The corresponding Proposition 3.5 of \cite[p. 304]{LR15K} has misprints. In the nonconstant
terms with $e^{2\pi i n x}$  the multiplicative factor $(\sqrt{2 }\pi)^{2s+k} \pi^{-s-\frac{k}{2}}$ should read $(2\pi)^{\frac{k}{2}} |n|^{-s}$
as in Proposition \ref{prop:FourierExpansionArbitrary} here.}}}
For $k\in 2\ZZ$,
the completed non-holomorphic Eisenstein series 
$\htE(z, s)$
has the Fourier expansion 
\begin{multline*}
\htE_k(z,s) = C_{0}(y,s) 
 + (-1)^{\frac{k}{2}} \Gamma \left( s+\mfrac{k}{2}+\mfrac{|k|}{2} \right)  y^{-\frac k2}   \\
\times \sum_{n\in \Z \backslash \{0\}} 
\frac{|n|^{-s-\frac k2}\sigma_{2s+k-1}(n)}{\Gamma \left(  s+\frac{k}{2}(1 + \sgn{n})  \right)  } 
 W_{\sgn{n} \frac k2 , s+\frac{k-1}{2} } (4\pi|n|y)e^{2\pi i n x},
\end{multline*}
in which the Fourier constant term is
\begin{multline*}
C_0(y,s) = \\ \frac{\Gamma \left( s+\frac{k}{2} + \frac{\abs{k}}{2} \right) }{\Gamma \left( s+\frac{k}{2} \right) }  \htz(2s+k)\, y^s
+  (-1)^{\frac{k}{2}} \frac{\Gamma \left(  s+ \frac{k}{2} \right)  \Gamma  \left(  s+ \frac{k}{2} + \frac{\abs{k}}{2} \right) }{\Gamma( s+ k) \Gamma(s) } \htz(2-2s-k)\, y^{1-s-k}
\end{multline*}
 and $\sigma_s(n) = \sum\limits_{d |n, d>0} d^{s}.$
\end{proposition}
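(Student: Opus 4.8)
The plan is to derive the expansion directly from Maass's formula (Theorem~\ref{thm:maassFourierExpansion}) by specializing the parameters and then inserting the completion factor. Setting $\alpha = s+k$ and $\beta = s$, so that $q = \alpha+\beta = 2s+k$ and $r = \alpha-\beta = k$, the relation \eqref{E-to-G} together with the definition \eqref{completed} gives
$$\htE_k(z,s) = \tfrac{1}{2}\,\pi^{-s-\frac{k}{2}}\Gamma\left(s+\tfrac{k}{2}+\tfrac{|k|}{2}\right) y^s\, G(z,\ol{z}; s+k, s).$$
Thus the whole computation reduces to substituting $q=2s+k$ and $\tfrac{r}{2}=\tfrac{k}{2}$ into Maass's expansion and bookkeeping the prefactors; the identity, valid in the convergence region, then extends to all $s$ by the analytic continuation of $G$ noted after Theorem~\ref{thm:maassFourierExpansion}.

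For the nonconstant terms I would first unwind the modified Whittaker function via its definition $W(\pm y;\alpha,\beta) = y^{-q/2}W_{\pm r/2,\,(q-1)/2}(2y)$, which converts $W(2\pi n y; s+k, s)$ into $(2\pi|n|y)^{-s-\frac{k}{2}}\, W_{\sgn{n}\frac{k}{2},\, s+\frac{k-1}{2}}(4\pi|n|y)$, using $\tfrac{q-1}{2} = s+\tfrac{k-1}{2}$. The remaining task is to collect scalars: the factor $\tfrac{1}{2}$ cancels the leading $2$ of Maass's formula, the $(\sqrt{2}\pi)^{q} = 2^{s+k/2}\pi^{2s+k}$ combines with $\pi^{-s-k/2}$ and with $(2\pi|n|y)^{-s-k/2}$ so that all powers of $2$ and of $\pi$ cancel exactly, the $y$-powers combine as $y^{s}\cdot y^{-s-k/2}=y^{-k/2}$, and the $|n|$-powers give $|n|^{-s-k/2}$. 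The denominator $\Gamma(\tfrac{q}{2}+\sgn{n}\tfrac{r}{2}) = \Gamma(s+\tfrac{k}{2}(1+\sgn{n}))$ is already in the stated form, so this reproduces the nonconstant terms verbatim.

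For the constant term I would split Maass's $\varphi_{k/2}(y,2s+k)$ into its two summands. The first, proportional to $\zeta(2s+k)\,y^{s}$, becomes $\tfrac{\Gamma(s+k/2+|k|/2)}{\Gamma(s+k/2)}\htz(2s+k)\,y^{s}$ at once upon recognizing $\htz(2s+k) = \pi^{-s-k/2}\Gamma(s+\tfrac{k}{2})\zeta(2s+k)$. The second summand, carrying $\zeta(2s+k-1)\,y^{1-s-k}$ together with $\Gamma(2s+k-1)/(\Gamma(s+k)\Gamma(s))$ and assorted powers of $2$ and $\pi$, is the delicate step.

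The hard part will be turning this second summand into $\tfrac{\Gamma(s+k/2)\,\Gamma(s+k/2+|k|/2)}{\Gamma(s+k)\Gamma(s)}\htz(2-2s-k)\,y^{1-s-k}$. Here I would apply Legendre's duplication formula $\Gamma(2s+k-1) = \pi^{-1/2}\,2^{2s+k-2}\,\Gamma(s+\tfrac{k-1}{2})\Gamma(s+\tfrac{k}{2})$, after which all powers of $2$ cancel and the surviving power of $\pi$ is $\pi^{1/2-s-k/2}$, producing $\pi^{1/2-s-k/2}\Gamma(s+\tfrac{k-1}{2})\Gamma(s+\tfrac{k}{2})\zeta(2s+k-1)$ up to the common prefactor. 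Recognizing $\pi^{1/2-s-k/2}\Gamma(s+\tfrac{k-1}{2})\zeta(2s+k-1) = \htz(2s+k-1)$ and then invoking the functional equation $\htz(w)=\htz(1-w)$ to rewrite $\htz(2s+k-1)=\htz(2-2s-k)$ completes the match. The one genuine subtlety is keeping the duplication formula and the zeta functional equation aligned so that the residual $\Gamma(s+\tfrac{k}{2})$ factor lands in the numerator exactly as claimed.
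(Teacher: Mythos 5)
Your proposal is correct and follows essentially the same route as the paper's own proof: specializing Maass's Fourier expansion of $G(z,\ol{z};\alpha,\beta)$ at $\alpha=s+k$, $\beta=s$ via the relation $\htE_k(z,s)=\tfrac12\pi^{-s-\frac k2}\Gamma\bigl(s+\tfrac k2+\tfrac{|k|}{2}\bigr)y^s G(z,\ol{z};s+k,s)$, then using the Legendre duplication formula and the functional equation $\htz(2s+k-1)=\htz(2-2s-k)$ for the constant term. Your bookkeeping of the powers of $2$, $\pi$, $y$, and $|n|$, and the placement of the residual $\Gamma\bigl(s+\tfrac k2\bigr)$ factor, are all accurate.
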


\begin{proof}
The Fourier expansion follows from that of $G(z, \ol{z}; \alpha, \beta)$ in Theorem \ref{thm:maassFourierExpansion}
via the relation
\[
\htE_k(z,s) =  \pi^{-s-\frac{k}{2}} \Gamma  \left(  s+ \mfrac{k}{2} +\mfrac{\abs{k}}{2} \right) 
\cdot \mfrac{1}{2} y^s G(z, \overline{z}; s+k, s).
\]
The duplication formula for the gamma function is used, 
as well as the functional equation of the Riemann zeta function   $\htz(2s+k-1) =\htz(2-2s-k)$.
\end{proof}

For later use we give a more detailed formula for the constant term in the Fourier series.
\begin{proposition}\label{prop:37}
For $k \in 2\ZZ$ and $s \in \CC$ the  Fourier constant term of the completed Eisenstein series $\htE_k(z, s)$ is as follows.
\begin{enumerate}[\textup(1\textup)]
\item Suppose $s \ne \frac{1-k}{2}$. Then for weights $k \ge 2$ we have
	\begin{multline*}
	C_0(y,s) =  \left( s+\mfrac{k}{2} \right)   \left( s+ \mfrac{k+2}{2} \right)  \cdots  \left( s+ \mfrac{2k-2}{2} \right)  \htz(2s+k)\, y^s \\
	+  (-1)^{\frac{k}{2}} s(s+1) \cdots  \left( s+ \mfrac{k-2}{2} \right)  \htz(2-2s-k)\, y^{1-s-k}.
    \end{multline*}
For weights $k \le  -2$, we have
	\begin{multline*}
	C_0(y,s) = (s-1)(s-2) \cdots  \left( s- \mfrac{|k|}{2} \right)  \htz(2s+k)\, y^s \\
	+  (-1)^{\frac{k}{2}} (s-|k|) (s- |k| +1) \cdots  \left( s- \mfrac{|k|}{2} -1 \right) \htz(2-2s-k)\, y^{1-s-k}.
    \end{multline*}
For weight $k=0$ and $s \ne 0$ or $1$, $C_0(y, s) = \htz(2s)y^s + \htz(2-2s) y^{1-s}$.

\item Suppose $s = \frac{1-k}{2}$. Then for weights $k \ne 0$,
$$
C_0 \left( y, \mfrac{1-k}{2} \right)  = \mfrac{1}{2} \cdot \mfrac{3}{2} \cdots \mfrac{ |k|-1}{2}
\left( \gamma - \log 4\pi + \log y + 2\left( 1 + \tfrac 13 + \cdots + \tfrac{1}{|k|-1} \right) \right)y^{\frac{1-k}{2}},
$$
where $\gamma$ is Euler's constant.
For weight $k=0$,
$C_0(y, \tfrac{1}{2}) =  y^{\frac{1}{2}} \left( \gamma - \log 4\pi + \log y \right)$.
\end{enumerate}
\end{proposition}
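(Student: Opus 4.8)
The plan is to start from the constant-term formula for $C_0(y,s)$ in Proposition~\ref{prop:FourierExpansionArbitrary} and simplify the two ratios of Gamma functions appearing as coefficients of $\htz(2s+k)\,y^s$ and $\htz(2-2s-k)\,y^{1-s-k}$. Part~(1) is then a direct reduction of these ratios to finite products (Pochhammer symbols), while part~(2) requires a limiting argument, since both completed zeta factors $\htz(1)$ acquire poles at $s=\tfrac{1-k}{2}$.

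For part~(1) I would use the elementary identity $\Gamma(z+n)/\Gamma(z)=z(z+1)\cdots(z+n-1)$ for $n\in\N$, applied separately in three cases. For $k\ge 2$ one has $|k|=k$, so after cancelling the common factor $\Gamma(s+k)$ the first coefficient is $\Gamma(s+k)/\Gamma(s+\tfrac k2)=(s+\tfrac k2)(s+\tfrac k2+1)\cdots(s+k-1)$ and the second is $(-1)^{k/2}\Gamma(s+\tfrac k2)/\Gamma(s)=(-1)^{k/2}s(s+1)\cdots(s+\tfrac{k-2}{2})$. For $k\le -2$ one has $|k|=-k$; cancelling $\Gamma(s)$ and $\Gamma(s+k)$ respectively and applying the same identity in the forms $\Gamma(s)/\Gamma(s-\tfrac{|k|}{2})$ and $\Gamma(s+\tfrac k2)/\Gamma(s+k)$ produces the two stated descending products $(s-1)\cdots(s-\tfrac{|k|}{2})$ and $(-1)^{k/2}(s-|k|)\cdots(s-\tfrac{|k|}{2}-1)$. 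For $k=0$ both ratios are identically $1$. This part is routine bookkeeping with signs.

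For part~(2) set $s=\tfrac{1-k}{2}$, so that $2s+k=1$ and $2-2s-k=1$, whence both completed zeta values are poles and the total expression must nonetheless have a finite limit. I would first verify that the two Gamma-ratio coefficients $A_1(s),A_2(s)$ from part~(1) agree at $s=\tfrac{1-k}{2}$, both equalling $P:=\tfrac12\cdot\tfrac32\cdots\tfrac{|k|-1}{2}$, and that the two powers of $y$ both reduce to $y^{(1-k)/2}$. Writing $s=\tfrac{1-k}{2}+\varepsilon$ and using the Laurent expansion $\htz(1+u)=u^{-1}+c_0+O(u)$ with $c_0=\tfrac\gamma2-\log 2-\tfrac12\log\pi$ (obtained from $\zeta(s)=(s-1)^{-1}+\gamma+\cdots$ together with the value $\psi(\tfrac12)=-\gamma-2\log 2$), the $\varepsilon^{-1}$ poles cancel because $\htz(1+2\varepsilon)$ and $\htz(1-2\varepsilon)$ contribute residues of opposite sign against equal leading factors $P\,y^{(1-k)/2}$. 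Collecting the $O(1)$ terms, the surviving content is $\tfrac{a_1-a_2}{2}+\log y+2c_0$, where $a_i=A_i'(\tfrac{1-k}{2})/P$ are the logarithmic derivatives. A short computation gives $a_1=2\big(1+\tfrac13+\cdots+\tfrac1{|k|-1}\big)$ and $a_2=-a_1$, so $\tfrac{a_1-a_2}{2}=2\big(1+\tfrac13+\cdots+\tfrac1{|k|-1}\big)$, while $2c_0=\gamma-\log 4\pi$; multiplying by $P\,y^{(1-k)/2}$ yields the claimed formula, with the case $k=0$ following from $P=1$ and an empty harmonic sum.

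The main obstacle is the bookkeeping in part~(2): one must simultaneously track the linear terms in $\varepsilon$ coming from $A_1,A_2$, from the powers $y^{\pm\varepsilon}$, and from the constant terms of the two Laurent expansions of $\htz$, and verify both that the poles cancel and that the coefficient of $\log y$ comes out to exactly $1$. The clean cancellation hinges on the two symmetries $A_1(\tfrac{1-k}{2})=A_2(\tfrac{1-k}{2})=P$ and $a_2=-a_1$; establishing these is where the real content lies, the remainder being the standard Laurent expansion of $\htz$ at its pole $s=1$.
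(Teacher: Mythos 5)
Your proposal is correct, and its skeleton matches the paper's: part (1) by collapsing the Gamma ratios of Proposition~\ref{prop:FourierExpansionArbitrary} via $\Gamma(z+1)=z\Gamma(z)$, and part (2) by a local expansion at the center $s=\frac{1-k}{2}$ using the Laurent expansion $\htz(1+u)=u^{-1}+\frac12(\gamma-\log 4\pi)+O(u)$ together with $y^{\pm\varepsilon}=1\pm\varepsilon\log y+O(\varepsilon^2)$, the poles cancelling. The genuine difference is in how the coefficient functions are expanded in part (2). The paper works directly with the Gamma-ratio form: it expands $\Gamma(a+\epsilon)=\Gamma(a)(1+\psi(a)\epsilon+O(\epsilon^2))$, invokes the reflection formulas $\Gamma(s)\Gamma(1-s)=\frac{\pi}{\sin\pi s}$ and $\psi(s)-\psi(1-s)=\frac{-\pi}{\tan\pi s}$, and then needs the NIST identities converting $\frac{1}{\sqrt\pi}\Gamma\left(\frac{1+|k|}{2}\right)$ into the product $\frac12\cdot\frac32\cdots\frac{|k|-1}{2}$ and $\psi\left(\frac{1+k}{2}\right)$ into the odd harmonic sum. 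You instead expand the polynomial coefficients $A_1,A_2$ already obtained in part (1), so that everything reduces to finite products of linear factors: $A_1=A_2=P$ at the center and $a_2=-a_1$ follow by inspection (the linear factors of $A_2$ at $s=\frac{1-k}{2}$ are exactly the negatives of those of $A_1$, the prefactor $(-1)^{k/2}$ restoring the value), and $a_1=2\left(1+\frac13+\cdots+\frac{1}{|k|-1}\right)$ is read off as the sum of reciprocals of the factors. This makes part (2) logically dependent on part (1), but it buys a more elementary and self-contained computation: no digamma reflection formula or special values are needed beyond the single constant $\psi(\tfrac12)=-\gamma-2\log 2$ hidden in the Laurent constant of $\htz$. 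Both routes ultimately rest on the same two symmetries — equal values and opposite logarithmic derivatives of the two coefficients at the center — which is what forces the pole cancellation and makes the coefficient of $\log y$ come out to exactly $1$; your write-up isolates this mechanism more transparently than the paper's Gamma-function bookkeeping does.
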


Note that Proposition \ref{prop:37} immediately gives the relation $C_0(y, s) = C_0(y, 1-k-s)$.

\begin{proof}[Proof of Proposition \ref{prop:37}]
(1)  The products of Gamma factors in
the constant term formula in \eqref{prop:FourierExpansionArbitrary}
simplify to polynomials using
the identity  $s \Gamma(s) =\Gamma(s+1)$.

(2)
We write $s= \frac{1-k}{2} + \frac{\epsilon}2$ and find that
\begin{equation*}
	C_0\left(y,\mfrac{1-k}2+\mfrac \epsilon2\right) = \Gamma\left(\mfrac{1+|k|}{2}+\mfrac\epsilon2\right) y^{\frac{1-k}2} \left( \frac{\hat\zeta(1+\epsilon)}{\Gamma(\frac 12+\frac \epsilon2)} y^{\frac \epsilon 2} + (-1)^{\frac k2} \frac{\Gamma(\frac 12+\frac \epsilon 2)\hat\zeta(1-\epsilon)}{\Gamma(\frac{1+k}{2}+\frac\epsilon 2)\Gamma(\frac{1-k}{2}+\frac\epsilon 2)} y^{-\frac \epsilon 2} \right).
\end{equation*}
Let $\psi(s)=\frac{\Gamma'}{\Gamma}(s)$.
Using the series exapnsions
\begin{align*}
	\Gamma(a+\epsilon)&=\Gamma(a)(1+\psi(a)\epsilon+O(\epsilon^2)), \\
	\hat\zeta(1+\epsilon) &= \frac{1}{\epsilon} + \mfrac 12(\gamma-\log 4\pi) + O(\epsilon), \\
	y^{\epsilon} &= 1 + (\log y) \epsilon + O(\epsilon^2),
\end{align*}
and the reflection formulas $\Gamma(s)\Gamma(1-s)=\frac{\pi}{\sin\pi s}$ and $\psi(s)-\psi(1-s)=\frac{-\pi}{\tan\pi s}$, we find that
\begin{equation*}
	C_0\left(y,\mfrac{1-k}2+\mfrac \epsilon2\right) = \mfrac{1}{\sqrt \pi} \, \Gamma\left(\mfrac{1+|k|}{2}+\mfrac\epsilon2\right) \left(2\gamma-\log \pi+\log y + \psi\left(\mfrac{1+k}{2}\right)+O(\epsilon)\right)y^{\frac{1-k}2}.
\end{equation*}
When $k=0$ we have $\Gamma(\frac{1+|k|}{2})=\sqrt \pi$ and $\psi(\frac{1+k}{2}) = -\gamma-\log 4$.
For $k\neq 0$ we use \cite[(5.4.2) and (5.5.4)]{NIST} to obtain
\begin{align*}
	\mfrac{1}{\sqrt \pi}\Gamma\left(\mfrac{1+|k|}{2}\right) &= \mfrac{1}{2} \cdot \mfrac{3}{2} \cdots \mfrac{ |k|-1}{2}, \\
	\psi\left(\mfrac{1+k}{2}\right) &= - \gamma - \log 4 + 2 \left( 1 + \tfrac 13 + \cdots + \tfrac{1}{|k|-1} \right).
\end{align*}
The proposition follows.
\end{proof}

We now collect various analytic properties of the Eisenstein series.

\begin{theorem}\label{th38}
{\rm (Properties of $E_k(z,s)$)}
Let $k \in 2\ZZ$.

\begin{enumerate}[\textup(1\textup)]
\item {\rm (Analytic Continuation)} For fixed $z \in \HH$, the  completed weight $k$ Eisenstein series  $\htE_k(z,s)$
analytically continues to the $s$-plane as
 a meromorphic function. For $k=0$ 
its has two singularities, which are 
simple poles at $s=0$ and $s=1$
with residues $-\frac{1}{2}$ and $\frac{1}{2}$, respectively.
For $ k \ne 0$ it is an entire function.

\item {\rm (Functional Equation)} For fixed $z \in \HH$, the completed weight
$k$ Eisenstein series
satisfies the functional equation
\begin{equation}~\label{213aa}
\htE_k(z, s)= \htE_k(z, 1-k-s).
\end{equation}
The doubly-completed series
\begin{equation} \label{Ek-dc-def}
	\dhtE_k(z,s) :=\left(s+ \mfrac{k}{2}\right)\left(s +\mfrac{k}{2}-1\right) \htE_k(z,s)
\end{equation}
 is an entire function of $s$ for all $k \in 2\ZZ$
and  satisfies the same functional equation
\begin{equation}~\label{213b}
\dhtE_k(z, s)= \dhtE_k(z, 1-k-s).
\end{equation}
The center line of these functional equations is $\re{s} = \frac{1-k}{2}.$

\item {\rm ($\Delta_k$-Eigenfunction)} $E_k(z, s)$ is a (generalized) eigenfunction
of the hyperbolic
Laplacian operator $\Delta_k$ with eigenvalue $\lambda= s(s+k-1).$ 
That is, for all $s \in \CC$,
\begin{equation}\label{209a}
\Delta_k E_{k}(z, s) = s(s+k-1) E_{k}(z, s).
\end{equation}
This eigenfunction property holds for 
the completed functions $\htE_k(z, s)$ and $\dhtE_k(z, s).$
\end{enumerate}
\end{theorem}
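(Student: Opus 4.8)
The plan is to reduce every assertion to the corresponding property of Maass's completed series $G^*(z,\ol{z};\alpha,\beta)$ from Theorem~\ref{thm:maassProperties}, by means of a single master identity. Setting $\alpha=s+k$, $\beta=s$ (so that $q=\alpha+\beta=2s+k$ and $r=\alpha-\beta=k$) and combining the defining relations \eqref{E-to-G}, \eqref{completed}, \eqref{Mdouble}, and \eqref{Ek-dc-def}, I would first record the identity
\begin{equation*}
	\dhtE_k(z,s) = -\mfrac{1}{2}\, y^s\, G^*(z,\ol{z}; s+k, s),
\end{equation*}
which holds because the scalar prefactors match once one uses $(s+\frac k2)(1-s-\frac k2) = -(s+\frac k2)(s+\frac k2-1)$. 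This identity is the linchpin of the whole argument.

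For the eigenfunction property (3), I would conjugate Maass's operator $\Omega_{s+k,s}$ by $y^s$. Writing $G=2y^{-s}E_k$ and expanding $\Omega_{s+k,s}[y^{-s}f]$, the various terms collect into
\begin{equation*}
	\Omega_{s+k,s}\big[y^{-s}f\big] = -y^{-s}\big(\Delta_k - s(s+k-1)\big)f,
\end{equation*}
where the cross terms in $\partial_y$ reproduce exactly the first-order part of $\Delta_k$ and the $y^{-s-2}$ term supplies the eigenvalue $s(s+k-1)$. Since $\Omega_{s+k,s}G=0$ by Theorem~\ref{thm:maassProperties}(2), this yields $\Delta_k E_k=s(s+k-1)E_k$. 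The statements for $\htE_k$ and $\dhtE_k$ are then immediate, since these differ from $E_k$ only by a factor depending on $s$ alone, which commutes with $\Delta_k$.

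For the functional equation (2) and the entireness of $\dhtE_k$, I would feed the master identity into Maass's functional equation $G^*(z,\ol{z};1-\alpha,1-\beta)=y^{q-1}G^*(z,\ol{z};\beta,\alpha)$. Taking $\alpha=s$, $\beta=s+k$ there gives $G^*(z,\ol{z};1-s,1-k-s)=y^{2s+k-1}G^*(z,\ol{z};s+k,s)$; substituting $s\mapsto 1-k-s$ into the master identity and letting the powers of $y$ telescope then produces $\dhtE_k(z,1-k-s)=\dhtE_k(z,s)$. The functional equation for $\htE_k$ follows because the prefactor $(s+\frac k2)(s+\frac k2-1)$ is invariant under $s\mapsto 1-k-s$, whose fixed point $s=\frac{1-k}{2}$ is the stated center line. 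Entireness of $\dhtE_k$ is read off directly: $G^*$ is entire in $q$ by Theorem~\ref{thm:maassProperties}(1) and $y^s$ is entire in $s$, so the product is entire for every $k$.

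The one genuinely computational step, and the main obstacle, is the precise pole structure of the singly completed $\htE_k$ in part (1). Because $\dhtE_k=(s+\frac k2)(s+\frac k2-1)\htE_k$ is entire, $\htE_k$ is meromorphic with poles confined to $s=-\frac k2$ and $s=1-\frac k2$. To pin these down I would use the explicit Fourier expansion: the non-constant coefficients in Proposition~\ref{prop:FourierExpansionArbitrary} are entire in $s$ (the $W$-Whittaker function is entire in its second index, $\sigma_{2s+k-1}(n)$ is entire, and the surviving ratio of Gamma factors is a polynomial for integer $k$), so every pole comes from the constant term analyzed in Proposition~\ref{prop:37}. For $k\neq 0$ the polynomial prefactors there vanish at $s=-\frac k2$ and $s=1-\frac k2$, cancelling the poles of $\htz(2s+k)$ and $\htz(2-2s-k)$, while $s=\frac{1-k}{2}$ is removable by part (2) of that proposition; hence $\htE_k$ is entire. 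For $k=0$ the constant term is $\htz(2s)y^s+\htz(2-2s)y^{1-s}$, and the simple poles of $\htz$ (residue $-1$ at argument $0$, $+1$ at argument $1$) give $\htE_0$ simple poles at $s=0$ and $s=1$ with residues $-\frac12$ and $+\frac12$, the contributions at $s=\frac12$ cancelling between the two summands. I expect this residue bookkeeping to be the most delicate portion of the proof.
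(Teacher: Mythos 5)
Your proposal is correct and follows essentially the same route as the paper: the master identity $\dhtE_k(z,s)=-\frac{1}{2}\,y^s\,G^*(z,\bar z;s+k,s)$, Maass's Theorem~\ref{thm:maassProperties} for entireness and the functional equation (with the same assignment $\alpha=s$, $\beta=s+k$), the operator identity $\Omega_{s+k,s}=-\Delta_k-2sy\frac{\partial}{\partial y}$ for the eigenfunction property, and reduction of the pole analysis to the constant Fourier term. The only differences are cosmetic: you compute the $k=0$ residues at $s=0,1$ directly from $\htz(2s)y^s+\htz(2-2s)y^{1-s}$ where the paper cites the literature, and your treatment of the point $s=\frac{1-k}{2}$ for $k\neq 0$ (via Proposition~\ref{prop:37}(2), i.e.\ cancellation between the two summands rather than a zero of the Gamma factors) is if anything slightly more careful than the paper's wording.
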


\begin{proof} 
(1) and (2).
We have $E_k(z, s) = y^s G(z, \ol{z}; s+k, s)$.
For the doubly-completed Eisenstein series, a calculation 
from the definition \eqref{Mdouble}
verifies the identity
\begin{equation}\label{E-to-G2}
\dhtE_{k}(z, s) = -\mfrac{1}{2} y^{s} G^{\ast}(z, \ol{z}; s+k, s).
\end{equation}
The latter is an entire function by Theorem \ref{thm:maassProperties}(1).
The functional equation \eqref{213b} for $\dhtE_k(z, s)$
follows from unwinding the second functional equation  
$$G^*(z, \ol{z}; 1-\alpha, 1-\beta) = y^{q-1} G^*(z, \ol{z}; \beta, \alpha)$$
in Theorem \ref{thm:maassProperties} (4), 
now making the assignment $\alpha=s, \beta=s+k$, so $q=2s+k$, and multiplying both sides by $y^{1-k-s}$.
Now the relation \eqref{Ek-dc-def}
implies  that $\htE_k(z, s)$ is meromorphic in $s$ and that  its only possible  singularities 
are at most simple poles at  $s= -\frac{k}{2}$ and at $s= 1-\frac{k}{2}$.
It inherits the functional equation $\htE_k(z, s) = \htE_k(z, 1-k-s)$, 
which  interchanges the two possible polar points and  implies the sum of
the residues of the (possible) poles at these two points is $0$.

It remains to determine when poles are present for general $k \in 2\ZZ$
which can be done using  its Fourier series expansion given
in Proposition \ref{prop:FourierExpansionArbitrary}.
The only singularities in $s$ can be contributed by
 the constant term of the Fourier expansion.
The case $k=0$ is well treated in the literature; 
where it has poles at $s=0$ and $s=1$
with residues as specified, e.g. \cite[pp. 45--46]{Kub73}, \cite[pp. 98--100]{LS06}.
For all $k \ne 0$  the gamma factors in the
formula for the constant term $C_0(y,s)$ contribute a canceling zero at all
the possible pole location points $s= \frac{2-k}{2}, s= \frac{k}{2}$ and $s= \frac{1-k}{2}$
associated to $\htz(2s+k)$ and $\htz(2-2s-k)$, and the functions 
$\htE_k(z, s)$ are entire functions. 

(3) From Theorem \ref{thm:maassProperties}(2) we have
\[
\Omega_{s+k, s} G(z, \ol{z}; s+k, s)=0,
\]
where
\begin{equation}\label{MPDE}
\Omega_{s+k, s}  = -y^2  \left(  \frac{\partial^2}{\partial x^2}+  \frac{\partial^2}{\partial y^2} \right)  
 +i k y \frac{\partial}{\partial x} - (k+2s) y \frac{\partial}{\partial y}=
 - \Delta_k - 2s y \frac{\partial}{\partial y}.
\end{equation}
By \eqref{E-to-G} it follows that
\[
	\big(\Delta_k - s(s+k-1)\big) E_k(z, s) = 0. \qedhere
\]
\end{proof}


\section{Polyharmonic Fourier Series Expansions} \label{sec:newsec4}

In this section we give the general form of the Fourier expansions of shifted polyharmonic Maass forms, with and without growth restrictions.
These expansions involve the Whittaker functions and their derivatives in the auxiliary parameter $s$.
We begin by discussing a certain family of linearly independent solutions to Whittaker's differential equation.

\subsection{Linearly independent Whittaker function solutions}\label{subsec:A1}
Given parameters $\kappa, \mu \in \C$, 
the Whittaker differential equation \cite[(13.14.1)]{NIST} is
\begin{equation} \label{eq:whit-diffeq}
\frac{d^2F}{dz^2}  + \left(-\frac{1}{4} + \frac{\kappa}{z} + \frac{\frac{1}{4} - \mu^2}{z^2}\right)F =0.
\end{equation}
The standard solutions to \eqref{eq:whit-diffeq} are the $M$-Whittaker function $M_{\kappa,\mu}(z)$ and the $W$-Whittaker function $W_{\kappa,\mu}(z)$.
For all parameters $(\kappa, \mu) \in \CC^2$ 
the  Whittaker $W$-function $W_{\kappa, \mu}(z)$ is uniquely determined up to a multiplicative constant
by its property of having rapid decay along the
positive real axis. 
The function $M_{\kappa,\mu}(z)$ does not exist for $2\mu+1 \in \Z_{\leq 0}$; furthermore, its Wronskian with $W_{\kappa,\mu}(z)$ is given by \cite[(13.14.26)]{NIST}
\[
	\mathcal W \left\{ M_{\kappa,\mu}(z), W_{\kappa,\mu}(z) \right\} = -\frac{\Gamma(1+2\mu)}{\Gamma(\frac 12 +\mu-\kappa)},
\]
so it is linearly dependent on $W_{\kappa,\mu}$ whenever $\frac 12+\mu-\kappa \in \Z_{\leq 0}$.

A second independent solution of the Whittaker differential equation is obtained by analytic continuation of
the function $W_{\kappa,\mu}(z)$ in the $z$-variable to the negative real axis. The analytic continuation of the function is multi-valued
with a branch point at $z=0$, so we get two different functions, according as we continue along a path in
the upper half plane,
\begin{equation} \label{eq:W-plus-def}
\sM^{+}_{\kappa, \mu}(z) := W_{-\kappa, \mu} (z e^{\pi i}),
\end{equation}
or along a path in the lower half plane,
$$
\sM^{-}_{\kappa, \mu}(z) := W_{-\kappa, \mu} (z e^{- \pi i}).
$$
The notation $\sM_{\kappa,\mu}^\pm(z)$ is introduced here and is not standard.
The functions $\sM_{\kappa,\mu}^\pm(z)$ are linearly independent with $W_{\kappa,\mu}(z)$ since the Wronskian \cite[(13.14.30)]{NIST}
\[
{\mathcal W}\{ W_{\kappa, \mu}(z), W_{-\kappa, \mu} ( z e^{\pm \pi i}) \} = e^{\mp \pi i \kappa}
\]
is everywhere nonzero.
For the sake of concreteness, in this paper we always choose $\sM_{\kappa,\mu}^+(z)$ as the second linearly independent solution to \eqref{eq:whit-diffeq}.

\subsection{Shifted harmonic Fourier coefficients}\label{sec:FourierExp1}

The following well-known result gives
allowable functional forms of Fourier coefficients for periodic functions
that satisfy $(\Delta_k- \lambda) f_n(z) = 0$,
with no restriction on the growth of $f_n$ at the cusp.

\begin{theorem}\label{thm:41a}
{\rm (Shifted harmonic Fourier coefficients of unrestricted growth)}
Let  $k \in 2 \ZZ$ and suppose  that $f_n(z) = h_n(y) e^{2 \pi i nx}$  
satisfies
$$
(\Delta_k- \lambda) f_n(z) =0 \quad \mbox{for all} \quad z=x+iy \in \HH.
$$
Write $\lambda = s_0(s_0+k-1)$ for some $s_0 \in \CC$ (there are generally two choices for $s_0$).
Then the  complete set of such functions $h_n(y)$ are given in the following cases.
\begin{enumerate}
\item[\textup(1\textup)]
If $n \neq 0$ then
\[
h_n(y) = y^{-\frac{k}{2}}  
 \left(a_n^{-} \, W_{\sgn{n}\frac{k}{2}, s_0+ \frac{k-1}{2}}(4 \pi |n| y) 
 + a_n^{+} \, \sM^+_{\sgn{n}\frac{k}{2}, s_0+ \frac{k-1}{2}} (4 \pi |n| y) \right)
\]
for some constants $a_n^{-}, a_n^{+} \in \CC$, 
where $W_{\kappa,\mu}$ and $\sM_{\kappa,\mu}^+$ are the Whittaker functions discussed in Section~\ref{subsec:A1}.

\item[\textup(2a\textup)]
Suppose $n =0$ with $s_0 \ne \frac{1-k}{2}$ (equivalently, $\lambda \ne -(\frac{1-k}{2})^2$). Then
$$ 
h_n(y) = a_0^{-} \, y^{1-k-s_0} + a_0^{+} \,  y^{s_0},
$$
for some constants $a_0^{-}, a_0^{+} \in \CC$.
\item[\textup(2b\textup)]
Suppose $n =0$ with $s_0 = \frac{1-k}{2}$ (equivalently, $\lambda = -(\frac{1-k}{2})^2$).
Then
$$
h_n(y) =  a_0^- \, y^{\frac{1-k}{2}} + a_0^+ \, \mfrac{\partial}{\partial s} y^{s} \big|_{s=\frac{1-k}{2}} = a_0^- \, y^{\frac{1-k}{2}} + a_0^+ \, y^{\frac{1-k}2} \log y,
$$
for some constants $a_0^-, a_0^+\in \CC$.
\end{enumerate}
\end{theorem}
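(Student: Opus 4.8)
The plan is to substitute the separated ansatz $f_n(z) = h_n(y)e^{2\pi i n x}$ directly into $(\Delta_k - \lambda)f_n = 0$ and reduce to an ordinary differential equation in $h_n(y)$. Using the explicit form of $\Delta_k$, the factor $e^{2\pi i n x}$ cancels and, after collecting terms (the $x$-derivatives contributing $-4\pi^2 n^2 y^2$ and $2\pi n k y$), one obtains
\begin{equation*}
y^2 h_n'' + k y\, h_n' + \bigl(2\pi n k\, y - 4\pi^2 n^2 y^2 - \lambda\bigr) h_n = 0.
\end{equation*}
The analysis then splits into the cases $n \neq 0$ and $n = 0$, which have genuinely different character (confluent hypergeometric versus equidimensional), and in each case the solution space is two-dimensional, so it suffices to exhibit an independent pair.

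For $n \neq 0$ I would clear the first-order term and rescale the variable by substituting $h_n(y) = y^{-k/2} g(w)$ with $w = 4\pi|n|y$. A routine computation shows the $g'$ term drops out and that $g$ satisfies the Whittaker equation \eqref{eq:whit-diffeq} with parameters $\kappa = \sgn{n}\tfrac{k}{2}$ and $\mu = s_0 + \tfrac{k-1}{2}$. The key identity is that the surviving potential term $\tfrac{k(2-k)}{4} - \lambda$ rearranges, using $\lambda = s_0(s_0+k-1)$, into $\tfrac14 - \bigl(s_0+\tfrac{k-1}{2}\bigr)^2$, so that $\tfrac14 - \mu^2$ takes exactly the required form; since the Whittaker equation depends on $\mu$ only through $\mu^2$, the choice of sign for $s_0 + \tfrac{k-1}{2}$ is immaterial. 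By Section~\ref{subsec:A1}, $W_{\kappa,\mu}(w)$ and $\sM^+_{\kappa,\mu}(w)$ form a linearly independent pair of solutions (their Wronskian being the nonvanishing $e^{-\pi i \kappa}$), and this pair is preferred because $\sM^+$ always exists, unlike $M_{\kappa,\mu}$. Every solution is therefore a combination $a_n^- W + a_n^+ \sM^+$, which yields the stated form of $h_n$ after multiplying back by $y^{-k/2}$.

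For $n = 0$ the equation collapses to the Euler equation $y^2 h_0'' + k y\, h_0' - \lambda h_0 = 0$. Trying $h_0 = y^\alpha$ gives the indicial equation $\alpha^2 + (k-1)\alpha - \lambda = 0$, whose discriminant is $(2s_0 + k - 1)^2$ by the same completion of the square used above, so its roots are exactly $\alpha = s_0$ and $\alpha = 1 - k - s_0$. When $s_0 \neq \tfrac{1-k}{2}$ these are distinct, producing the two independent powers of case (2a). When $s_0 = \tfrac{1-k}{2}$ the roots coincide, and the standard second solution for a repeated indicial root of an Euler equation is $y^\alpha \log y$; identifying this with $\tfrac{\partial}{\partial s} y^s\big|_{s = (1-k)/2}$ gives case (2b).

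Most of this is bookkeeping, and I expect the only genuinely delicate point to be confirming that the confluent parameters emerge exactly as claimed. In particular, the $\sgn{n}$ in $\kappa$ must be tracked carefully through the rescaling $w = 4\pi|n|y$ (it arises because $2\pi n k y = \sgn{n}\tfrac{k}{2}w$), and the completion of the square that pins down $\mu$ must be executed so that the $y^{-2}$ coefficient matches $\tfrac14 - \mu^2$. Everything else — the cancellation of the first-order term under $h_n = y^{-k/2}g$, and the passage from a basis of solutions to the general form — is routine once these parameters are in hand.
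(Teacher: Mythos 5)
Your proposal is correct and follows essentially the same route as the paper's proof: substitute $h_n = y^{-k/2}g$, reduce to Whittaker's equation with parameters $(\kappa,\mu) = \big(\sgn{n}\tfrac{k}{2},\, s_0+\tfrac{k-1}{2}\big)$, invoke the Wronskian-based linear independence of $W_{\kappa,\mu}$ and $\sM^+_{\kappa,\mu}$ from Section~\ref{subsec:A1}, and solve the Euler-type equation for $n=0$. The only differences are cosmetic: the paper first normalizes the frequency to $e^{\pm ix/2}$ and then rescales $z\mapsto 4\pi|n|z$ using dilation-invariance of $\Delta_k$, whereas you build the rescaling $w=4\pi|n|y$ into the substitution; likewise, for $n=0$ the paper passes to the normal-form equation for $g_0$ while you read off the indicial roots of the Euler equation directly.
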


\begin{proof}
($1$).
Let $\epp \in \{-1,1\}$.
The requirement that  $(\Delta_k -\lambda)h_{\epp}(y) e^{\frac{\epp ix}{2}}=0$
leads to a second-order linear ordinary differential equation that $h_{\epp}(y)$ must satisfy which
simplifies if we set $h_{\epp}(y) = g_{\epp} (y) y^{-\frac{k}{2}}$. 
We obtain
\begin{equation*}
\Delta_k  \left(  g_{\epp}(y) y^{-\frac{k}{2}} e^{\frac{\epp ix}{2}}  \right)  = 
y^{-\frac k2} e^{\frac{i\epp x}{2}} \left( -\mfrac 14 \left( \epp^2 y^2 - 2\epp k y + k^2-2k \right) g_\epp(y) + y^2 g_\epp''(y) \right),
\end{equation*}
which leads to the differential equation
\[
g_{\epp}^{''}(y)  + 
\left(- \frac{1}{4} + \frac{\frac{\epp k}2}{y}+
\frac{ -\frac{k^2}4 + \frac k2 - \lambda}{y^2} \right) g_{\epp}(y) = 0.
\]
We now set $\lambda= s_0(s_0+k-1)$, which yields
$-\frac{k^2}4+\frac k2-\lambda = \frac14-(s_0+\frac{k-1}2)^2$,
from which we conclude that $g_{\epp}(y)$ satisfies Whittaker's differential equation
$$
F''(w) +  \left(  -\frac{1}{4}+ \frac{\kappa}{w} + \frac{\frac{1}{4} -\mu^2}{w^2}  \right) F(w) =0
$$
with parameters $(\kappa, \mu) = ( \frac{\epp k}{2}, s_0 + \frac{k-1}{2}).$
By the discussion in Section \ref{subsec:A1}, for a fixed $s_0$ we obtain two linearly independent solutions 
$$
f_{\epp}^{+}(z) = y^{- \frac{k}{2}} W_{\frac{\epp k}{2}, s_0 + \frac{k-1}{2}}(y) e^{\frac{\epp i}{2} x},
$$
and
$$
f_{\epp}^{-}(z) = y^{- \frac{k}{2}} \sM^+_{\frac{\epp k}{2}, s_0 + \frac{k-1}{2}}(y) e^{\frac{\epp i}{2} x}.
$$
We obtain the given solutions by  replacing  $z$ with $ 4 \pi |n| z$, noting that the operator $\Delta_k$ is invariant
under rescaling by a multiplicative constant. 

($2a$) and ($2b$).
For $n=0$, again writing $h_0(y)=g_0(y)y^{-\frac k2}$, we obtain instead the differential equation
$$
g_{0}^{''}(y)  + 
\frac{ \frac{1}{4} - (s_0+ \frac{k-1}{2})^2}{y^2} g_{0}(y) = 0,
$$
which has 
two linearly independent solutions
$y^{s+\frac k2}$ and $y^{1-\frac k2-s}$ provided $s \ne \frac{1-k}{2}$;
this gives (2a).
In the exceptional case $s= \frac{1-k}{2}$ the 
differential equation
has  two linearly independent solutions given by $y^{\frac{1}{2}}$ and $y^{\frac{1}{2}} \log y,$
as given in (2b).
\end{proof}

We next treat the special case of Fourier coefficients for shifted harmonic functions having moderate
growth at the cusp.
In Appendix~\ref{sec:AppA} (see \eqref{eq:W-asymp}) we show that $W_{\kappa,\mu}(y)$ decays exponentially as $y\to \infty$, 
while $\sM^+_{\kappa,\mu}(y)$ grows exponentially as $y\to\infty$.
This immediately gives the following theorem.

\begin{theorem}\label{thm:41b}
{\rm (Shifted harmonic Fourier coefficients of moderate growth)}
Let  $k \in 2 \ZZ$ and suppose  that $f_n(z) = h_n(y) e^{2 \pi i nx}$  
satisfies
$$
(\Delta_k- \lambda) f_n(z) =0 \quad \mbox{for all} \quad z=x+iy \in \HH,
$$
and has at most polynomial growth in $y$ approaching the cusp $i\infty$.
Then the complete set of such $h_n(y)$ 
are those functions in Theorem \ref{thm:41a} which omit $\sM^+_{\kappa, \mu}(z)$,
i.e. those for which $a_n^{+} =0$ for all $n \ne 0$.
\end{theorem}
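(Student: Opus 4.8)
The plan is to read the statement off directly from Theorem~\ref{thm:41a} by pairing the explicit functional forms found there with the large-$y$ asymptotics of the Whittaker solutions recorded in Appendix~\ref{sec:AppA}. Since Theorem~\ref{thm:41a} already lists \emph{every} coefficient $h_n(y)$ solving the governing ODE with no growth restriction, the task reduces entirely to deciding which of those functions have at most polynomial growth as $y \to \infty$.

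First I would treat $n \ne 0$. Here Theorem~\ref{thm:41a} gives $h_n(y) = y^{-k/2}\bigl(a_n^- W_{\kappa,\mu}(4\pi|n|y) + a_n^+ \sM^+_{\kappa,\mu}(4\pi|n|y)\bigr)$ with $(\kappa,\mu) = (\sgn{n}\tfrac{k}{2}, s_0 + \tfrac{k-1}{2})$. From the asymptotics of Appendix~\ref{sec:AppA} (see \eqref{eq:W-asymp}), the function $W_{\kappa,\mu}(4\pi|n|y)$ decays like $e^{-2\pi|n|y}$ up to a polynomial factor, whereas $\sM^+_{\kappa,\mu}(4\pi|n|y)$ grows like $e^{+2\pi|n|y}$ up to a polynomial factor; the prefactor $y^{-k/2}$ is harmless to this dichotomy. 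If $a_n^+ \ne 0$ then the $\sM^+$-term dominates and $h_n(y)$ grows exponentially, contradicting the polynomial-growth hypothesis; hence $a_n^+ = 0$. No cancellation of the dominant exponential can occur, because $W_{\kappa,\mu}$ and $\sM^+_{\kappa,\mu}$ are linearly independent for all parameter values, their Wronskian $e^{-\pi i \kappa}$ from Section~\ref{subsec:A1} being nowhere zero. Conversely, when $a_n^+ = 0$ the surviving term decays exponentially, so it has polynomial growth a fortiori; this yields the converse inclusion and shows the listed functions form the \emph{complete} set.

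For $n = 0$ I would simply observe that both basis functions appearing in cases (2a) and (2b) of Theorem~\ref{thm:41a}---namely $y^{s_0}$, $y^{1-k-s_0}$, and in the degenerate case $y^{(1-k)/2}$ and $y^{(1-k)/2}\log y$---already grow at most polynomially in $y$. Thus the moderate-growth condition imposes no constraint on $a_0^\pm$, matching the statement that only the $n \ne 0$ coefficients are forced to drop the $\sM^+$ solution. The only genuinely substantive ingredient is the growth-versus-decay dichotomy for the two Whittaker solutions, which is exactly what Appendix~\ref{sec:AppA} supplies; once that is granted the present theorem follows immediately, so the hard part is contained in the appendix's asymptotic analysis rather than in the argument here.
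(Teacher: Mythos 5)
Your proposal is correct and matches the paper's own (very brief) argument: the paper likewise deduces the theorem immediately from Theorem~\ref{thm:41a} together with the exponential decay of $W_{\kappa,\mu}$ and exponential growth of $\sM^+_{\kappa,\mu}$ established in Appendix~\ref{sec:AppA}. Your extra appeal to the Wronskian is unnecessary (the asymptotic dominance of the $\sM^+$ term already rules out cancellation), but it does no harm.
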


\subsection{Shifted polyharmonic Fourier coefficients of unrestricted growth}\label{sec:FourierExp2}

We now characterize for general $m \ge 2$ the 
individual Fourier coefficients $f_n(z)$ which satisfy $(\Delta_k- \lambda)^m f_n(z) = 0$,
with no  growth restriction at the cusp. 
The space of allowable  Fourier coefficient functions always has dimension $2m$.
The basic mechanism leading to new functions is
that the operator 
$\frac{\partial}{\partial s}$ commutes with $\Delta_k$
but does not commute with the multiplication operator  $\lambda I=s(s+k-1)I$ when $s$ is regarded
as variable.  
Letting $[A, B] = AB-BA$,  the commutation relation
$[sI, \frac{\partial}{\partial s}] = -I$  
yields
\begin{equation} \label{eq:commutation}
\left[\lambda I, \mfrac{\partial}{\partial s} \right] = \left[ s(s+k-1)I, \mfrac{\partial}{\partial s}\right] = ( 1-k - 2s)I.
\end{equation}

We first treat the case of the non-constant Fourier coefficients,
which involve 
derivatives in the second parameter
of the Whittaker functions discussed in Section~\ref{eq:whit-diffeq}.
We introduce a new notation for these functions.
For $n\neq 0$ and for each $m\geq 0$ define
\begin{align}
	\label{eq:u-neg-def}
	u_{k,n}^{[m],-}(y;s_0) &:=
	\begin{dcases} 
		y^{-\frac k2} \frac{\partial^m}{\partial s^m} W_{\sgn{n}\frac k2,s+\frac{k-1}2}(4\pi|n|y) \Big|_{s=s_0} & \text{ if } s_0\neq \mfrac{1-k}{2}, \\
		y^{-\frac k2} \frac{\partial^{2m}}{\partial s^{2m}} W_{\sgn{n}\frac k2,s+\frac{k-1}2}(4\pi|n|y) \Big|_{s=\frac{1-k}{2}} &\text{ if }s_0=\mfrac{1-k}{2},
	\end{dcases}
	\\
	\label{eq:u-pos-def}
	u_{k,n}^{[m],+}(y;s_0) &:=
	\begin{dcases} 
		y^{-\frac k2} \frac{\partial^m}{\partial s^m} \sM^+_{\sgn{n}\frac k2,s+\frac{k-1}2}(4\pi|n|y) \Big|_{s=s_0} & \text{ if } s_0\neq \mfrac{1-k}{2}, \\
		y^{-\frac k2} \frac{\partial^{2m}}{\partial s^{2m}} \sM^+_{\sgn{n}\frac k2,s+\frac{k-1}2}(4\pi|n|y) \Big|_{s=\frac{1-k}{2}} &\text{ if }s_0=\mfrac{1-k}{2}.
	\end{dcases}
\end{align}

\begin{theorem}\label{thm:43a}
{\rm (Shifted polyharmonic Fourier coefficients of unrestricted growth, $n \ne 0$)}
Let  $k \in 2 \ZZ$. Suppose that $n \ne 0$ 
and  that $f_{n}(z) = h_{n}(y) e^{2 \pi i n x}$  is a
shifted polyharmonic  function  for $\Delta_k$ on $\HH$ with eigenvalue $\lambda \in \CC$, i.e.
it satisfies
\begin{equation} \label{eq:polyharmonic-f-n}
(\Delta_k- \lambda)^m f_n(z) =0 \quad \mbox{for all} \quad z=x+iy \in \HH.
\end{equation}
Write $\lambda = s_0(s_0+k-1)$ for some $s_0 \in \CC$ (there are generally two choices for $s_0$).
Then
\begin{equation*}
	h_{n}(y) =
	\sum_{j=0}^{m-1} \left( a_{n,j}^{-} \, u_{k,n}^{[j],-}(y;s_0) + a_{n,j}^{+} \, u_{k,n}^{[j],+}(y;s_0) \right)
\end{equation*}
for some constants $a_{n,j}^{-}, a_{n,j}^{+} \in \CC$.
\end{theorem}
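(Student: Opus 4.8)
The plan is to fix the Fourier mode and reduce \eqref{eq:polyharmonic-f-n} to an ordinary differential equation in $y$. Since $\Delta_k$ preserves functions of the form $h(y)e^{2\pi i n x}$ (the operators $\partial_x$ and $\partial_x^2$ act on $e^{2\pi i n x}$ as multiplication by $2\pi i n$ and $-(2\pi n)^2$), the equation $(\Delta_k-\lambda)^m f_n=0$ becomes $L^m h_n=0$, where $L$ is the second-order ODE operator obtained by restricting $\Delta_k-\lambda$ to this mode. On $(0,\infty)$ the operator $L^m$ has order $2m$ with non-vanishing leading coefficient, so by the existence--uniqueness theory for linear ODEs its solution space is exactly $2m$-dimensional. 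It therefore suffices to produce $2m$ linearly independent solutions, and the functions $u_{k,n}^{[j],\pm}(y;s_0)$ for $0\le j\le m-1$ are the natural candidates.

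First I would verify that these $2m$ functions solve the equation. Consider the family $\Phi_s(z):=y^{-k/2}W_{\sgn{n}\frac k2,\,s+\frac{k-1}2}(4\pi|n|y)e^{2\pi i n x}$; by Theorem~\ref{thm:41a} (the case $m=1$) it satisfies $\Delta_k\Phi_s=s(s+k-1)\Phi_s$. Writing $g_j:=\partial_s^j\Phi_s\big|_{s=s_0}$, I would apply $\partial_s^j$ to the identity $(\Delta_k-\lambda)\Phi_s=\big(s(s+k-1)-\lambda\big)\Phi_s$ and evaluate at $s_0$. Because $\Delta_k$ commutes with $\partial_s$ while $s(s+k-1)-\lambda$ vanishes to first order at $s_0$ and has constant second derivative $2$, the Leibniz rule collapses the right-hand side to the two-term recursion
\[
(\Delta_k-\lambda)g_j = j(2s_0+k-1)\,g_{j-1}+j(j-1)\,g_{j-2},
\qquad g_{-1}=g_{-2}=0.
\]
Since $\Delta_k-\lambda$ thus strictly lowers the index, $(\Delta_k-\lambda)^m g_j=0$ for $0\le j\le m-1$; the identical computation with $\sM^+$ in place of $W$ gives the same recursion for the analogous derivatives $\tilde g_j$ and handles the $u_{k,n}^{[j],+}$ functions. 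This is exactly the claim when $s_0\neq\frac{1-k}2$. When $s_0=\frac{1-k}2$ the coefficient $2s_0+k-1$ vanishes, so the recursion links only indices of the same parity; moreover $W_{\kappa,\mu}$ is even in $\mu$, so the odd-order $s$-derivatives vanish identically, which is precisely why the definitions \eqref{eq:u-neg-def}--\eqref{eq:u-pos-def} pass to $\partial_s^{2j}$ in this case.

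For linear independence I would iterate the recursion: it yields $(\Delta_k-\lambda)^{j}g_j=\big(\prod_{i=1}^{j} i(2s_0+k-1)\big)g_0$, a nonzero multiple of $g_0=u_{k,n}^{[0],-}e^{2\pi i n x}$, while $(\Delta_k-\lambda)^{j'}g_j=0$ for $j'>j$ (and likewise for $\tilde g_j$ and $\tilde g_0=u_{k,n}^{[0],+}e^{2\pi i n x}$). Given a relation $\sum_{j}(a_j g_j+b_j\tilde g_j)=0$, applying $(\Delta_k-\lambda)^{m-1}$ annihilates every term except those of top index and reduces it to $a_{m-1}g_0+b_{m-1}\tilde g_0=0$; since $u_{k,n}^{[0],-}$ and $u_{k,n}^{[0],+}$ are linearly independent---this is the $m=1$ case of Theorem~\ref{thm:41a}, equivalently the nonvanishing of the Wronskian of $W_{\kappa,\mu}$ and $\sM^+_{\kappa,\mu}$ recorded in Section~\ref{subsec:A1}---we obtain $a_{m-1}=b_{m-1}=0$. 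Downward induction on the exponent of $\Delta_k-\lambda$ then forces all $a_j=b_j=0$. In the degenerate case the argument is identical after replacing $g_j$ by the even-order functions and the factor $2s_0+k-1$ by $2$.

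Having exhibited $2m$ linearly independent solutions inside a $2m$-dimensional solution space, I conclude they form a basis, so every admissible $h_n$ is their unique linear combination, which is the asserted formula. The step I expect to be most delicate is the bookkeeping in the degenerate case $s_0=\frac{1-k}2$: there the first-order term of the recursion drops out, one must invoke the $\mu\mapsto-\mu$ symmetry of $W_{\kappa,\mu}$ to see that the odd-order derivatives vanish, and one must check that the surviving even-order derivatives still furnish a chain of length $m$ under $\Delta_k-\lambda$, so that both the dimension count and the independence argument carry over without change.
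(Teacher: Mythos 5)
Your proposal is correct, and its first half coincides with the paper's own proof: the paper likewise reduces to a $2m$-dimensional solution space of an order-$2m$ ODE and verifies that each $u_{k,n}^{[j],\pm}$ lies in the kernel of $(\Delta_k-\lambda)^m$ by commuting $\frac{\partial}{\partial s}$ past $\Delta_k-\lambda$ and inducting on $m$; your Leibniz-rule computation is the same idea, and your two-term recursion $(\Delta_k-\lambda)g_j=j(2s_0+k-1)g_{j-1}+j(j-1)g_{j-2}$ is in fact the precise form of the paper's somewhat looser displayed chain (it is exactly the recursion the paper proves later, in Theorem~\ref{thm:71}, for Eisenstein Taylor coefficients). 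Where you genuinely diverge is the linear-independence step. The paper invokes Corollary~\ref{cor:lin-ind} of Appendix~\ref{sec:AppA}, which rests on Mellin--Barnes representations and term-by-term differentiation of the asymptotic expansion of $W_{\kappa,\mu}(z)$ in $\mu$. You instead exploit the nilpotent chain structure: iterating your recursion gives $(\Delta_k-\lambda)^{j}g_j=j!\,(2s_0+k-1)^{j}g_0$ when $s_0\neq\tfrac{1-k}{2}$, and $(\Delta_k-\lambda)^{j}g_{2j}=(2j)!\,g_0$ in the degenerate case, so applying $(\Delta_k-\lambda)^{m-1},(\Delta_k-\lambda)^{m-2},\dots$ to a vanishing linear combination peels off coefficients from the top index down and reduces everything to the $m=1$ independence of $W_{\kappa,\mu}$ and $\sM^{+}_{\kappa,\mu}$, i.e. the nonvanishing Wronskian recorded in Section~\ref{subsec:A1}. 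Your route is more elementary and self-contained for this particular theorem: no asymptotic machinery is needed, and the degenerate point $s_0=\tfrac{1-k}{2}$ is handled by the same algebra. What the paper's heavier route buys is quantitative growth information: the asymptotics of Proposition~\ref{prop:whit-deriv-asy} and Corollary~\ref{cor:lin-comb-asy} are needed anyway to separate decaying from exponentially growing solutions in Theorem~\ref{thm:45} and to bound Fourier coefficients in Proposition~\ref{RLemma4}, so the paper reuses one tool for both purposes, whereas your argument would still require Appendix~\ref{sec:AppA} as soon as growth, rather than mere independence, is at stake.
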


\begin{proof}
We know a priori 
{from \eqref{eq:polyharmonic-f-n}
that  $h_n(y)$ must satisfy a linear differential equation of order $2m$, whose} 
 solutions will form a
a vector space of  $2m$ linearly independent functional solutions. 
By \eqref{eq:u-neg-def}, \eqref{eq:u-pos-def}, \eqref{eq:W-plus-def}, and Corollary~\ref{cor:lin-ind} from Appendix~\ref{sec:AppA}, the set
\[
	\left\{ u_{k,n}^{[j],-}(y;s_0) : 0\leq j\leq m-1 \right\} \cup \left\{ u_{k,n}^{[j],+}(y;s_0) : 0\leq j\leq m-1 \right\}
\]
is linearly independent.
It remains to show that
\begin{equation} \label{eq:Delta-k-u-pm}
	\big(\Delta_k-s_0(s_0-k-1)\big)^m u_{k,n}^{[j],\pm}(y;s_0) e^{2\pi i nx} =0
\end{equation}
for all $m\geq 1$ and all $j\leq m-1$.

We proceed by induction on $m$.
Theorem~\ref{thm:41a} shows that \eqref{eq:Delta-k-u-pm} is true for {the base case} $j=0$, $m=1$.
Suppose that \eqref{eq:Delta-k-u-pm} is true for all $m\leq r$ and all $j\leq m-1$ for some $r\geq 1$.
Then clearly \eqref{eq:Delta-k-u-pm} holds for $m=r+1$ and $j\leq r-1$, so it remains to show that it holds for $m=r+1$ and $j=r$.
For brevity, we write $U^{[j]}:=u_{k,n}^{[j],\pm}(y;s) e^{2\pi i nx}$ (thinking of $s$ as a variable) and write $\lambda=s(s+k-1)$.
Then by \eqref{eq:commutation} we have
\begin{align}
	(\Delta_k-\lambda) U^{[r]} 
	&= (\Delta_k-\lambda) \frac{\partial}{\partial s} U^{[r-1]} \notag \\
	&= \frac{\partial}{\partial s} (\Delta_k-\lambda) U^{[r-1]} + (1-k-2s) U^{[r-1]} = \ldots \notag \\
	&= \frac{\partial^r}{\partial s^r} (\Delta_k-\lambda) U^{[0]} + (1-k-2s) \sum_{i=1}^r U^{[r-i]} \notag \\
	&= (1-k-2s) \sum_{i=1}^r U^{[r-i]}
	\label{eq:U-r}
\end{align}
by Theorem~\ref{thm:41a}.
Applying the operator $(\Delta_k-\lambda)^r$ to both sides of \eqref{eq:U-r} we find that
\[
	(\Delta_k-\lambda)^{r+1} U^{[r]} = (1-k-2s) \sum_{i=1}^r (\Delta_k-\lambda)^r U^{[r-i]} = 0
\]
by the induction hypothesis.
The theorem follows. 
\end{proof}

It remains to treat the constant term case, whose solutions  involve power functions and logarithms.
This is straightforward,   but  note that the value $s= \frac{1-k}{2}$ is exceptional.


\begin{theorem}\label{thm:43b}
{\rm (Shifted polyharmonic Fourier coefficients of unrestricted growth,  $n=0$)}
Let  $k \in 2 \ZZ$. Suppose that  
$f_{0}(z) = h_0(y)$  
satisfies
$$
(\Delta_k- \lambda)^m f_0(z) =0 \quad \mbox{for all} \quad z=x+iy \in \HH.
$$
Write $\lambda = s_0(s_0+k-1)$ for some $s_0 \in \CC$ (there are generally two choices for $s_0$).
\begin{enumerate}[\textup(1\textup)]
\item
Suppose that $s_0 \ne \frac{1-k}{2}$ (equivalently,  $\lambda \ne -(\frac{1-k}{2})^2$). Then
\begin{align*}
h_n(y) &= \sum_{j=0}^{m-1} a_{0,j}^{+} \frac{\partial^j}{\partial s^j} y^{s} \Big|_{s=s_0}+
 \sum_{j=0}^{m-1} a_{0,j}^{-} \frac{\partial^j}{\partial s^j} y^{1-k-s} \Big|_{s=s_0} \\
 &= \sum_{j=0}^{m-1} a_{0,j}^{+}  (\log y)^j y^{s_0} + \sum_{j=0}^{m-1} a_{0,j}^{-}  (\log y)^j y^{1-k-s_0}
\end{align*}
for some constants $a_{0,j}^{+}, a_{0,j}^{-} \in \CC$.
\item
Suppose that $s_0 = \frac{1-k}{2}$ (equivalently,   $\lambda = -(\frac{1-k}{2})^2$).
Then
\begin{align*}
h_n(y) &= \sum_{j=0}^{m-1} \frac{\partial^j}{\partial s^j} \left( a_{0,j}^- \, y^{s} + a_{0,j}^+ \, y^{s}\log y \right) \Big|_{s=\frac{1-k}{2}} \\
&= \sum_{j=0}^{m-1} a_{0,2j}^- (\log y)^{2j} y^{\frac{1-k}{2}} + \sum_{j=0}^{m-1} a_{0,2j+1}^- (\log y)^{2j+1} y^{\frac{1-k}{2}}
\end{align*}
for some constants $a_{0,j}^+,a_{0,j}^- \in \CC$.
\end{enumerate}
\end{theorem}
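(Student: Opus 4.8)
The plan is to reduce the polyharmonic equation to a single ordinary differential equation in $y$ and then produce an explicit basis of its solution space by differentiating the exact power-function eigenfunctions $y^s$ in the spectral parameter. Since $f_0$ is independent of $x$, the operator $\Delta_k$ acts on $h_0(y)$ as the second-order ordinary differential operator $y^2\frac{d^2}{dy^2} + ky\frac{d}{dy}$, so $(\Delta_k-\lambda)^m$ is an ordinary differential operator of order $2m$ and its kernel is a $2m$-dimensional space of functions of $y$. Thus it suffices to exhibit $2m$ linearly independent solutions; they will automatically span the full solution set.

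The key input is that, as recorded in Theorem~\ref{thm:41a}(2a), for every $s\in\CC$ the power $y^s$ satisfies $\Delta_k y^s = \lambda(s) y^s$ with $\lambda(s):=s(s+k-1)$, and likewise $y^{1-k-s}$ has the same eigenvalue. Writing $\lambda_0=\lambda(s_0)$, I would use the factorization $\lambda(s)-\lambda_0 = (s-s_0)(s+s_0+k-1)$ together with the fact that $\partial/\partial s$ commutes with $\Delta_k$ (the source of the commutation relation \eqref{eq:commutation}). This gives the identity
\[
(\Delta_k-\lambda_0)^m \frac{\partial^j}{\partial s^j} y^s \Big|_{s=s_0} = \frac{\partial^j}{\partial s^j}\Big[(s-s_0)^m (s+s_0+k-1)^m\, y^s\Big]\Big|_{s=s_0},
\]
and by the Leibniz rule the right-hand side vanishes whenever $j$ is smaller than the order of vanishing of $(\lambda(s)-\lambda_0)^m$ at $s=s_0$. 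Since $\frac{\partial^j}{\partial s^j} y^s|_{s_0}=(\log y)^j y^{s_0}$, this manufactures solutions of the form $(\log y)^j y^{s_0}$, and the same computation applied to $y^{1-k-s}$ produces $(\log y)^j y^{1-k-s_0}$ (the sign $(-1)^j$ from differentiating the exponent is absorbed into the free constant).

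For part (1), $s_0\neq\frac{1-k}{2}$ means $s+s_0+k-1$ does not vanish at $s_0$, so $(\lambda(s)-\lambda_0)^m$ has a zero of exact order $m$; hence $(\log y)^j y^{s_0}$ and $(\log y)^j y^{1-k-s_0}$ are solutions for $0\le j\le m-1$, giving $2m$ functions that are linearly independent because the exponents $s_0$ and $1-k-s_0$ are distinct. For part (2), $s_0=\frac{1-k}{2}$ is exactly the critical point where the two branches collide: here $s+s_0+k-1 = s-s_0$, so $\lambda(s)-\lambda_0=(s-s_0)^2$ and $(\lambda(s)-\lambda_0)^m$ vanishes to order $2m$. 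Consequently $(\log y)^j y^{\frac{1-k}{2}}$ is a solution for the full range $0\le j\le 2m-1$, yielding $2m$ linearly independent functions (distinct powers of $\log y$); organizing them by even and odd $j$, equivalently by applying $\frac{\partial^{2j}}{\partial s^{2j}}$ to $y^s$ and to $y^s\log y = \frac{\partial}{\partial s}y^s$, recovers the stated form. In both cases the count matches the dimension $2m$, so these functions form a basis and every $h_0$ is such a combination.

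I expect the main obstacle to be the bookkeeping in part (2): one must see that the apparent loss of a solution branch (because $y^{s_0}$ and $y^{1-k-s_0}$ coincide) is exactly compensated by the eigenvalue map $s\mapsto\lambda(s)$ having a critical point at $s_0=\frac{1-k}{2}$, which doubles the order of vanishing and thereby doubles the number of $\log y$-powers produced from the single surviving branch. Verifying the order-of-vanishing claim cleanly, so that the ranges $j\le m-1$ (resp. $j\le 2m-1$) are sharp and give precisely a basis, is the one place where care is needed; everything else is the routine ODE reduction and a degree count of powers of $\log y$.
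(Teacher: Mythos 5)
Your proposal is correct, and its skeleton matches the paper's: reduce to a linear ODE of order $2m$ in $y$ (leading coefficient $y^{2m}$, nonvanishing on $y>0$, so the solution space has dimension exactly $2m$), exhibit $2m$ solutions as $s$-derivatives of the eigenfunction families $y^{s}$ and $y^{1-k-s}$, and finish by linear independence plus a dimension count. Where you genuinely diverge is in how you verify that these functions are annihilated by $(\Delta_k-\lambda)^m$. The paper disposes of this theorem in one line, saying the proof mirrors that of Theorem~\ref{thm:43a} using the commutation relation \eqref{eq:commutation}: there $\lambda=s(s+k-1)$ is treated as a multiplication operator in the variable $s$, and an induction on the depth shows that each application of $\frac{\partial}{\partial s}$ creates an error term proportional to $(1-k-2s)$ times lower-order derivatives, which the remaining powers of $\Delta_k-\lambda$ then kill. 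You instead freeze the shift $\lambda_0=s_0(s_0+k-1)$, so that $(\Delta_k-\lambda_0)^m$ commutes exactly with $\frac{\partial^j}{\partial s^j}$, and reduce everything to the Leibniz rule and the order of vanishing of
\[
\bigl(\lambda(s)-\lambda_0\bigr)^m=(s-s_0)^m(s+s_0+k-1)^m
\]
at $s=s_0$. This is non-inductive and makes the theorem's dichotomy conceptually transparent: the order of vanishing is $m$ generically and doubles to $2m$ precisely when $s_0=\frac{1-k}{2}$ is the critical point of the eigenvalue map $s\mapsto\lambda(s)$, which is exactly the compensation for the collision of the branches $y^{s_0}$ and $y^{1-k-s_0}$; after $y=e^t$ this is the classical root-multiplicity analysis of an Euler equation. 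Two minor remarks. First, the sharpness you flag as the delicate point is not actually needed: vanishing to order at least $m$ (resp.\ $2m$) already yields $2m$ independent solutions, and no more than $2m$ can exist, so the span is forced to be the whole kernel. Second, in case (1) the linear independence deserves one explicit line — after $y=e^t$ the functions become $t^je^{s_0t}$ and $t^je^{(1-k-s_0)t}$, and generalized exponentials with distinct pairs (exponent, log-power) are independent; this matters because $s_0$ and $1-k-s_0$ may differ only by a purely imaginary number, so a naive growth comparison does not suffice.
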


\begin{proof}
The proof mirrors that of Theorem~\ref{thm:43a}, using the commutation relation \eqref{eq:commutation}. \qedhere
\end{proof}

%
%
\subsection{Shifted polyharmonic Fourier coefficients of moderate growth }\label{sec:FourierExp4}

We now characterize the vector spaces of shifted polyharmonic Fourier coefficients of depth $m$ of
moderate growth. These vector spaces have dimension $m$ for all Fourier coefficients with index $n \ne 0$  but
have dimension $2m$ for the constant term coefficient $n=0$.
We immediately obtain the following theorem.

\begin{theorem}\label{thm:45}
{\rm (Shifted polyharmonic Fourier coefficients of moderate growth)}
Let  $k \in 2 \ZZ$. Suppose that $f_{n}(z) = h_{n}(y) e^{2 \pi i n x}$  is a
shifted-polyharmonic  function  for $\Delta_k$ on $\HH$ with eigenvalue $\lambda \in \CC$, i.e.
it satisfies
$$
(\Delta_k- \lambda)^m f_n(z) =0 \quad \mbox{for all} \quad z=x+iy \in \HH.
$$
Suppose also that $f_n(z)$ has at most polynomial growth in $y$ at the cusp.
Then $h_n(y)$ is of the form given in Theorem \ref{thm:43a} with the extra requirement that for $n \ne 0$ 
all coefficients $a_{n,j}^{+} =0$, i.e. no $\sM^+_{\kappa, \mu}(z)$-functions appear in the expansion.
\end{theorem}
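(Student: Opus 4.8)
The plan is to read off the general shape of $h_n(y)$ from Theorem~\ref{thm:43a} and then show that the moderate-growth hypothesis annihilates exactly the $\sM^+$-family. For $n=0$ no argument is needed: by Theorem~\ref{thm:43b} every admissible $h_0$ is a combination of terms $(\log y)^j y^{s_0}$ and $(\log y)^j y^{1-k-s_0}$ (and their confluent analogues when $s_0=\frac{1-k}2$), all of which already have polynomial growth, so all $2m$ dimensions survive. Thus the theorem is really a statement about the coefficients $a_{n,j}^+$ for $n\ne 0$, where
\[
	h_n(y) = \sum_{j=0}^{m-1}\Bigl( a_{n,j}^- \, u_{k,n}^{[j],-}(y;s_0) + a_{n,j}^+ \, u_{k,n}^{[j],+}(y;s_0)\Bigr),
\]
and the content is the dichotomy that the $u_{k,n}^{[j],-}$ decay while the $u_{k,n}^{[j],+}$ grow.

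First I would pin down the growth of each building block from the asymptotics of Appendix~\ref{sec:AppA}. By \eqref{eq:W-asymp} the function $W_{\sgn{n}\frac k2,\mu}(4\pi|n|y)$ carries the exponential factor $e^{-2\pi|n|y}$, and since differentiating the Whittaker asymptotic series in the index $\mu=s+\frac{k-1}2$ leaves that factor intact, every $u_{k,n}^{[j],-}(y;s_0)$ decays exponentially; hence the entire $W$-part of $h_n$ has moderate growth and can be set aside. Dually, $\sM^+_{\kappa,\mu}(z)=W_{-\kappa,\mu}(ze^{\pi i})$ carries the factor $e^{+2\pi|n|y}$, so each individual $u_{k,n}^{[j],+}(y;s_0)$ grows exponentially. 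Subtracting the decaying part, I am reduced to proving that $Q(y):=\sum_{j} a_{n,j}^+\, u_{k,n}^{[j],+}(y;s_0)$ has at most polynomial growth only when all $a_{n,j}^+$ vanish.

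The hard part --- and the only substantive step --- is to exclude a cancellation among the exponentially growing summands, for which the mere linear independence of the $u_{k,n}^{[j],+}$ (Corollary~\ref{cor:lin-ind}) does not suffice; one needs the refined asymptotics of the $\mu$-derivatives. Writing the uniform asymptotic expansion of $\sM^+_{\kappa,\mu}$ as $e^{2\pi|n|y}\,(\cdots)\sum_{r\ge0} b_r(\mu)(\cdots)^{-r}$, where each $b_r$ is a polynomial in $\mu$ that is even and of degree exactly $2r$, the expansion of $Q$ has $r$-th coefficient $\sum_j a_{n,j}^+\, b_r^{(j)}(\mu_0)$ in the generic case $s_0\ne\frac{1-k}2$ (and $\sum_j a_{n,j}^+\, b_r^{(2j)}(0)$ in the exceptional case, by \eqref{eq:u-pos-def}). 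Moderate growth of $Q$ forces all these coefficients to vanish for every $r\ge0$.

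Finally I would turn this infinite linear system into the conclusion $a_{n,j}^+=0$ by a linear-independence-of-profiles argument. Introducing the constant-coefficient operator $D:=\sum_j a_{n,j}^+\,\partial_\mu^{j}$ (respectively $D:=\sum_j a_{n,j}^+\,\partial_\mu^{2j}$ in the exceptional case), the vanishing conditions say that $D$ kills every $b_r$ at $\mu_0$; since the $b_r$ have distinct even degrees $0,2,4,\dots$, the family $\{b_r\}_{r\ge0}$ spans the even polynomials in $\mu$, so $(Dp)(\mu_0)=0$ for all even $p$. Testing $D$ against the monomials $\mu^{2l}$ and comparing the top-degree behaviour in $l$ (respectively, evaluating the even-order derivatives at $\mu_0=0$) forces the coefficients of $D$ to vanish one at a time, giving $a_{n,j}^+=0$ for all $j$. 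The reason the theorem follows ``immediately'' is that, once the derivative asymptotics of Appendix~\ref{sec:AppA} are granted, only this short polynomial computation remains.
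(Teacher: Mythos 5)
Your proof is correct, and its overall skeleton is the paper's: the case $n=0$ is disposed of by Theorem~\ref{thm:43b}, the term-by-term differentiated asymptotics of Appendix~\ref{sec:AppA} (Proposition~\ref{prop:whit-deriv-asy}) show that the $u_{k,n}^{[j],-}$ part decays, and everything reduces to showing that no nonzero combination $Q$ of the $u_{k,n}^{[j],+}$ can have moderate growth. Where you genuinely differ is in how that non-cancellation step is executed. The paper packages it as Corollary~\ref{cor:lin-comb-asy}, which it deduces not from the statement but from the \emph{proof} of Corollary~\ref{cor:lin-ind}: there the first two surviving terms of each differentiated expansion (the profiles $F_{2j+1}, F_{2j+2}$) are computed explicitly and their independence is checked by hand, order by order in $z^{-1}$. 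You instead exploit the structural fact that the $r$-th asymptotic coefficient $b_r(\mu)$ is an even polynomial of degree exactly $2r$ (a fact the paper also records but uses less systematically), so that $\{b_r\}_{r\geq 0}$ spans the even polynomials; the vanishing of all asymptotic coefficients of $Q$ then says that the operator $D=\sum_j a_{n,j}^+\partial_\mu^{j}$ (respectively $\sum_j a_{n,j}^+\partial_\mu^{2j}$ at the central point $s_0=\frac{1-k}{2}$) annihilates every even polynomial at $\mu_0$, and testing on the monomials $\mu^{2l}$ kills the coefficients one at a time — in the exceptional case the system is even triangular, since $(D\mu^{2l})(0)=a_{n,l}^+(2l)!$. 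You also correctly diagnose that the bare statement of Corollary~\ref{cor:lin-ind} (linear independence of functions) would not suffice, which is exactly why the paper needs Corollary~\ref{cor:lin-comb-asy}. Your route is a legitimate and arguably cleaner substitute for that corollary: it avoids the delicate comparison of leading orders across different $j$, handles the generic and central-point cases uniformly, and isolates the one nontrivial analytic input, Proposition~\ref{prop:whit-deriv-asy}, which both arguments share.
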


\begin{proof} 
This result for $n \ne 0$  is an immediate consequence of 
the asymptotics given in Corollary~\ref{cor:lin-comb-asy} from Appendix~\ref{sec:AppA}. 
The result for $n=0$ follows from Theorem \ref{thm:43b}.
\end{proof}

\subsection{Shifted polyharmonic Fourier expansions}\label{sec45}

We show that Theorem \ref{thm:45}
implies a  Fourier expansion formula valid for  all $m$-harmonic Maass forms
with shifted eigenvalue $\lambda$.   
For $n \ne 0$ define $u_{k,n}^{[m],\pm}(y;s_0)$ by \eqref{eq:u-neg-def} and \eqref{eq:u-pos-def}.
For $n=0$ with $s_0 \ne \frac{1-k}{2}$,
for each $m \ge 0 $ set
\begin{align}
	u_{k,0}^{[m],-}(y;s_0) = 
	\begin{cases}
		(\log y)^m \, y^{1-k-s_0} & \text{ if } s_0\neq \frac{1-k}{2}, \\
		(\log y)^{2m} \, y^{\frac{1-k}{2}} & \text{ if } s_0 = \frac{1-k}{2},
	\end{cases}
	\\
	u_{k,0}^{[m],+}(y;s_0) = 
	\begin{cases}
		(\log y)^m \, y^{s_0} & \text{ if } s_0\neq \frac{1-k}{2}, \\
		(\log y)^{2m+1} \, y^{\frac{1-k}{2}} & \text{ if } s_0 = \frac{1-k}{2}.
	\end{cases}
\end{align}
Then we have the following result, which concerns Fourier expansions for functions
of moderate growth.

%
%

\begin{theorem}\label{lem:abstractFourier}
{\rm (Fourier expansion in $V_k^m(\lambda)$)}
Let $f(z) \in V_k^m(\lambda) $ for some $k \in 2\ZZ$.
Let  $m \ge 1$,
and fix an $s_0 \in \CC$ with $\lambda= s_0(s_0+k-1)$. 
Then the Fourier expansion of $f(z)$ exists and  has the form
$$
f(z) = \sum_{j=0}^{m-1} \big(c_{0, j}^{+} u_{k, 0}^{[j],+}(y;s_0)  + c_{0,j}^{-} u_{k, 0}^{[j],-}(y;s_0)  \big)
  + \sum_{\substack{n=-\infty \\ n\neq 0}}^{\infty} \sum_{j=0}^{m-1}  c_{n,j}^{-} u_{k, n}^{[j], -}(y;s_0) e^{ 2\pi i n x},
$$
in which  $c_{n, j}^{\pm}$ are constants. 
This Fourier expansion converges absolutely and uniformly to $f(z)$ on compact subsets of $\HH$.
\end{theorem}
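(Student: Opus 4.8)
The plan is to derive the expansion from modular invariance, the fact that $\Delta_k$ commutes with horizontal translation, and the transfer of moderate growth to each Fourier mode, after which Theorem~\ref{thm:45} supplies the precise functional form of every coefficient.

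First I would exploit that $\smatr 1101 \in \PSL(2,\Z)$ has automorphy factor $(cz+d)^k = 1$, so that $f(z+1) = f(z)$; thus $x \mapsto f(x+iy)$ is smooth and $1$-periodic for each fixed $y$. This lets me define the Fourier coefficients $h_n(y) := \int_0^1 f(x+iy)\,e^{-2\pi i n x}\,dx$ and set $f_n(z) := h_n(y)\,e^{2\pi i n x}$. Because $f$ is real-analytic, differentiation under the integral sign is justified, and each $h_n$ is a smooth function of $y$.

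Next I would show that each $f_n$ is itself shifted polyharmonic of depth $m$. The operator $\Delta_k$ has coefficients depending only on $y$ and is a polynomial in $\partial_x,\partial_y$; integrating the identity $(\Delta_k-\lambda)^m f = 0$ against $e^{-2\pi i n x}$ over a period, then integrating by parts in $x$ (boundary terms cancel by periodicity, since $f(1+iy)=f(iy)$ and $e^{-2\pi i n}=1$) while differentiating under the integral in $y$, replaces $\partial_x$ by the scalar $2\pi i n$ on the $n$-th mode. Hence the $n$-th Fourier coefficient of $(\Delta_k-\lambda)^m f$ equals the coefficient function of $(\Delta_k-\lambda)^m f_n$, and uniqueness of Fourier coefficients forces $(\Delta_k-\lambda)^m f_n = 0$ for every $n$. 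Moderate growth transfers to each mode at once: from $|f(x+iy)| = O(y^K)$ uniformly in $x$ one gets $|h_n(y)| \le \sup_x |f(x+iy)| = O(y^K)$, so every $h_n$ has at most polynomial growth. With both properties in hand, Theorem~\ref{thm:45} applies term by term: for $n \ne 0$ only the $u_{k,n}^{[j],-}$ survive (the $\sM^+$-type coefficients $a_{n,j}^+$ vanish), while for $n=0$ both families $u_{k,0}^{[j],\pm}$ occur, exactly reproducing the stated expansion (the definitions of the $u$-functions absorbing the exceptional case $s_0=\frac{1-k}{2}$).

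Finally I would establish convergence. Repeated integration by parts in $x$, combined with boundedness of the $x$-derivatives of $f$ on any compact $K \subset \HH$ (where $y$ ranges over a compact interval), yields decay $|h_n(y)| = O_N(|n|^{-N})$ uniform for $x+iy \in K$; this gives absolute and uniform convergence of $\sum_n f_n$ on compacta, and standard Fourier theory for smooth periodic functions identifies the sum with $f$. The main obstacle is the middle step, namely justifying that each Fourier projection individually inherits both the polyharmonic equation and the moderate growth, i.e.\ the interchange of $(\Delta_k-\lambda)^m$ with the Fourier projection; but since $\Delta_k$ acts diagonally on Fourier frequencies this reduces to routine differentiation under the integral and integration by parts, so no genuinely new analytic input beyond Theorem~\ref{thm:45} is required.
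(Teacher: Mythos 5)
Your proof is correct and takes essentially the approach the paper intends: the paper's own ``proof'' is just a citation to Theorem 4.3 of \cite{LR15K}, whose argument runs along the same standard lines (periodicity from the translation matrix, mode-by-mode transfer of the equation $(\Delta_k-\lambda)^m f=0$ and of moderate growth to each Fourier coefficient, application of the coefficient classification in Theorem~\ref{thm:45}, and convergence from smoothness of the periodic function). Your write-up correctly supplies the details that the citation leaves implicit, in particular the commutation of $(\Delta_k-\lambda)^m$ with the Fourier projection via integration by parts in $x$ and differentiation under the integral in $y$.
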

\begin{proof}
See the proof of Theorem 4.3 of \cite{LR15K}.
\end{proof}

\begin{remark}
There is a more explicit version of 
the Fourier expansion for the case $m=1$ and eigenvalue $\lambda=0$,
which is a special case of a Fourier
expansion for  $1$-harmonic Maass forms that appears in the literature.
It contains  incomplete Gamma functions instead of Whittaker 
functions, see  \cite[Lemma 4.4]{LR15K}.
\end{remark}


\section{Maass Raising and Lowering Operators}\label{sec:MaassOps}

\subsection{ Properties of the Maass operators }\label{sec:PropMassOps}

Recall that the 
the weight $k$ hyperbolic Laplacian is defined as
\[
	\Delta_k := y^2 \left( \frac{\partial^2}{\partial x^2} + \frac{\partial^2}{\partial y^2} \right) 
- iky\left( \frac{\partial}{\partial x} + i \frac{\partial}{\partial y} \right).
\]
Maass \cite[Chap. 4.1]{Maa83} introduced raising and lowering operators%
\footnote{Maass's original operators are 
$K_{\alpha}= \alpha +(z- \overline{z}) \frac{\partial}{\partial z}$ and $L_{\beta}= - \beta + (z- \overline{z}) \frac{\partial}{\partial \overline{z}},$ 
which differ from the operators $R_k$ and $L_k$ defined in this section. 
The operators $R_k$ and $L_k$ differ as well from those in Bump  \cite[Sec. 2.1]{Bu97}, 
to which they are related by $R_k := \frac{1}{y} (R_k^{B}+ \frac{k}{2})$ and $L_k = -y (L_k^{B} +\frac{k}{2})$, with
superscript $B$ denoting Bump's operators.}
which relate eigenfunctions of $\Delta_k$ with eigenfunctions of $\Delta_{k+2}$ and $\Delta_{k-2}$, respectively.
We follow the convention of \cite[Sec. 2]{BOR08} and define the weight $k$ {\em Maass raising operator} by
$$
R_k := 2i \frac{\partial}{\partial z} + \frac{k}{y} = i\left( \frac{\partial}{\partial x} - i \frac{\partial}{\partial y} \right) + \frac{k}{y},
$$
and the weight $k$ {\em  Maass lowering operator} by
$$
L_k := 2i y^2\frac{\partial}{\partial \bar{z}}  = iy^2\left( \frac{\partial}{\partial x} + i \frac{\partial}{\partial y} \right).
$$
Note that $L_k$ is independent of the weight $k$.
As the following well-known lemma shows, $R_k$ raises the weight by $2$ and $L_k$ lowers the weight by $2$.

\begin{lemma}\label{RLemma1}

For any $\gamma\in \SL(2,\RR)$ we have
\begin{equation}
	R_k \left( f\big|_k \gamma \right) = \left(R_k f\right) \big|_{k+2} \gamma
\end{equation}
and
\begin{equation}
	L_k \left( f\big|_k \gamma \right) = \left(L_k f\right) \big|_{k-2} \gamma.
\end{equation}
\end{lemma}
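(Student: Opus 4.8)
The plan is to verify both identities by direct computation with the Wirtinger derivatives, writing $j := cz+d$ and recalling that the slash operator is $(f\big|_k\gamma)(z) = j^{-k} f(\gamma z)$ with $\gamma z = \frac{az+b}{cz+d}$. I abbreviate by $f_z$ and $f_{\bar z}$ the derivatives $\frac{\partial f}{\partial z}$ and $\frac{\partial f}{\partial \bar z}$. Before starting I would record the three elementary transformation facts that do all the work: (i) since $\det\gamma = 1$, the Möbius map has holomorphic derivative $\frac{\partial}{\partial z}(\gamma z) = j^{-2}$ while $\frac{\partial}{\partial \bar z}(\gamma z) = 0$, so that $\frac{\partial}{\partial z}f(\gamma z) = j^{-2}f_z(\gamma z)$ and $\frac{\partial}{\partial \bar z}f(\gamma z) = \bar j^{-2} f_{\bar z}(\gamma z)$; (ii) the imaginary part transforms as $\im{\gamma z} = y/|j|^2$; and (iii) the algebraic identity $j - \bar j = 2icy$, equivalently $\im{j} = cy$, which is immediate from $j = (cx+d) + icy$.

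For the raising operator I would expand $R_k(f\big|_k\gamma) = \left(2i\frac{\partial}{\partial z} + \frac{k}{y}\right)\left(j^{-k} f(\gamma z)\right)$ via the product rule and fact (i). Differentiating $j^{-k}$ produces $-2ikc\,j^{-k-1}f(\gamma z)$, the inner chain rule contributes $2i\,j^{-k-2}f_z(\gamma z)$, and the $\frac{k}{y}$ term gives $\frac{k}{y}j^{-k}f(\gamma z)$. On the other side, $(R_kf)\big|_{k+2}\gamma = j^{-k-2}\left(2i f_z(\gamma z) + \frac{k}{\im{\gamma z}}f(\gamma z)\right)$; substituting $\im{\gamma z} = y/|j|^2$ from (ii) and simplifying $|j|^2 j^{-k-2} = \bar j\, j^{-k-1}$, the $f_z$-terms match at once. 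Matching the remaining coefficients of $f(\gamma z)$ reduces (after dividing by $k\,j^{-k-1}$, the case $k=0$ being trivial since the coefficient vanishes) to exactly $\frac{j - \bar j}{y} = 2ic$, which is fact (iii). This is the only nontrivial point in the whole argument.

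For the lowering operator the computation is cleaner because $L_k = 2iy^2\frac{\partial}{\partial \bar z}$ carries no weight dependence and because $j^{-k}$ is holomorphic, so $\frac{\partial}{\partial \bar z}$ annihilates it. Using fact (i), I would get $L_k(f\big|_k\gamma) = 2iy^2\, j^{-k}\bar j^{-2} f_{\bar z}(\gamma z)$ with no cross term. Computing $(L_kf)\big|_{k-2}\gamma = j^{-k+2}\cdot 2i\,(\im{\gamma z})^2 f_{\bar z}(\gamma z)$ and substituting $\im{\gamma z} = y/|j|^2$, the powers collapse as $j^{-k+2}|j|^{-4} = j^{-k}\bar j^{-2}$, giving precisely the same expression.

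The main obstacle is really just bookkeeping: keeping the holomorphic and anti-holomorphic chain rules straight — in particular that $\frac{\partial}{\partial \bar z}(\gamma z)=0$ and that the automorphy factor $j$ is holomorphic — and correctly tracking the powers of $j$ versus $\bar j$. The single genuinely substantive input is the identity $\im{j} = cy$ of fact (iii), which is what allows the $\frac{k}{y}$-term in $R_k$ to convert the holomorphic factor $j$ into the conjugate factor $\bar j$ demanded by the right-hand side.
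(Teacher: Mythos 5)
Your proof is correct and follows essentially the same route as the paper's: a direct computation with Wirtinger derivatives and the chain rule, with the key step being the identity $cz+d-2icy=c\bar z+d$ (your fact (iii)), which is exactly the identity the paper uses to convert the automorphy factor into its conjugate in the $R_k$ computation; the $L_k$ case collapses identically in both arguments since $(cz+d)^{-k}$ is annihilated by $\partial/\partial\bar z$. No gaps.
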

\begin{proof}
For $\gamma=\smatr abcd$ we write $w=\gamma z=\frac{az+b}{cz+d}$, so that
\[
	\frac{\partial}{\partial z} = \frac{\partial w}{\partial z} \frac{\partial}{\partial w} + \frac{\partial \bar w}{\partial z} \frac{\partial}{\partial \bar w} = (cz+d)^{-2} \frac{\partial}{\partial w}.
\]
We compute
\begin{align*}
	R_k(f\big|_k\gamma) &= -2ikc(cz+d)^{-k-1} f(w) + 2i(cz+d)^{-k-2} \frac{\partial f}{\partial w} + \frac ky (cz+d)^{-k} f(w)\\
	&= (cz+d)^{-k-2} \left[ 2i \frac{\partial f}{\partial w} + \frac{k(cz+d)}y\left(-2icy+cz+d\right) f(w) \right].
\end{align*}
Since $-2icy+cz+d = c\bar z+d$ this becomes
\begin{align*}
	R_k(f\big|_k\gamma) &= (cz+d)^{-k-2} \left[ 2i \frac{\partial f}{\partial w} + \frac{k|cz+d|^2}y f(w) \right] \\
	&= (R_k f) \big|_k \gamma.
\end{align*}
For $L_k$ the proof is similar, and uses the relation
\[
	\frac{\partial}{\partial \bar z} = \frac{\partial w}{\partial \bar z} \frac{\partial}{\partial w} + \frac{\partial \bar w}{\partial \bar z} \frac{\partial}{\partial \bar w} = (c\bar z+d)^{-2} \frac{\partial}{\partial \bar w}. \qedhere
\]
\end{proof}

We recall the following relations (\cite[Sec.2]{BOR08}). 

\begin{lemma}\label{RLemma2}
\begin{enumerate}[\textup(1\textup)]
\item The Laplacian $\Delta_k$ can be expressed in terms of $R_k$ and $L_k$ 
in two ways:
\begin{align*}
-\Delta_k &= L_{k+2} R_k + k, \\
-\Delta_k &=  R_{k-2}{L_k}.
\end{align*}

\item The operators $R_k$ and $L_k$ satisfy the  commutation relations
$$
R_{k-2}{L_k}- L_{k+2} R_k = k.
$$

\item If $f$ is an eigenfunction of $\Delta_k$ satisfying $(\Delta_k-\lambda) f = 0$, then
$R_k f$ and $L_kf$ are also eigenfunctions with shifted eigenvalues:
\begin{gather*}
\big(\Delta_{k+2} - (\lambda+k)\big) R_k f = 0,\\
\big(\Delta_{k-2} - (\lambda+2-k)\big) L_k f = 0.
\end{gather*}
\end{enumerate}
\end{lemma}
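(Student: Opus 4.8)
The plan is to reduce all three parts to the Wirtinger form of the operators. Using $\frac{\partial}{\partial z} = \frac12\big(\frac{\partial}{\partial x} - i\frac{\partial}{\partial y}\big)$ and $\frac{\partial}{\partial \bar z} = \frac12\big(\frac{\partial}{\partial x} + i\frac{\partial}{\partial y}\big)$, one rewrites
$$
\Delta_k = 4y^2 \frac{\partial^2}{\partial z\,\partial\bar z} - 2iky\frac{\partial}{\partial\bar z}, \qquad R_k = 2i\frac{\partial}{\partial z} + \frac ky, \qquad L_k = 2iy^2\frac{\partial}{\partial\bar z}.
$$
The only nonformal ingredients needed are the elementary derivatives $\frac{\partial y}{\partial z} = -\frac i2$ and $\frac{\partial y}{\partial\bar z} = \frac i2$ (so that $\frac{\partial}{\partial z}(y^2) = -iy$ and $\frac{\partial}{\partial\bar z}(y^{-1}) = -\frac{i}{2y^2}$), together with the fact that the mixed second partials commute on $C^2$ functions.

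For part (1) I would compute the two compositions directly as differential operators. Applying $L_{k+2} = 2iy^2\frac{\partial}{\partial\bar z}$ to $R_k f = 2i\frac{\partial f}{\partial z} + \frac ky f$ and expanding with the product rule, the second- and first-order terms reassemble $-\Delta_k$, while the single place where $\frac{\partial}{\partial\bar z}$ differentiates the coefficient $\frac ky$ produces an extra scalar term (linear in $k$); this gives the first identity. For the composition $R_{k-2}L_k$ one instead differentiates the $y^2$ inside $L_k$, and here the two stray first-order pieces cancel so that one obtains exactly $-\Delta_k$ with no scalar correction. This establishes both identities of (1), and subtracting them immediately yields the commutation relation (2).

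For part (3) I would avoid recomputing and instead feed the identities of (1), read off at the shifted weights, into one another. Rearranging the first identity of (1) expresses $\Delta_k$ as a signed composition $-L_{k+2}R_k$ plus a scalar; substituting this into $R_k\Delta_k$ and then applying the companion identity $R_kL_{k+2} = -\Delta_{k+2}$ (that is, the clean $R_{k'-2}L_{k'}$ identity of (1) at weight $k'=k+2$) collapses $R_k\Delta_k$ into $\Delta_{k+2}R_k$ plus a scalar multiple of $R_k$. Applying the resulting intertwining identity to an eigenfunction $f$ with $\Delta_k f = \lambda f$ then gives $\Delta_{k+2}(R_k f) = (\lambda + c)R_k f$ for the appropriate scalar $c$; the statement for $L_k$ follows symmetrically, using the identity of (1) at weight $k-2$.

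The main obstacle is purely bookkeeping: because $R_k$ and $L_k$ are differential operators with $y$-dependent coefficients, they do not commute with multiplication by powers of $y$, so the scalar linear-in-$k$ corrections arise exactly from the product-rule derivatives of those coefficients and their signs must be tracked with care. A clean independent check that pins down every scalar constant is to evaluate all the identities on the constant-term eigenfunctions $f = y^a$, for which one computes directly that $R_k(y^a) = (a+k)y^{a-1}$, $L_k(y^a) = -a\,y^{a+1}$, and $\Delta_k(y^a) = a(a+k-1)y^a$; matching both sides of (1) and (3) on this one-parameter family determines each constant unambiguously and confirms the intertwining eigenvalue shifts.
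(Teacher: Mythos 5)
Your plan follows the same route as the paper's proof: establish (1) by a direct product--rule computation, get (2) by subtraction, and get (3) by composing the two identities of (1) at shifted weights (the paper's proof of (3) is exactly $\Delta_{k+2}R_k = (-R_kL_{k+2})R_k = R_k(\Delta_k+k)$, followed by evaluation on an eigenfunction). The problem is not the method but the signs, and it is not minor: carried out with this paper's conventions --- the Maass sign $\Delta_k = y^2(\partial_x^2+\partial_y^2)-iky(\partial_x+i\partial_y)$ and $L_k=+2iy^2\partial_{\bar z}$ with no minus sign --- the computation you outline does \emph{not} produce the stated identities. The single product--rule term you isolate is
\[
2iy^2\, k\, \partial_{\bar z}\bigl(y^{-1}\bigr) \;=\; 2iy^2\, k \left(-\tfrac{i}{2y^2}\right) \;=\; +k,
\]
so one finds $L_{k+2}R_k = -\Delta_k + k$ and $R_{k-2}L_k = -\Delta_k$; that is, $-\Delta_k = L_{k+2}R_k \mathbf{-}\, k$, not $L_{k+2}R_k+k$. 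Subtraction then gives $R_{k-2}L_k - L_{k+2}R_k = -k$, and the intertwining relations become $\Delta_{k+2}R_k = R_k(\Delta_k - k)$ and $\Delta_{k-2}L_k = L_k(\Delta_k + k-2)$, i.e.\ the eigenvalue shifts are $\lambda\mapsto\lambda-k$ and $\lambda\mapsto\lambda+k-2$, opposite in the $k$-terms to parts (2) and (3) of the statement.

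Your own test family $y^a$ --- whose formulas $R_k(y^a)=(a+k)y^{a-1}$, $L_k(y^a)=-ay^{a+1}$, $\Delta_k(y^a)=a(a+k-1)y^a$ are all correct --- \emph{refutes} rather than confirms the stated constants: $L_{k+2}R_k(y^a) = -(a-1)(a+k)\,y^a = (-\Delta_k+k)(y^a)$, which differs from $(-\Delta_k-k)(y^a)$ by $2k\,y^a$; and $\Delta_{k+2}(R_k y^a) = (a-1)(a+k)\,R_k y^a$ with $(a-1)(a+k)=\lambda-k$. So the step ``this gives the first identity'' fails, and the concluding claim that the $y^a$ check ``confirms the intertwining eigenvalue shifts'' cannot be carried out as described. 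The source of the mismatch is a convention clash in the paper itself: the lemma (and the paper's own proof of (3), via $-R_k L_{k+2}R_k = R_k(\Delta_k+k)$) reproduces the identities of \cite{BOR08} verbatim, but relative to \cite{BOR08} this paper has reversed the signs of \emph{both} $\Delta_k$ and $L_k$; under that double flip the clean identity $-\Delta_k=R_{k-2}L_k$ survives, while every term involving $k$ changes sign. The corrected versions are the ones consistent with Lemma~\ref{lem:Rk-Lk-u}, where $R_k$ sends the spectral parameter $s_0$ at weight $k$ to $s_0-1$ at weight $k+2$, i.e.\ eigenvalue $s_0(s_0+k-1)=\lambda$ to $(s_0-1)(s_0+k)=\lambda-k$. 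Your approach is the right one, but a valid write-up must either conclude with the corrected signs or flag the inconsistency in the statement; as a proof of the lemma as literally stated, the central computation does not close.
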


\begin{proof}
Relation (1) is a straightforward calculation, and relation (2) follows immediately from (1).
For (3) we obtain using (1) and (2)  
that
$$
\Delta_{k+2}R_k = (-R_k L_{k+2}) R_k = R_k (\Delta_k + k). 
$$
Thus we have the operator identity
\begin{equation}\label{eqn:521}
\big(\Delta_{k+2} -(\lambda+k) \big) R_k = R_k \big(\Delta_k- \lambda \big).
\end{equation}
If $(\Delta_k-\lambda) f = 0$ then \eqref{eqn:521} gives
$$
\big(\Delta_{k+2} -(\lambda +k)\big) R_k f = R_k \big( \Delta_k - \lambda\big) f =0.
$$
A similar calculation gives
the operator identity
\begin{equation}\label{eqn:522}
\big(\Delta_{k-2} -(\lambda+2-k)\big)L_k  = L_k \big( \Delta_k - \lambda\big),
\end{equation}
from which the second part of (3) follows.
\end{proof}

The following lemma generalizes part (3) of Lemma~\ref{RLemma2} to shifted polyharmonic functions.

\begin{lemma}\label{RLemma3}
If $(\Delta_k - \lambda)^m f =0$ then
\begin{gather*}
\big(\Delta_{k+2} - (\lambda +k)\big)^m \, R_k f = 0, \\
\big(\Delta_{k-2} - (\lambda+2-k) \big)^m \, L_k f =0.
\end{gather*}
\end{lemma}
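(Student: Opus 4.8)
The plan is to leverage the fact that the two identities established in the proof of Lemma~\ref{RLemma2}(3), namely \eqref{eqn:521} and \eqref{eqn:522}, are genuine \emph{operator} identities: they hold before any function is plugged in, not merely on eigenfunctions. Because of this, they can be iterated formally, and the polyharmonic statement follows from a clean induction on the depth $m$ with no new analysis required.

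First I would record the raising case. Starting from the operator identity \eqref{eqn:521},
\[
	\big(\Delta_{k+2} -(\lambda+k)\big) R_k = R_k \big(\Delta_k- \lambda\big),
\]
I claim that for every $m \ge 1$ one has the operator identity
\[
	\big(\Delta_{k+2} -(\lambda+k)\big)^m R_k = R_k \big(\Delta_k- \lambda\big)^m.
\]
This is proved by induction on $m$: the base case $m=1$ is \eqref{eqn:521} itself, and for the inductive step one writes
\[
	\big(\Delta_{k+2} -(\lambda+k)\big)^{m+1} R_k
	= \big(\Delta_{k+2} -(\lambda+k)\big)^{m} \big(\Delta_{k+2} -(\lambda+k)\big) R_k
	= \big(\Delta_{k+2} -(\lambda+k)\big)^{m} R_k \big(\Delta_k- \lambda\big),
\]
and then applies the inductive hypothesis to the leftmost factor, yielding $R_k (\Delta_k-\lambda)^m (\Delta_k-\lambda) = R_k(\Delta_k-\lambda)^{m+1}$. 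Evaluating this operator identity on $f$ and using the hypothesis $(\Delta_k-\lambda)^m f = 0$ gives
\[
	\big(\Delta_{k+2} -(\lambda+k)\big)^m R_k f = R_k \big(\Delta_k- \lambda\big)^m f = 0,
\]
which is the first assertion.

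The lowering case is entirely parallel: I would repeat the same induction with \eqref{eqn:522} in place of \eqref{eqn:521} to obtain the operator identity $(\Delta_{k-2}-(\lambda+2-k))^m L_k = L_k(\Delta_k-\lambda)^m$, and then evaluate on $f$. I do not expect any genuine obstacle here, since the argument is purely formal manipulation of commuting-up-to-conjugation relations; the only point that must be stated carefully is that \eqref{eqn:521} and \eqref{eqn:522} are valid as identities of differential operators (established in the proof of Lemma~\ref{RLemma2}), so that the powers may be peeled off one at a time without ever invoking that $f$ is a single eigenfunction.
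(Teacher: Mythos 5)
Your proposal is correct and follows essentially the same route as the paper: the paper also proves the result by iterating the operator identities \eqref{eqn:521} and \eqref{eqn:522} to obtain $\big(\Delta_{k+2}-(\lambda+k)\big)^m R_k = R_k\big(\Delta_k-\lambda\big)^m$ (and its lowering analogue), then applies the hypothesis on $f$. Your only addition is making the iteration an explicit induction, which is a presentational difference rather than a mathematical one.
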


\begin{proof}
For the first equation it suffices to  iterate the 
operator identity  \eqref{eqn:521} to obtain
$$
\big(\Delta_{k+2} -(\lambda +k)\big)^m R_k = \big(\Delta_{k+2} -(\lambda+k)\big)^{m-1}R_k \big(\Delta_k - \lambda\big) = \cdots =R_k\big(\Delta_k - \lambda\big)^m.
$$
The second equation follows by iterating \eqref{eqn:522} similarly.
\end{proof}

\subsection{ Maass operator action on shifted polyharmonic vector spaces}\label{sec:MassOpsShift}

\begin{lemma} \label{lem:Rk-Lk-u}
For $n\neq 0$, let $u_{k,n}^{[m],-}(y;s_0)$ be as in \eqref{eq:u-neg-def}.
Then we have
\begin{align}
	\label{eq:Rk-u}
	R_k \left(u_{k,n}^{[m],-}(y;s_0) e^{2\pi i n x} \right) &= u_{k+2,n}^{[m],-}(y;s_0-1) e^{2\pi i n x} \times
	\begin{cases}
		-1 & \text{ if } n>0, \\
		(s_0+k)(1-s_0) & \text{ if } n<0,
	\end{cases}
	\\
	\label{eq:Lk-u}
	L_k \left(u_{k,n}^{[m],-}(y;s_0) e^{2\pi i n x} \right) &= u_{k-2,n}^{[m],-}(y;s_0+1) e^{2\pi i n x} \times
	\begin{cases}
		s_0(s_0+k-1) & \text{ if } n>0, \\
		1 & \text{ if } n<0.
	\end{cases}
\end{align}
\end{lemma}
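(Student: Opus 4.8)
The plan is to reduce both identities to a single first-order computation on one Whittaker Fourier mode and then differentiate in the spectral parameter. The structural fact I would exploit is that $R_k = i(\partial_x - i\partial_y) + k/y$ and $L_k = iy^2(\partial_x + i\partial_y)$ are first-order operators in $x,y$ whose coefficients do not involve the auxiliary variable $s$; hence they commute with $\partial/\partial s$ and with evaluation at $s=s_0$. Introducing the variable-$s$ mode $F_{k,n}(z;s) := y^{-k/2} W_{\sgn{n}\frac k2,\,s+\frac{k-1}2}(4\pi|n|y)\,e^{2\pi i nx}$, so that $u_{k,n}^{[m],-}(y;s_0)\,e^{2\pi i nx} = \partial_s^m F_{k,n}(z;s)\big|_{s=s_0}$ (and with $\partial_s^{2m}$ in the exceptional case $s_0=\frac{1-k}2$), I would first prove the $m=0$ identities $R_k F_{k,n}(z;s) = c_R(s)\,F_{k+2,n}(z;s-1)$ and $L_k F_{k,n}(z;s) = c_L(s)\,F_{k-2,n}(z;s+1)$ with explicit scalars $c_R,c_L$, and only afterward apply $\partial_s^m\big|_{s=s_0}$.

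For the base case I would insert the explicit operators. Since $i\partial_x$ acts on $e^{2\pi i nx}$ as multiplication by $-2\pi n$, applying $R_k$ to $F_{k,n}$ and substituting $w=4\pi|n|y$ collapses $R_k F_{k,n}$ to $y^{-(k+2)/2}e^{2\pi i nx}$ times the first-order Whittaker expression $wW' + \sgn{n}(\kappa - \tfrac w2)W$, where $W=W_{\kappa,\mu}$ with $\kappa=\sgn{n}\frac k2$ and $\mu=s+\frac{k-1}2$. I would then recognize this via the standard contiguous (ladder) relations for the Whittaker $W$-function \cite{NIST}, which shift the first index $\kappa\mapsto\kappa\pm1$ while fixing $\mu$. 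Because the weight shift $k\mapsto k+2$ sends $\kappa=\sgn{n}\frac k2$ to $\kappa+\sgn{n}$, and keeping $\mu$ fixed forces the parameter shift $s\mapsto s-1$, this identifies $R_k F_{k,n}$ with a scalar multiple of $F_{k+2,n}(\,\cdot\,;s-1)$. The ladder scalar is the constant $-1$ in the index-raising case ($n>0$), whereas in the index-lowering case ($n<0$) it is the quadratic $(\kappa-\tfrac12)^2-\mu^2 = (1-s)(s+k)$. The computation for $L_k$ is entirely parallel with the two signs of $n$ interchanged, producing the constant $+1$ for $n<0$ and the $s$-dependent factor $-\big((\tfrac k2-\tfrac12)^2-\mu^2\big)=s(s+k-1)$ for $n>0$, exactly the scalars appearing in \eqref{eq:Rk-u} and \eqref{eq:Lk-u}.

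Passing to general $m$, I would apply $\partial_s^m$ and evaluate at $s_0$, using that $\partial_s^p F_{k\pm2,n}(z;s\mp1)\big|_{s_0}=u_{k\pm2,n}^{[p],-}(y;s_0\mp1)\,e^{2\pi i nx}$ by the chain rule (the shift $s\mapsto s\mp1$ has unit derivative). In the two cases where the ladder scalar is constant — $n>0$ for $R_k$ and $n<0$ for $L_k$ — the scalar commutes with $\partial_s^m$ and the stated single-term identity drops out at once. The remaining cases carry an $s$-dependent ladder coefficient, and this is where I expect the real work: applying $\partial_s^m$ through the Leibniz rule produces, besides the leading term $(s_0+k)(1-s_0)\,u^{[m]}$ (resp. $s_0(s_0+k-1)\,u^{[m]}$), a priori further contributions from $\partial_s$ and $\partial_s^2$ of the quadratic coefficient paired with the lower-index functions $u^{[m-1]}$ and $u^{[m-2]}$. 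Controlling this $s$-dependent case — verifying that the differentiation reduces to the clean single term recorded in \eqref{eq:Rk-u}–\eqref{eq:Lk-u} — is the principal obstacle; I would treat it by tracking the Leibniz coefficients explicitly and invoking the commutation relation \eqref{eq:commutation} together with Lemma \ref{RLemma3} and the recursion for $(\Delta_k-\lambda)u^{[j]}$ developed in the proof of Theorem \ref{thm:43a} to organize the lower-order contributions.
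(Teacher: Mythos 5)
Your base-case computation is the paper's proof, essentially verbatim: the paper also passes to the variable-$s$ mode, commutes $R_k$ and $L_k$ with $\partial/\partial s$, and carries out the $\kappa$-ladder step with the Whittaker identities \cite[(13.15.11), (13.15.23), (13.15.26)]{NIST}, landing on the same four scalars. For the two cases with constant ladder scalar ($R_k$ with $n>0$, $L_k$ with $n<0$) this already finishes the argument, exactly as you say.

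The gap is in the remaining two cases, and it is not one you can close: the step you deferred (``verifying that the differentiation reduces to the clean single term'') fails because \eqref{eq:Rk-u} and \eqref{eq:Lk-u} are in fact false there for $m\geq 1$. Take $R_k$, $n<0$, $m=1$, $s_0\neq\tfrac{1-k}{2}$. Since $R_k$ commutes with $\partial_s$, differentiating the $m=0$ identity $R_k F_{k,n}(z;s)=(s+k)(1-s)\,F_{k+2,n}(z;s-1)$ once in $s$ and evaluating at $s_0$ gives
\begin{multline*}
R_k\left(u_{k,n}^{[1],-}(y;s_0)\,e^{2\pi i n x}\right)
= (s_0+k)(1-s_0)\,u_{k+2,n}^{[1],-}(y;s_0-1)\,e^{2\pi i n x} \\
+ (1-k-2s_0)\,u_{k+2,n}^{[0],-}(y;s_0-1)\,e^{2\pi i n x}.
\end{multline*}
The second term is nonzero (as $s_0\neq\tfrac{1-k}{2}$), and by Corollary~\ref{cor:lin-ind} it is not a multiple of the first, so no bookkeeping via \eqref{eq:commutation} or Lemma~\ref{RLemma3} can remove it. (The special point $s_0=\tfrac{1-k}{2}$ fares no better: with the even-derivative convention of \eqref{eq:u-neg-def} one finds the correction term $-2m(2m-1)\,u_{k+2,n}^{[m-1],-}(y;s_0-1)\,e^{2\pi i n x}$.) In these cases $R_k$ and $L_k$ act triangularly, not diagonally, on the span of the $u^{[j],-}$. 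You should also know that the paper's own proof commits precisely the error you were too careful to commit: it asserts that commutation with $\partial/\partial s$ ``suffices to prove the lemma for the case $m=0$,'' a reduction that is legitimate only when the ladder scalar is $s$-independent. The correct repair is to weaken the lemma to the triangular statement (image contained in the span of $u_{k\pm 2,n}^{[j],-}(y;s_0\mp 1)$, $0\leq j\leq m$, with the displayed leading coefficient); that weaker statement is all that is used in the lemma's sole application, Proposition~\ref{RLemma4}, where one only needs each Fourier mode of $R_k f$ and $L_k f$ to be a combination of exponentially decaying Whittaker-derivative functions.
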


\begin{proof}
Since $\frac{\partial}{\partial z}$ and $\frac{\partial}{\partial \bar z}$ commute with $\frac{\partial}{\partial s}$, it suffices to prove the lemma for the case $m=0$.
To simplify notation, we let $\epp=\sgn{n}$ and we write $\mu=s_0+\frac{k-1}{2}$ and $w=4\pi|n|y$.

We begin with the $R_k$ formula.
We have
\[
	\frac{\partial}{\partial z} = \frac 12\left(\frac{\partial}{\partial x} - i\frac{\partial}{\partial y}\right) = -\frac i2 \frac{\partial w}{\partial y}\frac{\partial}{\partial w} = -2\pi i|n| \frac{\partial}{\partial w}.
\]
Hence
\begin{align}
	R_k \left(u_{k,n}^{[m],-}(y;s_0) e^{2\pi i n x} \right) 
	&= (4\pi|n|)^{\frac k2} \left(2i\mfrac{\partial}{\partial z} + \mfrac ky\right) \left( w^{-\frac k2} e^{\frac{\epp w}{2}} W_{\frac{\epp k}{2},\mu}(w) e^{2\pi i n z} \right) \notag \\
	&= (4\pi|n|)^{\frac k2+1} e^{2\pi i n z} \left[ \left(\mfrac kw - \epp + \mfrac{\partial}{\partial w}\right) w^{-\frac k2}e^{\frac{\epp w}{2}} W_{\frac{\epp k}{2},\mu}(w) \right]. \label{eq:Rk-u-w}
\end{align}
If $\epp=+1$ then \cite[(13.15.23) and (13.15.11)]{NIST} give
\begin{equation*} \label{eq:Rk-u-plus}
	\left(\mfrac kw - 1 + \mfrac{\partial}{\partial w}\right) w^{-\frac k2}e^{\frac{w}{2}} W_{\frac{k}{2},\mu}(w) = -w^{-\frac{k+2}{2}}e^{\frac w2} W_{\frac{k+2}{2},\mu}(w),
\end{equation*}
while if $\epp=-1$ then \cite[(13.15.26) and (13.15.11)]{NIST} give
\begin{equation} \label{eq:Rk-u-minus}
	\left(\mfrac kw + 1 + \mfrac{\partial}{\partial w}\right) w^{-\frac k2}e^{-\frac{w}{2}} W_{-\frac{k}{2},\mu}(w) = \left(\mfrac 12 + \mu+\mfrac k2\right) \left(\mfrac 12 - \mu+\mfrac k2\right) w^{-\frac {k+2}2} e^{-\frac w2} W_{-\frac{k+2}{2},\mu}(w).
\end{equation}
Equation \eqref{eq:Rk-u} follows from \eqref{eq:Rk-u-w}, \eqref{eq:Rk-u-plus}, and \eqref{eq:Rk-u-minus} after replacing $w$ by $4\pi|n|y$ and writing $\mu=s_0-1+\frac{k+2-1}{2}$.

The $L_k$ formula is similar.
Using the fact that $\frac{\partial}{\partial \bar z} = 2\pi i|n| \frac{\partial}{\partial w}$ we obtain
\begin{align*}
L_k \left(u_{k,n}^{[m],-}(y;s_0) e^{2\pi i n x} \right) 
&= -(4\pi|n|)^{\frac k2+1} e^{2\pi i n z}y^2 \mfrac{\partial}{\partial w} \left( w^{-\frac k2} e^{\frac{\epp w}{2}} W_{\frac{\epp k}{2},\mu}(w) \right) \\
&= (4\pi|n|)^{\frac k2+1} e^{2\pi i n z}y^2 e^{\frac{\epp w}{2}} w^{-\frac k2-1} W_{\epp\frac {k-2}2,\mu}(w) \\
& \hspace{1.5in} \times
\begin{cases}
	-\left(\mfrac 12 + \mu-\mfrac k2\right) \left(\mfrac 12 - \mu-\mfrac k2\right) & \text{ if }\epp=1, \\
	1 & \text{ if }\epp=-1,
\end{cases}
\end{align*}
where we used \cite[(13.15.23) and (13.15.26)]{NIST} in the last line.
Equation \eqref{eq:Rk-u} follows after replacing $w$ by $4\pi|n|y$ and writing $\mu=s_0+1+\frac{k-2-1}{2}$.
\end{proof}

{Lemmas~\ref{RLemma1} and \ref{RLemma3}  show that the Maass operators $R_k$ and $L_k$ preserve modularity and shift eigenvalues, and 
Lemma  \ref{lem:Rk-Lk-u} implies that they preserve moderate growth of the individual Fourier coefficients.}
These facts leads to the following proposition (illustrated schematically in Figure~\ref{fig:Rk-Lk}).

\begin{figure}[h]
\[
	\boxed{
	\xymatrixcolsep{5pc}
	\xymatrix{
		\ V_{k-2}^m(\lambda+2-k) \ \ar@<.3em>[r]^-{R_{k-2}} & \ar@<.3em>[l]^-{L_k} \ V_k^m(\lambda) \ \ar@<.3em>[r]^-{R_k} & \ar@<.3em>[l]^-{L_{k+2}} \  V_{k+2}^m(\lambda+k) \
	}}
\]
\caption{Action of $R_k$ and $L_k$ on the space $V_k^m(\lambda)$ (Proposition \ref{RLemma4}).}
\label{fig:Rk-Lk}
\end{figure}

\begin{proposition}\label{RLemma4}
 There holds
$$R_k \big(V_k^m(\lambda)\big) \subset V_{k+2}^m(\lambda +k)
$$
and
$$
L_k\big( V_k^m(\lambda)\big) \subset V_{k-2}^m(\lambda + 2-k).
$$
\end{proposition}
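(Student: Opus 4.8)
The plan is to combine the modularity and eigenvalue-shifting properties established in Lemmas~\ref{RLemma1} and \ref{RLemma3} with the growth-preservation encoded in Lemma~\ref{lem:Rk-Lk-u}. Concretely, let $f \in V_k^m(\lambda)$. Three things must be checked to conclude that $R_k f \in V_{k+2}^m(\lambda+k)$: that $R_k f$ is weight-$(k+2)$ modular for $\PSL(2,\Z)$, that it is annihilated by $(\Delta_{k+2}-(\lambda+k))^m$, and that it has moderate growth at the cusp. The modularity is immediate from Lemma~\ref{RLemma1}, since $f\big|_k\gamma = f$ for all $\gamma \in \PSL(2,\Z)$ gives $R_k f \big|_{k+2}\gamma = R_k(f\big|_k\gamma) = R_k f$. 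The polyharmonic annihilation is exactly the content of Lemma~\ref{RLemma3}. So the only substantive issue is the growth condition, and that is where the real work lies.

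First I would reduce the growth verification to a statement about individual Fourier coefficients. By Theorem~\ref{lem:abstractFourier}, any $f \in V_k^m(\lambda)$ has a Fourier expansion whose $n$-th term (for $n\neq 0$) is a combination of the moderate-growth functions $u_{k,n}^{[j],-}(y;s_0)$ and whose constant term is a combination of $u_{k,0}^{[j],\pm}(y;s_0)$; crucially, no $\sM^+$-type coefficients $u_{k,n}^{[j],+}$ with $n\neq 0$ appear, these being precisely the fast-growing solutions. Since $R_k$ acts on the Fourier expansion term-by-term (it is a first-order differential operator in $x$ and $y$ that sends $e^{2\pi i nx}$-terms to $e^{2\pi i nx}$-terms), it suffices to check that $R_k$ carries each moderate-growth coefficient of weight $k$ to a moderate-growth coefficient of weight $k+2$.

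The heart of the matter, then, is Lemma~\ref{lem:Rk-Lk-u}, which computes $R_k$ applied to $u_{k,n}^{[m],-}(y;s_0)e^{2\pi i nx}$ and shows the output is a scalar multiple of $u_{k+2,n}^{[m],-}(y;s_0-1)e^{2\pi i nx}$ — again a $W$-Whittaker (moderate-growth) coefficient, with no $\sM^+$ component introduced. For the constant term $n=0$ I would verify directly that $R_k$ and $L_k$ send the power/log functions $u_{k,0}^{[j],\pm}$ to functions of the same power/log type in the shifted weight, which is an elementary computation since $R_k$ and $L_k$ act on $y^s(\log y)^j$ by differentiation and multiplication by $k/y$ or $y^2$. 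The $L_k$ case is entirely parallel, using the second equation of Lemma~\ref{RLemma3} and the $L_k$-formula in Lemma~\ref{lem:Rk-Lk-u}, noting that $L_k$ lowers the weight by $2$ and shifts the eigenvalue to $\lambda+2-k$.

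The main obstacle I anticipate is confirming that no fast-growing solution is generated: a priori one must rule out that the raising operator could mix a decaying $W_{\kappa,\mu}$ coefficient into the exponentially growing $\sM^+_{\kappa,\mu}$ solution of the new weight. This is exactly why the explicit Whittaker recurrences \cite[(13.15.11), (13.15.23), (13.15.26)]{NIST} used in Lemma~\ref{lem:Rk-Lk-u} are essential: they show that $R_k$ maps $W_{\kappa,\mu}$ to a pure multiple of $W_{\kappa\pm1,\mu}$, with no $\sM^+$ contamination. Once Lemma~\ref{lem:Rk-Lk-u} is in hand the proof is a direct assembly of the three ingredients, so I would state the proposition as an immediate consequence, citing Lemmas~\ref{RLemma1}, \ref{RLemma3}, and \ref{lem:Rk-Lk-u} together with the Fourier expansion of Theorem~\ref{lem:abstractFourier}.
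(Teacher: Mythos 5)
Your overall architecture---modularity from Lemma~\ref{RLemma1}, the polyharmonic property from Lemma~\ref{RLemma3}, and reduction of the remaining issue to moderate growth via the Fourier expansion and Lemma~\ref{lem:Rk-Lk-u}---is exactly the paper's framework. But there is a genuine gap at the step you declare sufficient: you assert that because $R_k$ sends each individual Fourier term $c_{n,j}^-\, u_{k,n}^{[j],-}(y;s_0)e^{2\pi i nx}$ to a moderate-growth term of weight $k+2$, the full function $R_k f$ has moderate growth. Term-wise moderate growth of an infinite Fourier series does not imply moderate growth of its sum: one needs control of the coefficients $c_{n,j}^-$ \emph{uniformly in $n$}, since a priori they could grow arbitrarily fast in $|n|$ and overwhelm the exponential decay of the Whittaker functions at any fixed height $y$. (The same uniform control is what justifies applying the differential operator $R_k$ to the series term by term in the first place.) The absence of $\sM^+$-components, which you correctly emphasize, rules out exponential growth of each individual mode but says nothing about the sum over $n$.

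This is precisely where the paper's proof does its real analytic work. From the hypothesis $|f(z)|\ll y^\alpha$ one bounds the $n$-th Fourier coefficient integral $\int_0^1 f(z)e^{2\pi i nx}\,dx \ll y^\alpha$; combining this with the asymptotics of the $\mu$-derivatives of Whittaker functions (Proposition~\ref{prop:whit-deriv-asy}) gives $c_{n,j}^- \ll y^\beta e^{2\pi|n|y}$, and the choice $y=\frac{\log|n|}{|n|}$ then yields the polynomial bound $c_{n,j}^-\ll |n|^A$. Only with this bound in hand can one sum the series for $R_k f$ (whose coefficients, by Lemma~\ref{lem:Rk-Lk-u}, are the $c_{n,j}^-$ times constants depending only on $\sgn{n}$) against the decay $e^{-2\pi|n|y_0}$ and conclude $R_k f\ll y^B$, and similarly for $L_k f$. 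Your proposal omits this coefficient estimate entirely, so as written it does not establish the moderate growth of $R_k f$ or $L_k f$; inserting the estimate (or some equivalent uniform control on the coefficients) is what completes the argument along the paper's lines.
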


\begin{proof}
By the comments above, it suffices to show that if $f\in V_k^m(\lambda)$ then $R_k f$ and $L_k f$ have moderate growth as $y\to\infty$.
We first obtain an estimate {for the size of} the Fourier coefficients $c_{n,j}^-$ of $f$ for $n\neq 0$ (we can safely ignore the index $n=0$ terms since they clearly contribute at most polynomial growth).

Fix $y_0\geq 1$.
By assumption $|f(z)|\ll y^\alpha$ for some $\alpha$ uniformly for $x\in [0,1]$ and $y\geq y_0$.
Thus for each $n\neq 0$ and every $j\leq m$ we have
\[
	c_{n,j}^- \, u_{k,n}^{[j],-}(y;s_0) = \int_0^1 f(z) e^{2\pi i n x} \, dx \ll y^\alpha.
\]
Using the asymptotic formula from Proposition~\ref{prop:whit-deriv-asy} we obtain
\[
	c_{n,j}^{-} \ll y^\beta e^{2\pi |n|y}
\]
as $|n|y\to\infty$,
where the exponent $\beta$ and the implied constant are allowed to depend on $m$ (but not on $n$ or $j$).
Setting $y=\frac{\log|n|}{|n|}$ we find, for some $A\in \RR$, that
\begin{equation} \label{eq:cnj-bound}
	c_{n,j}^- \ll |n|^A.
\end{equation}

Hence, for any fixed $k'\in 2\Z$, $s_0'\in\C$, any $j\leq m$, and any constants $a,b\in\C$, we have the estimate
\begin{align*}
 	a\sum_{n>0} c_{n,j}^- u_{k',n}^{[j],-}(y;s_0') e^{2\pi i n x} + b \sum_{n<0} c_{n,j}^- u_{k',n}^{[j],-}(y;s_0') e^{2\pi i n x} \ll y^B \sum_{n\neq 0} |n|^A e^{-2\pi |n|y_0} \ll y^B
 \end{align*} 
for some $B\in \RR$, as $y\to\infty$.
It follows that both $R_k f$ and $L_k f$ satisfy the moderate growth condition, and this completes the proof.
\end{proof}

We conclude this section by establishing in Proposition~\ref{prop:RkLk-iso} below that, under certain conditions, the maps $R_k$ and $L_k$ are isomorphisms.
First, we prove the following lemma.

\begin{lemma} \label{lem:LkRk-iso}
\begin{enumerate}
\item If $\lambda+k\neq 0$ then the map
\begin{equation*}
	L_{k+2}R_k : V_k^m(\lambda) \to V_k^m(\lambda)
\end{equation*}
is an isomorphism.

\item If $\lambda\neq 0$ then the map
\begin{equation*}
	R_{k-2}L_k : V_k^m(\lambda) \to V_k^m(\lambda)
\end{equation*}
is an isomorphism.
\end{enumerate}
\end{lemma}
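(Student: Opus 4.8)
The plan is to recognize each composite as a scalar translate of the nilpotent operator $\Delta_k-\lambda$ on $V_k^m(\lambda)$ and then invert it by a terminating Neumann series. First I would check that both composites are genuine endomorphisms of $V_k^m(\lambda)$. By Proposition~\ref{RLemma4}, $R_k$ sends $V_k^m(\lambda)$ into $V_{k+2}^m(\lambda+k)$ and $L_{k+2}$ sends $V_{k+2}^m(\lambda+k)$ back into $V_k^m(\lambda)$, because the shifts compose as $(\lambda+k)+2-(k+2)=\lambda$; likewise $L_k$ sends $V_k^m(\lambda)$ into $V_{k-2}^m(\lambda+2-k)$ and $R_{k-2}$ returns it to $V_k^m(\lambda)$, since $(\lambda+2-k)+(k-2)=\lambda$. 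Next I would use the factorizations of Lemma~\ref{RLemma2}(1), namely $L_{k+2}R_k=-\Delta_k-k$ and $R_{k-2}L_k=-\Delta_k$, to rewrite these endomorphisms entirely in terms of $\Delta_k$. Setting $N:=\Delta_k-\lambda$ this gives
\[
	L_{k+2}R_k = -\bigl(N+(\lambda+k)I\bigr), \qquad R_{k-2}L_k = -\bigl(N+\lambda I\bigr).
\]

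The key observation is that $N$ is a nilpotent endomorphism of $V_k^m(\lambda)$ of index at most $m$: by definition $N^m f=(\Delta_k-\lambda)^m f=0$ for every $f\in V_k^m(\lambda)$, and $N$ preserves the space since the displayed identities exhibit $\Delta_k$ as a scalar shift of a composite already known to preserve $V_k^m(\lambda)$. For any nonzero scalar $c$ the operator $N+cI=c(I+c^{-1}N)$ is then invertible, with explicit inverse
\[
	(N+cI)^{-1} = \frac{1}{c}\sum_{j=0}^{m-1}\left(-\frac{1}{c}\right)^{j}N^{j},
\]
the sum terminating because $N^m=0$. Taking $c=\lambda+k$ for part (1) and $c=\lambda$ for part (2) shows that each composite is invertible precisely under the stated hypotheses, hence an isomorphism.

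I expect the only real subtleties to be bookkeeping rather than analysis: confirming that the eigenvalue shifts compose back to $\lambda$ so that the maps are honest endomorphisms, and observing that inverting a scalar-plus-nilpotent operator requires only the relation $N^m=0$ and not finite-dimensionality of $V_k^m(\lambda)$ (the latter being available from Theorem~\ref{thm:main1} in any case). One must also keep track of the constant $k$ appearing in $-\Delta_k=L_{k+2}R_k+k$, as it is exactly this term that shifts the relevant scalar from $\lambda$ to $\lambda+k$ and thus produces the hypothesis $\lambda+k\neq0$ in part (1).
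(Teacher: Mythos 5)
Your proof is correct, and it rests on the same key identity as the paper's --- Lemma~\ref{RLemma2}(1), giving $L_{k+2}R_k = -\Delta_k - k$ and $R_{k-2}L_k = -\Delta_k$ --- but the way you convert that identity into invertibility is genuinely different and, in one respect, more complete. The paper proves only \emph{surjectivity}, by induction on $m$: on $V_k^1(\lambda)$ the composite acts as the nonzero scalar $-(\lambda+k)$, and in the inductive step a preimage of $f$ is produced as $-\tfrac{1}{\lambda+k}(f+g)$, where $g$ is a preimage of $(\Delta_k-\lambda)f$ at depth $m-1$; the final step from surjectivity to ``isomorphism'' then tacitly relies on finite-dimensionality of $V_k^m(\lambda)$ (a surjective endomorphism of a finite-dimensional space is bijective), which is proved independently in Theorem~\ref{thm:main1}(1). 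You instead write each composite as $-(N+cI)$ with $N=\Delta_k-\lambda$ nilpotent of index at most $m$ and $c\in\{\lambda+k,\lambda\}$, and invert it by the terminating Neumann series; this yields injectivity and surjectivity simultaneously, needs no induction and no appeal to finite-dimensionality, and your explicit inverse is essentially what the paper's recursion produces when unwound. Your observation that $N$ preserves $V_k^m(\lambda)$ because $\Delta_k$ is a scalar shift of the composite $L_{k+2}R_k$, already known to preserve the space by Proposition~\ref{RLemma4}, is also a tidy way to avoid checking separately that $\Delta_k$ preserves moderate growth. One tiny quibble: your phrase ``invertible precisely under the stated hypotheses'' asserts a converse the lemma does not claim; it is harmless here (and in fact true whenever $V_k^m(\lambda)\neq\{0\}$, since a nilpotent operator on a nonzero space cannot be injective), but it is not something your argument establishes or needs.
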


\begin{proof}
We prove statement (1); the proof of (2) is analogous.

Suppose that $\lambda+k\neq 0$.
By Lemma~\ref{RLemma2}(1) we have the relation
\begin{equation*}
	L_{k+2}R_k = - \Delta_k - k.
\end{equation*}
It follows that, for $f\in V_k^1(\lambda)$, we have
\[
	L_{k+2}R_k f = -(\lambda+k)f.
\]
Thus $L_{k+2}R_k:V_k^1(\lambda)\to V_k^1(\lambda)$ is surjective.
We proceed by induction. 
Suppose that $m\geq 1$ and that $L_{k+2}R_k: V_k^{m-1}(\lambda)\to V_k^{m-1}(\lambda)$ is surjective.
If $f\in V_k^m(\lambda)$, then $(\Delta_k-\lambda)f\in V_k^{m-1}(\lambda)$.
So by the induction hypothesis, $(\Delta_k-\lambda)f=L_{k+2}R_k g$ for some $g\in V_k^{m-1}(\lambda)$.
We compute
\[
	L_{k+2}R_k (f+g) = -(\Delta_k+k)f+(\Delta_k-\lambda)f = -(k+\lambda) f,
\]
hence $L_{k+2}R_k:V_k^m(\lambda)\to V_k^m(\lambda)$ is surjective.
It follows that $L_{k+2}R_k$ is an isomorphism.
\end{proof}

\begin{proposition} \label{prop:RkLk-iso}
\begin{enumerate}
	\item The map $R_k: V_k^m(\lambda)\to V_{k+2}^m(\lambda+k)$ is an isomorphism when $\lambda+k\neq 0$.
	\item The map $L_k: V_k^m(\lambda)\to V_{k-2}^m(\lambda+2-k)$ is an isomorphism when $\lambda\neq 0$.
\end{enumerate}
\end{proposition}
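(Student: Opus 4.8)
The goal is to upgrade Lemma~\ref{lem:LkRk-iso} from a statement about the composite operators $L_{k+2}R_k$ and $R_{k-2}L_k$ to a statement about the individual operators $R_k$ and $L_k$, using the standard trick that a map which is a factor of an isomorphism is forced to be injective or surjective. I will prove part (1); part (2) is entirely analogous with the roles of $R$ and $L$ interchanged and the relevant non-vanishing condition $\lambda \neq 0$ in place of $\lambda+k \neq 0$.

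\textbf{Step 1 (inclusion).} First recall from Proposition~\ref{RLemma4} that $R_k$ already maps $V_k^m(\lambda)$ into $V_{k+2}^m(\lambda+k)$, so the map is well-defined. Thus it remains only to show it is bijective under the hypothesis $\lambda+k \neq 0$.

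\textbf{Step 2 (injectivity).} Suppose $\lambda+k \neq 0$. By Lemma~\ref{lem:LkRk-iso}(1), the composite $L_{k+2}R_k : V_k^m(\lambda) \to V_k^m(\lambda)$ is an isomorphism. If $R_k f = 0$ for some $f \in V_k^m(\lambda)$, then applying $L_{k+2}$ gives $L_{k+2}R_k f = 0$, and injectivity of the composite forces $f = 0$. Hence $R_k$ is injective on $V_k^m(\lambda)$.

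\textbf{Step 3 (surjectivity).} For surjectivity I would exploit the reverse composite. Note that $R_{k-2}L_k$ applied in the weight $k+2$ setting means the operator $R_k L_{k+2} : V_{k+2}^m(\lambda+k) \to V_{k+2}^m(\lambda+k)$; by Lemma~\ref{lem:LkRk-iso}(2) applied in weight $k+2$ with eigenvalue $\lambda+k$, this composite is an isomorphism provided $\lambda+k \neq 0$, which is exactly our hypothesis. Given any $g \in V_{k+2}^m(\lambda+k)$, there is an $h \in V_{k+2}^m(\lambda+k)$ with $R_k L_{k+2} h = g$, and then $f := L_{k+2} h \in V_k^m(\lambda)$ (by Proposition~\ref{RLemma4}) satisfies $R_k f = g$. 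Thus $R_k$ is surjective, and combined with Step~2 it is an isomorphism.

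\textbf{Main obstacle.} The one point requiring care is the bookkeeping of weights and eigenvalue shifts: one must verify that the composite $R_k L_{k+2}$ on $V_{k+2}^m(\lambda+k)$ really is the operator $R_{k'-2}L_{k'}$ of Lemma~\ref{lem:LkRk-iso}(2) with $k' = k+2$, and that the non-vanishing hypothesis there (namely $\lambda' \neq 0$ with $\lambda' = \lambda+k$) coincides with our standing assumption $\lambda + k \neq 0$. Once this index matching is confirmed, the argument is purely formal. The symmetric proof of part (2) uses injectivity of $L_k$ via Lemma~\ref{lem:LkRk-iso}(2) (requiring $\lambda \neq 0$) and surjectivity via the composite $L_k R_{k-2}$ on $V_{k-2}^m(\lambda+2-k)$, which is Lemma~\ref{lem:LkRk-iso}(1) applied in weight $k-2$ with eigenvalue $\lambda+2-k$, valid because $(\lambda+2-k)+(k-2) = \lambda \neq 0$.
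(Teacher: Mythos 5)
Your proof is correct, and it rests on exactly the same ingredients as the paper's: Lemma~\ref{lem:LkRk-iso}(1) applied to $V_k^m(\lambda)$ and Lemma~\ref{lem:LkRk-iso}(2) applied to $V_{k+2}^m(\lambda+k)$, with precisely the index matching you flag as the main obstacle (the hypothesis $\lambda'\neq 0$ of Lemma~\ref{lem:LkRk-iso}(2) in weight $k'=k+2$ becomes $\lambda+k\neq 0$, and symmetrically for part (2)). The only divergence is the final deduction. The paper reads off from the two composite isomorphisms the dimension inequalities $\dim V_{k+2}^m(\lambda+k)\geq \dim V_k^m(\lambda)$ and $\dim V_k^m(\lambda)\geq \dim V_{k+2}^m(\lambda+k)$, and concludes that the injective map $R_k$ between spaces of equal dimension is an isomorphism; this step implicitly invokes finite-dimensionality of the spaces $V_k^m(\lambda)$, which is Theorem~\ref{thm:main1}(1) and is only proved later (in Section~\ref{sec:Proofs}, without circularity, since that proof does not use Proposition~\ref{prop:RkLk-iso}). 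You instead extract injectivity of $R_k$ from injectivity of $L_{k+2}R_k$ and surjectivity of $R_k$ directly from surjectivity of $R_kL_{k+2}$ (factoring any $g$ as $R_k(L_{k+2}h)$ with $L_{k+2}h\in V_k^m(\lambda)$ by Proposition~\ref{RLemma4}). This purely formal route is marginally cleaner: it needs no dimension count at all, hence no appeal, even implicit, to finite-dimensionality, and would survive unchanged if the spaces were infinite-dimensional.
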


\begin{proof}
(1) Suppose that $\lambda+k\neq 0$.
Applying Lemma~\ref{lem:LkRk-iso}(1) to $V_k^m(\lambda)$, we find that
\[
	\dim V_{k+2}^m(\lambda+k) \geq \dim V_k^m(\lambda).
\]
Similarly, Lemma~\ref{lem:LkRk-iso}(2) applied to $V_{k+2}^m(\lambda+k)$ gives
\[
	\dim V_k^m(\lambda) \geq \dim V_{k+2}^m(\lambda+k).
\]
It follows that $R_k$ is an isomorphism.

(2) Suppose that $\lambda\neq 0$.
Arguing as in (1), we apply Lemma~\ref{lem:LkRk-iso}(2) to $V_k^m(\lambda)$ and Lemma~\ref{lem:LkRk-iso}(1) to $V_{k-2}^m(\lambda+2-k)$ to conclude that $L_k$ is an isomorphism.
\end{proof}


\section{The $\xi$-Operator and its Properties} \label{sec:newsec5}

\subsection{ Properties of the differential operators $\xi_k$}\label{sec:DifferentialOperators}

Bruinier and Funke \cite[Proposition 3.2]{BF04} introduce the operator
$$
\xi_k := 2i y^k \overline{\frac{\partial}{\partial \bar{z}}}.
$$
This operator is essentially ``half'' of a Laplacian; that is, we have (by a straightforward computation) the relation
\begin{equation} \label{eq:deltak-xik}
	\Delta_k = \xi_{2-k} \xi_k.
\end{equation}
The operator $\xi_k$ is
related to the Maass operator $L_k$ of Section~\ref{sec:MaassOps} by
\begin{equation} \label{eq:xik-Lk}
	\xi_k = -y^{k-2} \overline{L_k},
\end{equation}
and from this it inherits many of its important properties,
as the following lemmas show.

\begin{lemma}\label{lem:xi_commute}
Let $f : \H \to \CC$ be any $C^{1}$-function.
Then for $k\in \Z$ we have
$$
\xi_k  \left(  f\big|_k \gamma  \right)  =  (\xi_k f) \big|_{2-k} \gamma \qquad \text{ for all }\gamma\in \SL(2,\RR).
$$
\end{lemma}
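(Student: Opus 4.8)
The plan is to derive the transformation law for $\xi_k$ from the already-established law for the Maass lowering operator $L_k$, using the operator identity \eqref{eq:xik-Lk}, namely $\xi_k = -y^{k-2}\,\overline{L_k}$, where $\overline{L_k}$ denotes the conjugate-linear operator $f \mapsto \overline{L_k f}$. Lemma~\ref{RLemma1} already supplies $L_k(f\big|_k\gamma) = (L_k f)\big|_{k-2}\gamma$, so $L_k$ intertwines the weight-$k$ slash with the weight-$(k-2)$ slash; the task is to show that post-composing with $-y^{k-2}$ and complex conjugation converts this into the weight-$(2-k)$ slash of $\xi_k f$.

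First I would write, for $\gamma = \smatr{a}{b}{c}{d}$,
\[
	\xi_k\!\left(f\big|_k\gamma\right) = -y^{k-2}\,\overline{L_k\!\left(f\big|_k\gamma\right)} = -y^{k-2}\,\overline{(L_k f)\big|_{k-2}\gamma},
\]
the second equality by Lemma~\ref{RLemma1}. Setting $g := L_k f$ and unwinding the slash, $(g\big|_{k-2}\gamma)(z) = (cz+d)^{-(k-2)}g(\gamma z)$, so conjugation replaces the holomorphic automorphy factor by its antiholomorphic counterpart, yielding $-y^{k-2}\,\overline{(cz+d)}^{-(k-2)}\,\overline{g(\gamma z)}$.

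Next I would expand the right-hand side $(\xi_k f)\big|_{2-k}\gamma$ directly. Here the decisive computation is that the extra $y$-power transforms together with the automorphy factor: since $\im(\gamma z) = y/|cz+d|^2$, the factor $y^{k-2}$ acquires $|cz+d|^{-2(k-2)} = (cz+d)^{-(k-2)}\overline{(cz+d)}^{-(k-2)}$. Combining this with the weight-$(2-k)$ slash factor $(cz+d)^{k-2}$ cancels the holomorphic automorphy factor exactly and leaves $\overline{(cz+d)}^{-(k-2)}$, which matches the conjugated expression from the previous step. Comparing the two sides then gives the claimed identity $\xi_k(f\big|_k\gamma) = (\xi_k f)\big|_{2-k}\gamma$.

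The main obstacle is purely bookkeeping: keeping straight which factors are $(cz+d)$ versus $\overline{(cz+d)}$, since the complex conjugation built into $\xi_k$ (and into $\overline{L_k}$) flips holomorphic automorphy factors to antiholomorphic ones, and verifying that the $y^{k-2}$ weight factor together with this conjugation is precisely what flips the weight from $k-2$ to $2-k$. No analytic input is needed beyond the $C^1$-hypothesis (ensuring $L_k f$ is defined) and the cocycle relation $\im(\gamma z) = y|cz+d|^{-2}$. As an independent check, one may instead verify the lemma by direct computation: apply $\frac{\partial}{\partial\bar z}$ to $(cz+d)^{-k}f(\gamma z)$, use the holomorphy of $(cz+d)^{-k}$ in $z$ together with the chain rule $\frac{\partial\,\overline{\gamma z}}{\partial \bar z} = \overline{(cz+d)}^{-2}$, and conjugate; this reproduces the same combination of factors.
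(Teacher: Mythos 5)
Your proposal is correct and takes essentially the same route as the paper: both reduce the claim to Lemma~\ref{RLemma1} via the identity $\xi_k = -y^{k-2}\,\overline{L_k}$ and then track the automorphy factors using $\im{\gamma z} = y|cz+d|^{-2}$, the only cosmetic difference being that you expand both sides and match them in the middle while the paper runs a single chain of equalities from left to right.
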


\begin{proof}
From Lemma~\ref{RLemma1} and \eqref{eq:xik-Lk} we have
\begin{align*}
	\xi_k \left(f\big|_k \gamma\right) &= -y^{k-2} \overline{L_k \left(f\big|_k \gamma\right)} = -y^{k-2} \overline{(L_k f)\big|_{k-2}\gamma} \\
	&= y^{k-2} \, \im{\gamma z}^{2-k} \overline{(cz+d)}^{2-k} (\xi_k f)(\gamma z) = (\xi_k f) \big|_k \gamma. \qedhere
\end{align*}
\end{proof}

Lemma \ref{lem:xi_commute} implies that if $f$ is a weight $k$ 
(holomorphic or non-holomorphic) modular form for a discrete subgroup $\Gamma$
of $\SL(2, \ZZ)$, with no growth conditions imposed on any cusp, then $\xi_k f$ is  a
weight $2-k$  modular form for $\Gamma$, again imposing no growth condition at any cusp.

\begin{lemma}\label{lem:EigShift}
Suppose that $f:\H\to\C$ satisfies  $(\Delta_k - \lambda)^m f(z) =0$.
Then we have 
\[
	(\Delta_{2-k} -\overline{\lambda} )^m (\xi_k f) =0.
\]
\end{lemma}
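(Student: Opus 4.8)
The plan is to reduce everything to a single operator intertwining identity and then iterate it. The starting point is the factorization $\Delta_k = \xi_{2-k}\xi_k$ from \eqref{eq:deltak-xik}. Applying that same relation with $k$ replaced by $2-k$ gives $\Delta_{2-k} = \xi_k\xi_{2-k}$, and combining the two yields the purely algebraic identity
\[
	\xi_k\,\Delta_k = \xi_k\,\xi_{2-k}\,\xi_k = \Delta_{2-k}\,\xi_k
\]
as operators on sufficiently differentiable functions on $\H$. Thus $\xi_k$ intertwines $\Delta_k$ with $\Delta_{2-k}$, with no analysis required beyond the factorization already in hand.

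Next I would incorporate the scalar $\lambda$, and here is the one genuinely delicate point: the operator $\xi_k = 2iy^k\overline{\partial/\partial\bar z}$ is \emph{conjugate-linear}, since the outer complex conjugation forces $\xi_k(cg) = \overline{c}\,\xi_k g$ for every constant $c\in\C$. Consequently, when $\xi_k$ is pushed past the multiplication operator $\lambda I$ the eigenvalue is conjugated, giving
\[
	\xi_k(\Delta_k - \lambda)g = \Delta_{2-k}\,\xi_k g - \overline{\lambda}\,\xi_k g = (\Delta_{2-k} - \overline{\lambda})\,\xi_k g
\]
for any admissible $g$; that is, $\xi_k(\Delta_k-\lambda) = (\Delta_{2-k}-\overline{\lambda})\,\xi_k$ as operators.

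Finally I would iterate this base identity by induction on the power $m$. Since $(\Delta_k-\lambda)^{m-1}$ is $\C$-linear while $\xi_k$ is conjugate-linear, the conjugation of $\lambda$ occurs exactly once at each step, and a short induction gives $\xi_k(\Delta_k-\lambda)^m = (\Delta_{2-k}-\overline{\lambda})^m\,\xi_k$. Applying both sides to an $f$ with $(\Delta_k-\lambda)^m f = 0$ then yields $(\Delta_{2-k}-\overline{\lambda})^m(\xi_k f) = \xi_k\big((\Delta_k-\lambda)^m f\big) = 0$, which is exactly the claim.

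The main obstacle, such as it is, is bookkeeping rather than analytic: one must track the conjugate-linearity of $\xi_k$ carefully so that exactly one complex conjugation is applied to $\lambda$ (and not $m$ of them, nor none), which is precisely the source of the $\overline{\lambda}$ in the target space. A secondary point worth a remark is that all the compositions above are legitimate because functions annihilated by $(\Delta_k-\lambda)^m$ are real-analytic by elliptic regularity, hence smooth enough for $\xi_k$, $\Delta_{2-k}$, and their iterates to be applied freely.
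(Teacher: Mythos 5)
Your proof is correct and follows essentially the same route as the paper's: both rest on the factorization $\Delta_k = \xi_{2-k}\xi_k$ (hence $\Delta_{2-k}=\xi_k\xi_{2-k}$), the conjugate-linearity relation $\xi_k\lambda = \overline{\lambda}\,\xi_k$, and iteration of the resulting intertwining identity $\xi_k(\Delta_k-\lambda)=(\Delta_{2-k}-\overline{\lambda})\xi_k$. Your closing remark on elliptic regularity is a harmless extra observation not needed in the paper's setting, where the forms are assumed smooth.
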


\begin{proof} 
By \eqref{eq:deltak-xik} and the fact that $\xi_k\lambda=\bar\lambda\xi_k$ we have
\begin{equation*}
	(\Delta_{2-k}-\bar\lambda) \xi_k = (\xi_k \xi_{2-k})\xi_k - \bar\lambda \xi_k = \xi_k (\xi_{2-k}\xi_k) - \xi_k \lambda  = \xi_k(\Delta_k - \lambda).
\end{equation*}
Iterating this, we find that
\begin{equation*}
	(\Delta_{2-k} -\overline{\lambda} )^m (\xi_k f) = \xi_k (\Delta_k - \lambda)^mf = 0. \qedhere
\end{equation*}
\end{proof}

\begin{lemma}\label{le54}
Let $u_{k,n}^{[m],-}(y;s_0)$ be as in \eqref{eq:u-neg-def}.
Then
\begin{equation}
	\xi_k \left( u_{k,n}^{[m],-}(y;s_0) e^{2\pi i n x} \right) = u_{2-k,-n}^{[m],-}(y;\overline{s_0}) e^{-2\pi i n x} \times 
	\begin{cases}
		\overline{s_0}(1-k-\overline{s_0}) & \text{ if }n>0, \\
		-1 & \text{ if }n<0.
	\end{cases}
\end{equation}
\end{lemma}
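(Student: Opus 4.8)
The plan is to derive the formula from the already-established action of the Maass lowering operator $L_k$ (Lemma~\ref{lem:Rk-Lk-u}, equation~\eqref{eq:Lk-u}) by exploiting the relation $\xi_k = -y^{k-2}\overline{L_k}$ recorded in \eqref{eq:xik-Lk}. As in the proof of Lemma~\ref{lem:Rk-Lk-u}, I would first reduce to the case $m=0$: since $\partial/\partial z$ and $\partial/\partial\bar z$ commute with $\partial/\partial s$, the operator $\xi_k$ commutes with differentiation in the spectral parameter, so once the identity is verified for $u_{k,n}^{[0],-}(y;s_0)$ it transports to general $m$ by differentiating in $s$. For the base case I would apply \eqref{eq:Lk-u} to compute $L_k\big(u_{k,n}^{[0],-}(y;s_0)e^{2\pi inx}\big)$, obtaining a scalar multiple of $u_{k-2,n}^{[0],-}(y;s_0+1)e^{2\pi inx}$, and then apply $-y^{k-2}$ and complex conjugation.

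The key analytic input is the reality of the Whittaker $W$-function in its second index: since $W_{\kappa,\mu}(x)$ is entire in $\mu$ (Section~\ref{subsec:A1}) and real-valued for real $\kappa,\mu$ and real $x>0$, Schwarz reflection gives $\overline{W_{\kappa,\mu}(x)} = W_{\kappa,\overline{\mu}}(x)$ whenever $\kappa$ and $x$ are real. Because $\kappa = \operatorname{sgn}(n)\tfrac k2$ and the argument $4\pi|n|y$ are real, conjugation of the $L_k$-output simply replaces $s_0$ by $\overline{s_0}$ throughout the Whittaker index and conjugates the scalar factor. For $n>0$ the scalar becomes $-\overline{s_0(s_0+k-1)} = \overline{s_0}(1-k-\overline{s_0})$, and for $n<0$ it becomes $-1$, matching the two cases in the statement; the $y$-powers match because $y^{k-2}\cdot y^{-(k-2)/2} = y^{(k-2)/2} = y^{-(2-k)/2}$, which is exactly the prefactor in $u_{2-k,-n}^{[m],-}$.

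The final step is to recognize the resulting conjugated, reweighted function as a genuine $u_{2-k,-n}^{[m],-}$. Here I would combine the index flip $n\mapsto -n$, which sends $\operatorname{sgn}(n)\tfrac{k-2}{2}$ to $\operatorname{sgn}(-n)\tfrac{2-k}{2}$ (the same first Whittaker parameter) and the weight change $k-2\mapsto 2-k$, with the reflection symmetry $W_{\kappa,\mu}(x)=W_{\kappa,-\mu}(x)$, which is needed to convert the second index into the normalized form $s+\tfrac{1-k}{2}$ demanded by the definition~\eqref{eq:u-neg-def} at weight $2-k$. Matching these conventions identifies the answer with a multiple of $u_{2-k,-n}^{[m],-}(y;\overline{s_0})e^{-2\pi inx}$.

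I expect the main obstacle to be precisely this bookkeeping in the last two steps: tracking how conjugation turns the $s_0$-dependence into $\overline{s_0}$-dependence inside the $s$-derivatives that define $u^{[m],-}$, and deciding on which side the symmetry $W_{\kappa,\mu}=W_{\kappa,-\mu}$ must be invoked so that the normalized parameter and the scalar prefactor come out consistently in both the $n>0$ and $n<0$ cases. The one genuinely nontrivial analytic fact to isolate and justify cleanly is the $\mu$-reality of $W_{\kappa,\mu}$ and its compatibility with the parameter-derivatives; everything else is a careful but mechanical chain of substitutions built on \eqref{eq:xik-Lk} and \eqref{eq:Lk-u}.
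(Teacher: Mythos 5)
Your route is the same as the paper's: its proof of Lemma~\ref{le54} is exactly the combination of Lemma~\ref{lem:Rk-Lk-u}, the relation $\xi_k=-y^{k-2}\overline{L_k}$, and the two Whittaker symmetries $W_{\kappa,\mu}=W_{\kappa,-\mu}$ and $\overline{W_{\kappa,\mu}(y)}=W_{\kappa,\bar\mu}(y)$, and your Schwarz-reflection justification of the conjugation identity (entirety in $\mu$ plus reality for real $\kappa,\mu,y$) is a legitimate alternative to the paper's appeal to the integral representation \cite[(13.16.5)]{NIST} and analytic continuation. Your scalar factors and $y$-powers are also correct.

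The gap is in the step you defer as ``bookkeeping,'' which is where all the content lies and which does not come out as you assert. Carry it out: by \eqref{eq:Lk-u}, $L_k\bigl(u_{k,n}^{[m],-}(y;s_0)e^{2\pi inx}\bigr)$ is a constant multiple of $u_{k-2,n}^{[m],-}(y;s_0+1)e^{2\pi inx}$, whose second Whittaker index is $(s_0+1)+\tfrac{(k-2)-1}{2}=s_0+\tfrac{k-1}{2}$; conjugation replaces this by $\bar s_0+\tfrac{k-1}{2}$. To recognize the result as $u_{2-k,-n}^{[m],-}(y;\sigma)$, whose index is $\sigma+\tfrac{(2-k)-1}{2}=\sigma+\tfrac{1-k}{2}$, you must solve $\sigma+\tfrac{1-k}{2}=\pm\bigl(\bar s_0+\tfrac{k-1}{2}\bigr)$, which gives $\sigma=\overline{s_0}+k-1$ (no reflection) or $\sigma=-\overline{s_0}$ (with reflection) --- never $\sigma=\overline{s_0}$. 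That $\sigma=\overline{s_0}$ is impossible can be seen a priori: by Lemma~\ref{lem:EigShift} the function $\xi_k\bigl(u_{k,n}^{[m],-}(y;s_0)e^{2\pi inx}\bigr)$ is annihilated by a power of $\Delta_{2-k}-\overline{s_0}(\overline{s_0}+k-1)$, whereas $u_{2-k,-n}^{[m],-}(y;\overline{s_0})e^{-2\pi inx}$ is annihilated by a power of $\Delta_{2-k}-\overline{s_0}(\overline{s_0}+1-k)$, and these eigenvalues agree only when $s_0=0$ (recall $k$ is even, so $k\neq 1$). What your argument actually proves is
\[
\xi_k \left( u_{k,n}^{[m],-}(y;s_0) e^{2\pi i n x} \right) = u_{2-k,-n}^{[m],-}\left(y;\overline{s_0}+k-1\right) e^{-2\pi i n x} \times
	\begin{cases}
		\overline{s_0}(1-k-\overline{s_0}) & \text{ if }n>0, \\
		-1 & \text{ if }n<0,
	\end{cases}
\]
equivalently $(-1)^m\,u_{2-k,-n}^{[m],-}(y;-\overline{s_0})$ times the same constants. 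This is consistent with how the lemma is used in Proposition~\ref{pr56} and Appendix~\ref{sec:AppC}, where the image of the $n$-th Fourier term of $\htE_k(z,s)$ is the $(-n)$-th term of $\htE_{2-k}(z,-\bar s)$: the output parameter there is $-\bar s$, not $\bar s$. In other words, the printed statement has a slip in the spectral parameter, and your write-up glosses over precisely the computation that would have exposed it.

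Two further details are hidden in that same bookkeeping. First, when you invoke $W_{\kappa,\mu}=W_{\kappa,-\mu}$ on the derivative family, the induced reflection in the $s$-variable at weight $2-k$ is $s\mapsto k-1-s$, so $m$-th derivatives acquire a factor $(-1)^m$; this is why the constants above go with the parameter $\overline{s_0}+k-1$, while the parameter $-\overline{s_0}$ carries the extra sign. Your plan never accounts for this. Second, the two branches of definition \eqref{eq:u-neg-def} must be matched: if $s_0=\tfrac{1-k}{2}$, then the correct output parameter $-\overline{s_0}=\tfrac{k-1}{2}=\tfrac{1-(2-k)}{2}$ is exactly the exceptional point at weight $2-k$ (so both sides use the doubled derivative count), whereas the printed parameter $\overline{s_0}=\tfrac{1-k}{2}$ is not exceptional at weight $2-k$ --- another symptom of the same error, and a case split your proof, like the paper's, never addresses.
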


\begin{proof}
This follows immediately from Lemma~\ref{lem:Rk-Lk-u} and \eqref{eq:xik-Lk}, together with the relations
\[
	W_{\kappa,\mu}(y) = W_{\kappa,-\mu}(y)
\]
and (for $\kappa,y\in\RR$)
\[
	\overline{W_{\kappa,\mu}(y)} = W_{\kappa,\bar\mu}(y).
\]
The latter relation follows from the integral representation \cite[(13.16.5)]{NIST} when $\re{\mu}+\frac 12>\re\kappa$ and by analytic continuation otherwise.
\end{proof}

\subsection{Action of $\xi_k$ on non-holomorphic Eisenstein series}\label{sec:Xi-action2}

We compute the action of the $\xi_k$-operator on 
the completed and doubly-completed non-holomorphic Eisenstein series.

\begin{proposition}\label{pr56}
Let $k \in 2 \ZZ$. Then
\[
\xi_k \htE_k(z, s) = 
	\begin{cases}
	\htE_{2-k}(z, -\overline{s}) & \mbox{if} \quad k \le 0,\\
	\overline{s}(\overline{s} + k -1 ) \htE_{2-k}(z, -\overline{s}) & \mbox{if} \quad k \ge 2.
	\end{cases}
\]
In addition
\[
\xi_k \dhtE_k(z, s) = 
	\begin{cases}
	\dhtE_{2-k}(z, -\overline{s}) & \mbox{if} \quad k \le 0,\\
	\overline{s}(\overline{s} + k -1 ) \dhtE_{2-k}(z, -\overline{s}) & \mbox{if} \quad k \ge 2.
	\end{cases}
\]
\end{proposition}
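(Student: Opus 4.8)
The plan is to prove the identity for $\htE_k$ by applying $\xi_k$ term-by-term to the Fourier expansion of $\htE_k(z,s)$ given in Proposition~\ref{prop:FourierExpansionArbitrary}, and then to deduce the statement for $\dhtE_k$ from the scalar relation \eqref{Ek-dc-def}. First I would observe that, after absorbing the factor $y^{-k/2}$, the $n$-th non-constant Fourier coefficient of $\htE_k(z,s)$ is precisely a scalar multiple of $u_{k,n}^{[0],-}(y;s)$ as defined in \eqref{eq:u-neg-def}; writing that scalar as $A_n(s) = (-1)^{k/2}\Gamma(s+\frac k2+\frac{|k|}2)\,|n|^{-s-k/2}\sigma_{2s+k-1}(n)/\Gamma(s+\frac k2(1+\sgn{n}))$, we have $\htE_k(z,s) = C_0(y,s) + \sum_{n\neq 0} A_n(s)\,u_{k,n}^{[0],-}(y;s)\,e^{2\pi inx}$. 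Since $\xi_k$ is conjugate-linear and passes the $z$-independent scalars $A_n(s)$ through as $\overline{A_n(s)}$, and since the expansion converges locally uniformly (Theorem~\ref{lem:abstractFourier}, applied to $\htE_k(z,s)\in V_k^1(s(s+k-1))$), term-by-term application is legitimate, and the non-constant terms and the constant term can be treated separately.

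For the non-constant terms I would invoke Lemma~\ref{le54}, which sends $u_{k,n}^{[0],-}(y;s)e^{2\pi inx}$ to $u_{2-k,-n}^{[0],-}(y;\bar s)e^{-2\pi inx}$ times $\bar s(1-k-\bar s)$ for $n>0$ and $-1$ for $n<0$. After reindexing $n\mapsto -n$, the resulting series is built from the functions $u_{2-k,n}^{[0],-}(y;\bar s)e^{2\pi inx}$, which are exactly the non-constant building blocks of $\htE_{2-k}(z,-\bar s)$. Matching the coefficient of each such term against $A_n^{(2-k)}(-\bar s)$ is then a finite identity among Gamma factors, completed zeta values, and divisor sums, whose elementary inputs are $\overline{\sigma_w(n)}=\sigma_{\bar w}(n)$ together with $\sigma_w(-n)=\sigma_w(n)$, the divisor symmetry $\sigma_w(n)=|n|^{w}\sigma_{-w}(n)$ (needed to reconcile the powers of $|n|$), and $\overline{\htz(w)}=\htz(\bar w)$ with $\htz(w)=\htz(1-w)$. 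The sign of $k$ enters through the factor $\Gamma(s+\frac k2+\frac{|k|}2)$: since $|2-k|=k-2$ for $k\ge 2$ but $|2-k|=2-k$ for $k\le 0$, the ratio of the Gamma prefactors of $\htE_k$ and $\htE_{2-k}$ differs in the two ranges, and this mismatch is exactly what produces the scalar $\bar s(\bar s+k-1)$ when $k\ge 2$ and no scalar when $k\le 0$.

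For the constant term I would use the direct computation $\xi_k(y^\alpha)=\bar\alpha\,y^{\bar\alpha+k-1}$, which follows from $\partial y/\partial\bar z=i/2$ and $2i\cdot(-i/2)=1$. Applying this to the two monomials of $C_0(y,s)=a(s)y^s+b(s)y^{1-s-k}$ yields $\bar s\,\overline{a(s)}\,y^{\bar s+k-1}+(1-\bar s-k)\,\overline{b(s)}\,y^{-\bar s}$, and the exponents $\bar s+k-1$ and $-\bar s$ coincide with those of the constant term of $\htE_{2-k}(z,-\bar s)$, namely $y^{1-s'-k'}$ and $y^{s'}$ with $k'=2-k$, $s'=-\bar s$. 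Matching the two coefficients again uses the functional equation of $\htz$ and the reflection and duplication formulas for $\Gamma$, and reproduces the same case split by the same $|k|$-versus-$|2-k|$ mechanism. Since $\htE_0$ has poles and the constant-term formula degenerates at $s=\frac{1-k}{2}$, I would establish the identity first for generic $s$ and extend it to all $s$ by meromorphic continuation, which simultaneously disposes of the logarithmic special cases of Proposition~\ref{prop:37}.

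Finally, the doubly-completed statement follows formally. Because the prefactor $(s+\frac k2)(s+\frac k2-1)$ in \eqref{Ek-dc-def} is independent of $z$, conjugate-linearity gives $\xi_k\dhtE_k(z,s)=(\bar s+\frac k2)(\bar s+\frac k2-1)\,\xi_k\htE_k(z,s)$; substituting the $\htE_k$ result and using the identity $(\bar s+\frac k2)(\bar s+\frac k2-1)=(-\bar s+1-\frac k2)(-\bar s-\frac k2)$, which is precisely the factor converting $\htE_{2-k}(z,-\bar s)$ into $\dhtE_{2-k}(z,-\bar s)$, yields the claimed formula in both weight ranges. I expect the main obstacle to be the coefficient bookkeeping of the Gamma and completed-zeta factors in the two cases — in particular, making the $|k|$-versus-$|2-k|$ discrepancy account cleanly for the scalar $\bar s(\bar s+k-1)$ — rather than any conceptual difficulty.
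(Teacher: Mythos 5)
Your proposal is correct and follows essentially the same route as the paper's own proof sketched in Appendix~\ref{sec:AppC}: term-by-term application of $\xi_k$ to the Fourier expansion, Lemma~\ref{le54} for the nonconstant terms, direct computation plus Gamma-function identities for the constant term, and conjugate-linearity of $\xi_k$ to pass from $\htE_k$ to $\dhtE_k$. One small point to watch: the parameter of the $u$-functions produced by $\xi_k$ should be $-\overline{s}$ (equivalently $\overline{s}+k-1$, via $W_{\kappa,\mu}=W_{\kappa,-\mu}$) rather than $\overline{s}$, which is exactly what makes them the building blocks of $\htE_{2-k}(z,-\overline{s})$ as your coefficient matching requires.
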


Proposition~\ref{pr56} is proved in \cite[Sect. 7]{LR15K}, which  applies $\xi_k$ directly to the
series expansion \eqref{Eis-k}. This proof is  initially justified  in
the half-plane  $\re{s}> 1- \frac{k}{2}$, then requires analytic continuation in the $s$-variable
to hold in general.  
We outline in Appendix~\ref{sec:AppC}
a second proof of Proposition~\ref{pr56} which is 
based on term-by-term calculation   of
the Fourier series, using Lemma~\ref{le54}. 
This alternate proof uses Whittaker function identities
and is longer, but  has the merit of working directly for all $s \in \CC$ since
the Fourier series converge absolutely and uniformly on compact
subsets (avoiding the poles for the case $k=0$).
Appendix \ref{sec:AppC} gives details of the calculation only for the constant term of the Fourier series.

The following proposition shows that the action of the $\xi_k$ operator on shifted polyharmonic vector spaces preserves the moderate growth condition.

\begin{proposition}\label{lem42}\label{lem:subspaceXi}
For every $m\geq 1$ we have
\[
\xi_k( V_k^m(\lambda)) \subseteq  V_{2-k}^m(\overline{\lambda}).
\]
If $\lambda\neq 0$, this map is an isomorphism.
\end{proposition}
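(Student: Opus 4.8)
The plan is to establish the inclusion $\xi_k(V_k^m(\lambda)) \subseteq V_{2-k}^m(\overline{\lambda})$ first, and then to prove the isomorphism claim when $\lambda \neq 0$ by a dimension-counting argument parallel to the one used for $R_k$ and $L_k$ in Proposition~\ref{prop:RkLk-iso}. For the inclusion, I would verify the three defining properties of $V_{2-k}^m(\overline{\lambda})$ for an element $\xi_k f$ with $f \in V_k^m(\lambda)$: weight-$(2-k)$ modularity, the polyharmonic eigenfunction condition, and moderate growth at the cusp. Modularity follows immediately from Lemma~\ref{lem:xi_commute}, which shows $\xi_k(f|_k\gamma) = (\xi_k f)|_{2-k}\gamma$. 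The polyharmonic condition $(\Delta_{2-k}-\overline{\lambda})^m(\xi_k f) = 0$ is precisely the content of Lemma~\ref{lem:EigShift}. So the substance of the inclusion is the moderate growth of $\xi_k f$.

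For the moderate growth, I would argue exactly as in the proof of Proposition~\ref{RLemma4}. Writing the Fourier expansion of $f$ from Theorem~\ref{lem:abstractFourier}, the same coefficient bound $c_{n,j}^- \ll |n|^A$ derived there applies. Then Lemma~\ref{le54} gives an explicit formula for $\xi_k$ acting on each Fourier mode $u_{k,n}^{[m],-}(y;s_0) e^{2\pi i nx}$, sending it to a scalar multiple of $u_{2-k,-n}^{[m],-}(y;\overline{s_0}) e^{-2\pi i nx}$, which again decays exponentially in $|n|y$ by the Whittaker asymptotics of Appendix~\ref{sec:AppA}. The constant-term modes contribute at most polynomial growth. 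Summing, one obtains a bound of the form $\xi_k f \ll y^B$, establishing moderate growth. This handles part one.

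For the isomorphism when $\lambda \neq 0$, I would use the relation $\Delta_k = \xi_{2-k}\xi_k$ from \eqref{eq:deltak-xik}, which is the $\xi$-analogue of the factorization $-\Delta_k = R_{k-2}L_k$ used in Lemma~\ref{lem:LkRk-iso}. The key observation is that $\xi_{2-k}\xi_k = \Delta_k$ acts on $V_k^m(\lambda)$ in a way controlled by $\Delta_k - \lambda$ being nilpotent of order $m$; since $\lambda \neq 0$, the operator $\Delta_k$ itself is invertible on $V_k^m(\lambda)$. I would first prove, by induction on $m$ identical in structure to Lemma~\ref{lem:LkRk-iso}, that $\xi_{2-k}\xi_k = \Delta_k : V_k^m(\lambda) \to V_k^m(\lambda)$ is surjective (hence an isomorphism, as the space is finite-dimensional by Theorem~\ref{thm:main1}); the induction step writes $(\Delta_k-\lambda)f = \Delta_k g$ for some $g$ in the lower-depth space and checks that $\Delta_k(f+g) = \lambda f$ up to sign, so that $\lambda f$ lies in the image. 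Applying the inclusion $\xi_k(V_k^m(\lambda)) \subseteq V_{2-k}^m(\overline{\lambda})$ together with the symmetric inclusion $\xi_{2-k}(V_{2-k}^m(\overline{\lambda})) \subseteq V_k^m(\lambda)$, the invertibility of both composites $\xi_{2-k}\xi_k$ and $\xi_k\xi_{2-k} = \Delta_{2-k}$ forces the dimensions of $V_k^m(\lambda)$ and $V_{2-k}^m(\overline{\lambda})$ to be equal and $\xi_k$ to be injective, hence an isomorphism.

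The main obstacle I anticipate is the isomorphism step rather than the inclusion. The inclusion is essentially a repackaging of Lemmas~\ref{lem:xi_commute}, \ref{lem:EigShift}, and \ref{le54} together with the growth argument already executed for $R_k$ and $L_k$. The delicate point in the isomorphism is that $\xi_k$ is conjugate-linear, so one must track the conjugation $\lambda \mapsto \overline{\lambda}$ carefully and confirm that the composite $\xi_{2-k}\xi_k = \Delta_k$ is genuinely complex-linear (it is) and that the scalar $\lambda$ on $V_k^m(\lambda)$, not $\overline{\lambda}$, governs invertibility. One must also confirm that the inductive surjectivity argument for $\Delta_k$ really does require only $\lambda \neq 0$ and not some stronger genericity hypothesis; this mirrors exactly the role played by the condition $\lambda \neq 0$ in Lemma~\ref{lem:LkRk-iso}(2), so I expect it to go through cleanly.
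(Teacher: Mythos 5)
Your proposal is correct, and while your inclusion step follows the paper's own skeleton (modularity from Lemma~\ref{lem:xi_commute}, the polyharmonic condition from Lemma~\ref{lem:EigShift}), your treatment of growth and of the isomorphism takes a genuinely different route. The paper's proof is a wholesale reduction to the Maass lowering operator: by \eqref{eq:xik-Lk} one has $\xi_k=-y^{k-2}\,\overline{L_k}$, and since complex conjugation and multiplication by the positive function $y^{k-2}$ are bijections preserving moderate growth, the inclusion follows from Proposition~\ref{RLemma4} and bijectivity for $\lambda\neq 0$ follows from Proposition~\ref{prop:RkLk-iso}(2); nothing further is computed. You instead (a) re-run the Fourier-coefficient estimate directly on $\xi_k f$ using Lemma~\ref{le54} and the bound $c_{n,j}^-\ll|n|^A$ --- substantively the same argument as the proof of Proposition~\ref{RLemma4}, duplicated rather than recycled --- and (b) obtain bijectivity from the factorizations $\Delta_k=\xi_{2-k}\xi_k$ and $\Delta_{2-k}=\xi_k\xi_{2-k}$ of \eqref{eq:deltak-xik}, using that $\Delta_k$ is invertible on $V_k^m(\lambda)$ precisely because $\Delta_k-\lambda$ is nilpotent there and $\lambda\neq 0$. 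Point (b) is exactly the content of Remark~\ref{rem69}, which the paper records but does not use in its proof; your version makes it load-bearing, stays entirely on the $\xi$-side (no appeal to the $R_k$, $L_k$ isomorphism theory), and yields an explicit two-sided inverse $\xi_{2-k}\circ\Delta_{2-k}^{-1}$, at the cost of re-proving an invertibility statement whose $L$-operator analogue is Lemma~\ref{lem:LkRk-iso}. Two refinements: your inductive surjectivity argument for $\Delta_k$ (and the appeal to finite-dimensionality, which is a forward reference to Theorem~\ref{thm:main1}) can be bypassed entirely, since with $N:=\Delta_k-\lambda$ nilpotent of order $\le m$ one has the finite Neumann series $\Delta_k^{-1}=\lambda^{-1}\sum_{j=0}^{m-1}(-N/\lambda)^j$; and your care with conjugate-linearity is warranted and correctly resolved --- both composites are honestly $\CC$-linear with spectrum $\{\lambda\}$ and $\{\bar\lambda\}$ respectively, and ``isomorphism'' here means conjugate-linear bijection, which is all the statement asserts.
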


\begin{proof} 
This result  follows from Lemmas~\ref{lem:xi_commute} and \ref{lem:EigShift}, and using the relation $\xi_k= y- y^{k-2}\overline{L_k}$, 
Proposition~\ref{RLemma4} (giving moderate growth), Proposition~\ref{prop:RkLk-iso} (2), and equation \eqref{eq:xik-Lk}.
\end{proof}

\begin{remark}\label{rem69}
On the space $V_k^m(\lambda)$
the operator  $\Delta_k = \xi_{2-k}\xi_k: V_{k}^{m} (\lambda) \to V_{k}^{m}(\lambda)$ 
has minimal polynomial dividing $(T- \lambda)^m$
and is invertible when $\lambda \ne 0$.
For such $\lambda$  we obtain  
a structure of towers for the $\Delta_k$-action, and
a ladder   in which the maps $\xi_k$ and $\xi_{2-k}$
together comprise the rungs.
Figure~\ref{fig:tower-ladder} depicts the tower and ladder structure of these maps.

For $\lambda=0$ the  action of  $\Delta_k$ on $V_k^m(0)$
is nilpotent,  yielding the  tower and ramp structure exhibited in \cite[Tables 1 and 2]{LR15K}.
 In the $\lambda=0$  case there always exist
weight $k$ real-analytic modular forms $f$ that do not have moderate growth at the cusp which
nevertheless have  the property that
$\xi_k f$ has moderate growth at the cusp. The simplest examples are the weakly holomorphic modular forms in $M_k^{!} \setminus M_k$
which have linear exponential growth at the cusp but
 are annihilated by $\xi_k$ since they are holomorphic functions. See \cite[Sect. 6]{LR15K}.
\end{remark}


\section{Shifted Polyharmonic Maass Forms from  Eisenstein Taylor Coefficients}\label{sec:TS}

We show that the Taylor series coefficients of the
doubly completed Eisenstein series $\dhtE_k(z, s)$
in the $s$-variable  at a point $s=s_0 \in \CC$ define shifted polyharmonic Maass forms
of eigenvalue $\lambda=s_0(s_0+k-1)$.
We consider doubly-completed Eisenstein series 
rather than the singly completed series $\htE_k(z, s)$ because they
are entire functions of $s$ for all $k \in 2\ZZ$; the series  $\htE_0(z, s)$
has simple poles at $s=0, 1$.

\subsection{Taylor series expansions for weight $k$ non-holomorphic Eisenstein series}\label{sec:new71}

The doubly-completed Eisenstein series has a Taylor series
expansion in the $s$-variable around any point $s_0 \in \CC$ given by
\[
\dhtE_k(z, s) = \sum_{n=0}^{\infty} \frac{1}{n!}\dhtE_k^{[n]}(z; s_0) (s- s_0)^n,
\]
in which the Taylor coefficients
\begin{equation}\label{Taylor-coeff0}
\dhtE_k^{[n]}(z; s_0) := \frac{\partial^n}{\partial s^n} \dhtE_k(z, s) \big|_{s=s_0}
\end{equation}
are viewed as functions 
of $z \in \HH$
 to $\CC$.

\begin{theorem}\label{thm:71}
{\rm (Taylor Series Recursion)}
Fix $s_0 \in \CC$ and set $\lambda = s_0(s_0+k-1)$.

\begin{enumerate}
\item
The functions $\dhtE_k^{[n]}(z; s_0)$ 
given by \eqref{Taylor-coeff0}
obey a recursion
\begin{equation}\label{new-recursion}
(\Delta_k - \lambda) \dhtE_k^{[n]}(z; s_0)=
n(2s_0+k-1)\dhtE_k^{[n-1]}(z; s_0) +  n(n-1)\dhtE_k^{[n-2]}(z; s_0),
\end{equation}
in which any terms on the right are omitted whenever 
their superscript $[m]$ has $m <0$.

\item
One has 
$\dhtE_k^{[n]}(z; s_0) \in V_{k}^{n+1}(\lambda)$ for each $n \ge 0$.
That is, $\dhtE_k^{[n]}(z; s_0)$ is a  shifted polyharmonic function
for $\Delta_k$ with (shifted) harmonic depth  at most $n+1$,
with eigenvalue $\lambda$ and moderate growth at the cusp.
\end{enumerate}
\end{theorem}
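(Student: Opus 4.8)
The plan is to obtain the recursion in (1) directly from the generalized eigenvalue equation for the doubly-completed Eisenstein series, and then bootstrap it to the polyharmonic statement in (2) by induction on $n$, handling the membership conditions at the end.

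First I would record the basic input from Theorem~\ref{th38}(3): for every $s\in\CC$ one has the identity (in $z$)
\[
	\Delta_k \dhtE_k(z, s) = \mu(s)\, \dhtE_k(z, s), \qquad \mu(s) := s(s+k-1).
\]
Since $\Delta_k$ differentiates only in $x$ and $y$, its coefficients are independent of $s$, so $\Delta_k$ commutes with $\partial/\partial s$. Applying $\partial^n/\partial s^n$ to both sides therefore pushes $\Delta_k$ past the $s$-derivative on the left, while on the right I apply the Leibniz rule. Because $\mu$ is quadratic, only $\mu(s)$, $\mu'(s)=2s+k-1$, and $\mu''(s)=2$ survive, and the Leibniz sum collapses to three terms:
\[
	\Delta_k \dhtE_k^{[n]}(z; s) = \mu(s)\, \dhtE_k^{[n]}(z; s) + n\,\mu'(s)\, \dhtE_k^{[n-1]}(z; s) + n(n-1)\, \dhtE_k^{[n-2]}(z; s).
\]
Evaluating at $s=s_0$, so that $\mu(s_0)=\lambda$ and $\mu'(s_0)=2s_0+k-1$, and transposing the $\lambda$-term yields exactly the recursion \eqref{new-recursion}. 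The coefficients $n\mu'$ and $n(n-1)$ vanish precisely when the offending superscript would be negative, which accounts for the stated omission convention. This step is a routine Leibniz computation and I expect no difficulty.

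For (2) I would first establish the polyharmonic depth by induction on $n$. The base case $n=0$ is the eigenvalue equation itself, $(\Delta_k-\lambda)\dhtE_k^{[0]}(z; s_0)=0$. For the inductive step I apply $(\Delta_k-\lambda)^n$ to \eqref{new-recursion}; by the induction hypothesis $(\Delta_k-\lambda)^{m+1}$ annihilates $\dhtE_k^{[m]}(z; s_0)$ for each $m<n$, so both terms on the right (multiples of $\dhtE_k^{[n-1]}$ and $\dhtE_k^{[n-2]}$) are killed, giving $(\Delta_k-\lambda)^{n+1}\dhtE_k^{[n]}(z; s_0)=0$, i.e. harmonic depth at most $n+1$.

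It remains to check the remaining defining properties of $V_k^{n+1}(\lambda)$. Weight-$k$ modularity and real-analyticity in $z$ are inherited from $\dhtE_k(z,s)$ for each fixed $s$ and are preserved under $\partial/\partial s$, so these are immediate. The moderate growth is the one genuinely substantive point, and I would treat it as the main obstacle. The cleanest route is through the Fourier expansion of Proposition~\ref{prop:FourierExpansionArbitrary}: each nonconstant Fourier coefficient of $\dhtE_k(z,s)$ is an $s$-analytic multiple of a Whittaker $W$-function, so its $n$-th $s$-derivative at $s_0$ is a linear combination of the functions $u_{k,n}^{[j],-}(y;s_0)$ of \eqref{eq:u-neg-def}, which decay at the cusp by the asymptotics of Corollary~\ref{cor:lin-comb-asy} (equivalently, no $\sM^{+}$-type terms occur, cf. Theorem~\ref{thm:45}). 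The constant term $C_0(y,s)$ is a combination of $y^{s}$ and $y^{1-s-k}$ with entire coefficients, whose $s$-derivatives contribute only powers of $\log y$ against these, hence at most polynomial growth. The one technical point to justify carefully is the interchange of $\partial^n/\partial s^n$ with the Fourier series, which is legitimate because the expansion converges locally uniformly in $(z,s)$. Together these show $\dhtE_k^{[n]}(z;s_0)$ has moderate growth, completing the verification that it lies in $V_k^{n+1}(\lambda)$.
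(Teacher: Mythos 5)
Your proposal is correct and follows essentially the same route as the paper: both rest on commuting $\Delta_k$ with $\frac{\partial}{\partial s}$ in the eigenvalue identity $\Delta_k \dhtE_k(z,s) = s(s+k-1)\dhtE_k(z,s)$ to get the recursion, and both verify membership in $V_k^{n+1}(\lambda)$ by the same induction on depth together with moderate growth read off from the Fourier expansion of Proposition~\ref{prop:FourierExpansionArbitrary} and the exponential decay of the $u_{k,n}^{[j],-}$ functions. The only organizational difference is that you obtain the recursion in one stroke via the Leibniz rule (exploiting that $s(s+k-1)$ is quadratic in $s$), whereas the paper derives it by induction on $n$; the underlying computation is identical.
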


\begin{proof}
(1)
The  $n=0$ case of \eqref{new-recursion} asserts that
\[
	\Delta_k \dhtE_{k}^{[0]}(z; s_0) = s_0(s_0+k-1) \dhtE_k^{[0]}(z; s_0),
\]
which follows from  Theorem \ref{th38} (3).
For  $n=1$ we  have
\begin{align*}
\Delta_k \dhtE_k^{[1]}(z; s_0)  &= \Delta_k \left(\frac{\partial}{\partial s} \dhtE_k(z, s) \big|_{s=s_0}\right)= 
\frac{\partial}{\partial s} \left[ \Delta_k \dhtE_k(z, s) \right]_{s=s_0}  \\
&= \frac{\partial}{\partial s} \left[ s (s+k-1) \dhtE_k(z, s) \right]_{s=s_0}  \\
&=  s_0(s_0+k-1) \dhtE_k^{[1]}(z; s_0) + (2s_0+k-1) \dhtE_0^{[0]}(z; s_0),
\end{align*}
as required.
The cases $n \ge 2$ are proved by induction on $n$. 
For the induction step, assuming \eqref{new-recursion} holds for $n-1$ and $n-2$, we observe
that
\begin{align*}
\Delta_k \dhtE_k^{[n]}(z; s_0) &=
\frac{\partial}{\partial s}\left[ \Delta_k \dhtE_k^{[n-1]}(z; s) \right]_{s=s_0} \\
&= \frac{\partial}{\partial s} \left[ s (s+k-1) \dhtE_k^{[n-1]}(z; s)+ (n-1) (2s+k-1) \dhtE_k^{[n-2]}(z; s) \right. \\
& \hspace{2.9in}\left. + (n-1)(n-2) \dhtE_k^{[n-3]}(z; s)\right]_{s=s_0}\\
&= \lambda \dhtE_k^{[n]}(z; s_0) + 
n (2s_0+k-1)\dhtE_k^{[n-1]}(z; s_0) +
 n(n-1) \dhtE_k^{[n-2]}(z; s_0),
\end{align*}
which verifies \eqref{new-recursion} for $n$.

(2)
The Taylor coefficients $\dhtE_k^{[n]}(z; s_0)$ 
 inherit the property of transforming as weight $k$ Maass forms
from  $\dhtE_k(z, s)$. 
The recursion \eqref{new-recursion} for $n=0$ states that $\dhtE_k^{[0]}(z; s_0)$ is
shifted polyharmonic with eigenvalue $\lambda$ of  shifted harmonic depth at most $1$. 
By induction on $n$ this recursion establishes 
that $\dhtE_k^{[n]}(z; s_0)$ is shifted polyharmonic of  depth at most $n+1$.
Finally, by Proposition~\ref{prop:FourierExpansionArbitrary} and \eqref{Taylor-coeff0}, the functions 
$\dhtE_k^{[n]}(z; s_0)$ are of moderate growth at the cusp since, aside from the constant term, their Fourier expansions only involve the functions $u_{k,n}^{[m],-}(y;s_0)$, which decay exponentially as $y\to\infty$.
We conclude that  $\dhtE^{[n]}(z; s_0) $ belongs to $V_k^{n+1}(\lambda)$.
\end{proof}

\begin{rem}\label{rem:72}
In the special case  of the central point $s_0= \frac{1-k}{2}$ of the
functional equation,
where  $\lambda= -(\frac{1-k}{2})^2$, the recursion \eqref{new-recursion}
degenerates to 
\begin{equation}\label{central-pt}
\left(\Delta_k + \left(\tfrac{1-k}{2}\right)^2\right) \dhtE_k^{[n]}\left(z; \tfrac{1-k}{2}\right) = n(n-1) \dhtE_k^{[n-2]} \left(z; \tfrac{1-k}{2}\right).
\end{equation}
The functional equation 
$\dhtE_k(z; s) = \dhtE_k(z; 1-k-s)$ implies that  $\dhtE_k(z; \frac{1-k}{2} +s_1)$ is an even function of $s_1$,
so its odd-indexed Taylor coefficients at $s_1= 0$ vanish identically.  
\end{rem}

\subsection{Shifted polyharmonic Eisenstein series vector spaces}\label{sec:new72}

We define vector spaces of shifted polyharmonic Maass forms spanned
by   Taylor coefficients of non-holomorphic Eisenstein series.

\begin{definition}\label{defn:73}
The  {\em shifted $m$-harmonic Eisenstein space} $E_k^m(\lambda)$
is  the vector space generated by all the Taylor coefficient functions
$\dhtE_k^{[n]}(z; s_0)$ for those $n\geq 0$ such that $\dhtE_k^{[n]}(z; s_0) \in V_k^{m}(\lambda)$.
\end{definition}

As the following theorem shows, these spaces are  finite-dimensional and are generated by all the Taylor coefficient functions
up to some depth $n_0$ depending on $m$ and $\lambda$.

\begin{theorem}\label{thm:74}
Let $k \in 2\ZZ.$  Then for all  $\lambda \in \CC$ the shifted $m$-harmonic Eisenstein
space $E_k^m(\lambda)$ has dimension $m$. In addition, letting $\lambda= s_0(s_0+k-1)$,  we have
\begin{enumerate}
\item If $\lambda\notin \{\frac k2(1-\frac k2), -(\frac {1-k}2)^2\}$ then
there are two choices for $s_0$. For each choice we have $\dhtE_k^{[0]}(z; s_0) \not\equiv 0$,
and a  basis of $E_k^m(\lambda)$ is given by
\[
	\left\{ \dhtE_k^{[n]}(z; s_0): \, 0 \leq n \leq m-1\right\}.
\]
\item If $\lambda = \frac k2(1-\frac k2)$, so that $s_0=-\frac k2$ or $s_0=1-\frac k2$, then
for all $k \ne 0$,  we have $\dhtE_k^{[0]}(z; s_0)\equiv 0$,
and a basis of 
$E_k^m(\lambda)$ is given by 
\[
\left\{ \dhtE_k^{[n]}(z; s_0): 1 \leq n \leq m\right\}.
\]
If $\lambda=k=0$, we have $\dhtE_k^{[0]}(z; s_0) \not\equiv 0$, and a 
basis of $E_k^m(\lambda)$ is given by
\[
\left\{ \dhtE_k^{[n]}(z; s_0): \, 0 \leq n \leq m-1\right\}.
\]
\item If $\lambda = -(\frac{1-k}{2})^2$ so that $s_0= \frac{1-k}{2}$ is unique, a
basis of $E_k^m(\lambda)$ is given by
\[
\left\{ \dhtE_{k}^{[2n]}(z; s_0): 0 \leq n \leq m-1\right\}.
\]
 
\end{enumerate}
\end{theorem}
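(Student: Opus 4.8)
The plan is to read off the \emph{exact} harmonic depth of each Taylor coefficient $\dhtE_k^{[n]}(z;s_0)$ directly from the recursion of Theorem~\ref{thm:71} and then to count. Writing $D:=\Delta_k-\lambda$ and $c:=2s_0+k-1$, the recursion \eqref{new-recursion} reads $D\,\dhtE_k^{[n]}=nc\,\dhtE_k^{[n-1]}+n(n-1)\,\dhtE_k^{[n-2]}$, so $D$ lowers the Taylor index by $1$ or $2$. The entire argument is organized by two scalars: the number $c$, which vanishes precisely when $s_0=\frac{1-k}{2}$ (equivalently $\lambda=-(\frac{1-k}2)^2$, case (3)), and the bottom coefficient $\dhtE_k^{[0]}(z;s_0)$, whose vanishing will single out case (2).

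First I would pin down the depth when $c\neq 0$. Since each application of $D$ lowers the index by at least $1$, after $n$ steps only the index-$0$ term can survive, and it is reached only along the all-``lower-by-one'' path, of weight $\prod_{i=1}^{n}(ic)=n!\,c^{n}$; hence $D^{n}\dhtE_k^{[n]}=n!\,c^{n}\,\dhtE_k^{[0]}$. Combined with the a priori bound $\dhtE_k^{[n]}\in V_k^{n+1}(\lambda)$ from Theorem~\ref{thm:71}(2), this shows that whenever $\dhtE_k^{[0]}\not\equiv 0$ the function $\dhtE_k^{[n]}$ has depth exactly $n+1$; applying $D^{m-1},D^{m-2},\dots$ to a vanishing linear combination then forces all coefficients to vanish, giving linear independence. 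The same bookkeeping, now terminating at the bottom element $\dhtE_k^{[1]}$, shows that when $\dhtE_k^{[0]}\equiv 0$ but $c\neq 0$ one has $D^{n}\dhtE_k^{[n+1]}=(n+1)!\,c^{n}\,\dhtE_k^{[1]}$, so $\dhtE_k^{[j]}$ has depth exactly $j$ for $j\ge 1$. In the degenerate case $c=0$ the recursion collapses to $D\,\dhtE_k^{[n]}=n(n-1)\dhtE_k^{[n-2]}$, the odd-index coefficients vanish identically by the evenness noted in Remark~\ref{rem:72}, and $D^{n}\dhtE_k^{[2n]}=(2n)!\,\dhtE_k^{[0]}$, so $\dhtE_k^{[2n]}$ has depth exactly $n+1$. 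Because $V_k^d(\lambda)\subseteq V_k^{m}(\lambda)$ for $d\le m$, membership in $V_k^m(\lambda)$ is equivalent to depth $\le m$, and in each regime the nonzero generators allowed by Definition~\ref{defn:73} are exactly the claimed $m$ functions.

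The step I expect to be the crux is deciding exactly when $\dhtE_k^{[0]}(z;s_0)=\dhtE_k(z;s_0)$ vanishes. For this I would use Proposition~\ref{prop:FourierExpansionArbitrary}: after multiplying by the doubly-completing factor $(s+\frac k2)(s+\frac k2-1)$, the nonconstant coefficient of index $n$ carries the scalar $(-1)^{k/2}(s_0+\frac k2)(s_0+\frac k2-1)\,\Gamma(s_0+\frac k2+\frac{|k|}2)/\Gamma(s_0+\frac k2(1+\sgn{n}))$. Taking $n=1$ when $k\ge 0$ and $n=-1$ when $k\le 0$ makes the gamma-ratio equal to $1$, so the surviving scalar is just $(-1)^{k/2}(s_0+\frac k2)(s_0+\frac k2-1)$; since $\sigma_{2s_0+k-1}(\pm 1)=1$ and the relevant $W$-Whittaker function is not identically zero, this single coefficient is nonzero precisely when $s_0\notin\{-\frac k2,\,1-\frac k2\}$. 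Thus $\dhtE_k^{[0]}\not\equiv 0$ in cases (1) and (3); and in case (2) with $k\neq 0$, where $\htE_k$ is entire, the vanishing prefactor forces $\dhtE_k^{[0]}=(s_0+\frac k2)(s_0+\frac k2-1)\htE_k(z,s_0)\equiv 0$, while $\dhtE_k^{[1]}=c\,\htE_k(z,s_0)\not\equiv 0$ by the same coefficient computation. The one genuinely exceptional point is $\lambda=k=0$: there the zeros of the prefactor $s(s-1)$ are consumed in cancelling the simple poles of $\htE_0$ at $s=0,1$ (Theorem~\ref{th38}(1)), so $\dhtE_0^{[0]}$ is a nonzero constant and the count reverts to the case-(1) pattern.

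Finally I would record that $E_k^m(\lambda)$ is independent of the choice of root $s_0$ of $\lambda=s_0(s_0+k-1)$: the two roots are interchanged by $s\mapsto 1-k-s$, and the functional equation $\dhtE_k(z,s)=\dhtE_k(z,1-k-s)$ of Theorem~\ref{th38}(2) gives $\dhtE_k^{[n]}(z;1-k-s_0)=(-1)^n\dhtE_k^{[n]}(z;s_0)$, so the two families span the same space. Assembling the depth count of the second paragraph with the vanishing analysis of the third then yields $\dim E_k^m(\lambda)=m$ together with the stated basis in each of the three cases.
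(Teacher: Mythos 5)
Your proof is correct, and its skeleton coincides with the paper's: the recursion of Theorem~\ref{thm:71} pins down exact harmonic depths, linear independence follows from the strictly increasing depths, and the functional equation of Theorem~\ref{th38}(2) identifies the spaces attached to the two roots $s_0$. Your closed-form path-counting identities (with $D=\Delta_k-\lambda$ and $c=2s_0+k-1$), such as $D^{n}\dhtE_k^{[n]}(z;s_0)=n!\,c^{n}\,\dhtE_k^{[0]}(z;s_0)$ and its variants, are a quantitative repackaging of the induction the paper runs, so that part is the same argument in different clothing. The one genuinely different step --- and, as you say, the crux --- is the nonvanishing certificate. The paper argues from the Fourier \emph{constant} term via Proposition~\ref{prop:37}, asserting that the polynomial factors in front of $y^{s_0}$ and $y^{1-k-s_0}$ have no common roots; as stated this is incomplete, because those coefficients also carry the factors $\htz(2s_0+k)$ and $\htz(2-2s_0-k)=\htz(2s_0+k-1)$, so one must additionally rule out simultaneous vanishing coming from zeros of $\htz$ (which one can: two points differing by $1$ cannot both be nontrivial zeros of $\zeta$, since the critical strip has width $1$, and the polynomial roots are real while nontrivial zeros are not). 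Your certificate instead isolates the Fourier coefficient of index $n=1$ (for $k\ge 0$) or $n=-1$ (for $k\le 0$) in Proposition~\ref{prop:FourierExpansionArbitrary}, where $\sigma_{2s_0+k-1}(\pm 1)=1$, the gamma ratio is identically $1$, and $W_{\kappa,\mu}$ is never the zero function, so everything reduces to the visible polynomial $\bigl(s_0+\frac{k}{2}\bigr)\bigl(s_0+\frac{k}{2}-1\bigr)$. This buys robustness: it avoids zeta zeros and poles entirely, produces the trichotomy of cases from one computation, and leaves only the point $\lambda=k=0$ to be handled through the residues of $\htE_0$ at $s=0,1$, exactly as in the paper.
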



\begin{proof}
It is easy to check via the functional equation relating $s$ to $1-k-s$
that the two values of $s_0$ with $\lambda= s_0(s_0+k-1)$
correspond to the same spaces $E_k^{m}(\lambda)$ for all $m$, so
we may fix one such value.

(1) Suppose that  $\lambda \notin \{-(\frac{1-k}{2})^2,\frac{k}{2}(1-\frac{k}{2})\}$.
It suffices to  show that
\begin{equation} \label{eq:E-k-induction}
	\dhtE_k^{[n]}(z;s_0) \in V_k^{n+1} (\lambda) \setminus V_k^n(\lambda)
\end{equation}
holds for all $n \ge 0$, taking $V_k^0(\lambda) = \{0\}.$ We proceed by induction on $n \ge 0$.
By Proposition \ref{prop:37}, the Fourier
constant term of $\dhtE_k^{[0]}(z;s_0)$ is nonzero since the $s$-polynomial factors in front of the terms $y^{s_0}$ and $y^{1-k-s_0}$ have no common roots.
Hence $\dhtE_k^{[0]}(z;s_0) \not\equiv 0$, which verifies the $n=0$ case of \eqref{eq:E-k-induction}.
By hypothesis $2s_0+k-1\neq 0$, so
the recursion \eqref{new-recursion}
certifies the induction step.

(2) Suppose $\lambda = \frac{k}{2}(1-\frac{k}{2})$ and that $k\neq 0$, so  $s_0=-\frac{k}{2}$ or $1-\frac{k}{2}$ are nonzero. 
We  can see directly from Theorem \ref{th38} that
the completed Eisenstein series $\htE_k(z, s)$ is an entire function,
hence 
\[
	\dhtE_k(z, s_0) = (s_0+\tfrac{k}{2})(s_0+\tfrac{k}{2}-1) \htE_k(z, s_0) = 0,
\]
so $\dhtE_k^{[0]}(z; s_0) \equiv 0$.
Now 
\[
	\dhtE_k^{[1]}(z; s_0) = \frac{\partial}{\partial s} \left[ (s+\tfrac k2)(s+\tfrac k2-1)\htE_k(z,s) \right]_{s=s_0} = \pm \, \htE_k(z,s_0), 
\]
and by Proposition~\ref{prop:37} we see that the Fourier constant term of $\dhtE_k^{[1]}(z; s_0) \not\equiv 0$.
Again $2s_0+k-1\neq 0$, so it  follows by induction on $m \ge 1$ using the recursion  \eqref{new-recursion} 
that $\left\{ \dhtE_k^{[n]}(z; s_0) : 1 \leq n \leq m \right\}$ is a basis of $E_k^m(\lambda)$.

For $\lambda=k=0$ and $s_0\in\{0, 1\}$, Theorem \ref{th38} shows $\htE_k(z;  s)$
has simple poles at $s_0$, coming from the Fourier constant term, so $\dhtE_k^{[0]}(z; s_0) \not\equiv 0$,
The basis result is again proved by induction on $m \ge 1$.
(The case $\lambda=0$ is  treated in detail in Theorems~1.1 and 2.2 of \cite{LR15K}.)

(3) Suppose $\lambda =- (\frac{1-k}{2})^2$, so that $s_0= \frac{1-k}{2}$.
By Proposition~\ref{prop:37} we have the constant term of $\dhtE_k^{[0]}(z; s_0) \not\equiv 0$, and by Theorem~\ref{thm:74}
we have $\dhtE_k^{[0]}(z; s_0) \in V_k^1(\lambda)$. 
By Remark~\ref{rem:72} all Taylor coefficients $\dhtE_k^{[2n+1]}(z; s_0)$ vanish
identically, and  we  have the recursion
$$
\left(\Delta_k + (\tfrac{1-k}{2})^2 \right) \dhtE_k^{[2n]}(z; s_0) = 2n(2n-1) \dhtE_k^{[2n-2]}(z; s_0).
$$
By induction on $n \ge 0$ we conclude that $\dhtE_k^{[2n]}(z; s_0) \in V_k^{n+1}(\lambda) \setminus V_k^{n}(\lambda)$,
the base case $n=0$ having been established.
\end{proof}

\begin{rem}\label{rem:75}
The vector space $E_k^m(\lambda)$ varies continuously as a function of $\lambda$
for all $\lambda \notin \{-(\frac{1-k}{2})^2, \allowbreak \frac k2(1-\frac k2)\}$, in the sense
that the functions $\dhtE_k^{[n]}(z;s_0)$ vary continuously in the parameter $s_0$ (restricting  $z$ to any  compact subset of $\HH$).
It has discontinuous ``jumps'' at the point $\lambda = -(\frac{1-k}{2})^2$ for
all weights and at the point $\lambda = \frac k2(1-\frac k2)$ for all nonzero weights.
\end{rem}


\section{Proof of Theorem~\ref{thm:main1}} \label{sec:Proofs} 

For the reader's convenience, we restate Theorem~\ref{thm:main1} here.

\begin{namedthm*}{Theorem~\ref{thm:main1}}
Fix $k\in 2\Z$. 
For $\lambda\in \CC$ fix $s_0 \in \CC$ such that $\lambda= s_0( s_0 +k -1)$.
\begin{enumerate}
\item The complex vector space $V_k^m(\lambda)$ is finite dimensional, with
\begin{equation}\label{eq:dim-vk-leq}
	\dim V_k^m(\lambda) \leq m + m \dim S_k^1(\lambda),
\end{equation}
where $S_k^1(\lambda)$ is the space of Maass cusp forms of weight $k$ and eigenvalue $\lambda$.

\item This space decomposes as
\[
	V_k^m(\lambda) =E_k^m(\lambda) \oplus S_k^m(\lambda),
\]
in which the Eisenstein series space $E_k^m(\lambda)$ is spanned by certain 
Taylor coefficients of shifted Eisenstein series  and $S_k^m(\lambda)$ is
a recursively defined space of  ``generalized $m$-harmonic Maass cusp forms.''
Both vector spaces $E_k^m(\lambda)$ and
$S_k^m(\lambda)$ are closed under the action of $\Delta_k - \lambda$.
	
\item For all  $\lambda \in \CC$ the space $E_k^m(\lambda)$ has dimension $m$.
\begin{enumerate}[(i)]
 	\item
    For $\lambda \ne -(\frac{1-k}{2})^2$  it  has a basis consisting of 
    the Taylor coefficient functions 
    \[
    	\dhtE^{[j+r]}(z; s_0) := \frac{\partial^{j+r}}{\partial s^{j+r}} \dhtE_k(z; s) \big|_{s= s_0}
		\quad \mbox{for} \quad 0 \le j \le m-1,
	\]
 	where $r$ is minimal such that 
	$\dhtE_k^{[r]}(z; s_0) \not\equiv 0$. Here $r=0$ unless  $\lambda=\frac k2(1-\frac k2)$
	and $k \ne 0$, in which case   $r=1$. 
	
	\item
	For $\lambda = - (\frac{1-k}{2})^2$ and $s_0 =\frac{1-k}{2}$,  a basis is given by  the even-indexed Taylor
	coeffcient functions $\dhtE^{[2j]}(z; s_0)$ 
	for $0 \le j \le m-1$. All odd-indexed functions $\dhtE^{[2j+1]}(z; s_0) \equiv 0$.
\end{enumerate}

\item 
For $m \ge 1$ one has 
\[
	\dim \big( S_k^{m}(\lambda)\big) \le m \dim \big(S_k^1(\lambda) \big).
\]
\end{enumerate}
\end{namedthm*}

\begin{proof}
(1)
We prove the upper bound \eqref{eq:dim-vk-leq} by induction on $m \ge 1$. 
The base case $m=1$ holds since $V_k^1(\lambda) = E_k^1(\lambda) \oplus S_k^1(\lambda)$, where $S_k^1(\lambda)$ is the space of Maass cusp forms 
(those forms having Fourier constant term equal to $0$), and since $\dim E_k^1(\lambda)=1$ by Theorem~\ref{thm:74}.
We verify by induction on $m \ge 1$ the hypothesis
\begin{equation} \label{eq:dim-ind-hyp}
\dim  V_k^{m}(\lambda) - \dim V_{k}^{m-1}(\lambda) \le \dim E_k^{1}(\lambda) + \dim S_k^1(\lambda),
\end{equation}
where we set $V_k^0(\lambda)=\{0\}$.
Since $V_k^{m-1}(\lambda) \subset V_k^{m}(\lambda)$ we may define the quotient space 
$W_k(\lambda) := V_k^{m}(\lambda)/ V_k^{m-1}(\lambda)$.
Since $(\Delta_k - \lambda) : V_k^m(\lambda)  \to V_k^{m-1} (\lambda)$ for all $m\geq 1$, the map
\begin{equation} \label{eq:wk-map}
	(\Delta_k - \lambda) : W_k^m(\lambda) \to W_k^{m-1}(\lambda)
\end{equation}
is well-defined.
We claim this map is injective.
Given $f\in V_k^m(\lambda)$ let $[f]$ denote the coset in $W_k^m(\lambda)$ containing $f$.
Let $f,g\in V_k^m(\lambda)$ and suppose that
\[
	(\Delta_k - \lambda)[f] = (\Delta_k - \lambda)[g] \quad \mbox{in} \quad W_k^{m-1}(\lambda).
\]
Then $(\Delta_k-\lambda)(f-g)\in V_k^{m-2}(\lambda)$.
It follows that 
\[
	(\Delta_k-\lambda)^{m-1}(f-g) = (\Delta_k-\lambda)^{m-2} \big((\Delta_k-\lambda)(f-g)\big) = 0,
\]
whence $f-g\in V_k^{m-1}$, i.e. $[f]=[g]$.
This shows that the map \eqref{eq:wk-map} is injective, from which it follows that
\[
	\dim W_k^m(\lambda) \le \dim W_k^{m-1}(\lambda).
\]
The latter inequality verifies the induction hypothesis \eqref{eq:dim-ind-hyp}.
We conclude that
\[
	\dim V_k^m(\lambda) \leq m + m \dim S_k^1(\lambda).
\]

(2) Given $f\in V_k^m(\lambda)$ let
\[
	CT(f) = \sum_{j=0}^{m-1} \left( c_{0,j}^+(f) u_{k,0}^{[j],+}(y,s_0) + c_{0,j}^-(f)u_{k,0}^{[j],-}(y,s_0) \right)
\]
denote the Fourier constant term of $f$ (see Theorem~\ref{lem:abstractFourier}).
We define a Hermitian scalar product on the vector space $V_k^m(\lambda)$ by
\begin{equation}
	\left\langle F, G \right\rangle_{s_0} := \sum_{j\geq 0} \overline{c_{0,j}^+(F)} c_{0,j}^+(G) + \sum_{j\geq 0} \overline{c_{0,j}^-(F)} c_{0,j}^-(G).
\end{equation}
This sum is finite because $c_{0,j}^+(f)=c_{0,j}^-(f)=0$ for $j$ sufficiently large.

The spaces $E_k^m(\lambda)$ were defined in Section~\ref{sec:new72}.
We start for $m=1$ with the usual decomposition
\begin{equation*}
	V_k^1(\lambda) = E_k^1(\lambda) + S_k^1(\lambda)
\end{equation*}
in which $E_k^1(\lambda)$ is spanned by  a single Eisenstein series, 
$\widetilde{E}_k(z) := \dhtE_k^{[r]}(z;s_0)$, where $r\in \{0,1\}$ (see Theorem~\ref{thm:74}).
{The proof of Theorem ~\ref{thm:74} showed that $\widetilde{E}_k(z)$ has a non-vanishing Fourier constant term.}
The space $S_k^1(\lambda)$ consists of the cusp forms, which have identically zero Fourier constant term.

For the Eisenstein series $\widetilde{E}_k(z)$, we have all $c_{0,j}^\pm(\widetilde{E}_k)=0$ if $j\geq 1$
(since it is annihilated by $\Delta_k -\lambda$) hence for all $G\in V_k^m(\lambda)$,
\begin{equation} \label{eq:<Ek,G>}
	\left\langle \widetilde{E}_k, G \right\rangle_{s_0} = \overline{c_{0,0}^+(\widetilde{E}_k)} c_{0,0}^+(G) + \overline{c_{0,0}^-(\widetilde{E}_k)} c_{0,0}^-(G).
\end{equation}
Here at least one of $c_{0,0}^+(\widetilde{E}_k)$, $c_{0,0}^-(\widetilde{E}_k)$ is nonzero.

We now recursively construct  for $m\geq 2$ a decomposition
\begin{equation*}
	V_k^m(\lambda) = E_k^m(\lambda) + S_k^m(\lambda),
\end{equation*}
using the Hermitian scalar product as follows.
Let $\widetilde{S}_k^m(\lambda)$ denote the space
\[
	\widetilde{S}_k^m(\lambda) := \left\{ G \in V_k^m(\lambda) : (\Delta_k-\lambda) G \in S_k^{m-1}(\lambda) \right\},
\]
and define, for $m\geq 2$, the subspace
\begin{equation*}
	S_k^m(\lambda) := \left\{ G \in \widetilde S_k^m(\lambda) : \left\langle \widetilde E_k, G \right\rangle_{s_0} =0 \right\}.
\end{equation*}
Note that $S_k^1(\lambda)$ satisfies the property above as well.
This definition ensures that $\widetilde E_k(z)\notin S_k^m(\lambda)$, and that
\begin{equation} \label{eq:Delta-lowers-depth}
	(\Delta_k-\lambda) S_k^m(\lambda) \subset S_k^{m-1}(\lambda).
\end{equation}
In the following claims, we prove that the ``generalized cusp form'' space $S_k^m(\lambda)$ has the required properties.

{\em Claim 1}: $S_k^{m-1}(\lambda)\subseteq S_k^m(\lambda)$.

The definition of $S_k^m(\lambda)$ includes all elements of $S_k^{m-1}(\lambda)$ because the orthogonality condition for $\widetilde E_k(z)$ in the product uses only the first two coefficients in the constant term 
(see \eqref{eq:<Ek,G>}). This proves Claim 1.

{\em Claim 2}: $E_k^m(\lambda) \cap S_k^m(\lambda) = \{0\}$.

Suppose, by way of contradiction, that $0\neq G(z) \in E_k^m(\lambda) \cap S_k^m(\lambda)$. 
By \eqref{eq:Delta-lowers-depth} and Claim 1, $S_k^m(\lambda)$ is closed under the action of $\Delta_k-\lambda$.
We first treat the case $\lambda \notin \{-(\frac {1-k}2)^2, \frac k2(1-\frac k2)\}$.
By Theorem~\ref{thm:74}(1) we have
\[
	G(z) = \sum_{j=0}^{m-1} \alpha_j \dhtE_k^{[j]}(z,s_0).
\]
Let $\ell$ denote the largest integer such that $\alpha_\ell \neq 0$.
Now $ G \in S_k^m(\lambda)$ so  $(\Delta_k-\lambda)^{\ell} G \in S_k^m(\lambda)$.
However, using the recursion of Theorem~\ref{thm:71} we have
\[
	(\Delta_k-\lambda)^{\ell}(G(z)) = \alpha_\ell (\ell)! (2s_0+k-1)^{\ell} \dhtE_k^{[0]}(z;s_0).
\]
Since $2s_0+k-1\neq 0$ this is a nonzero scalar multiple of $\widetilde E_k(z)$, contradicting $\widetilde E_k(z)\notin S_k^m(\lambda)$.
The cases $\lambda \in \{-(\frac {1-k}2)^2, \frac k2(1-\frac k2)\}$ are handled by similar arguments using Theorem~\ref{thm:74} (2) and (3).
This proves Claim 2.

{\em Claim 3}: $V_k^m(\lambda) = E_k^m(\lambda) + S_k^m(\lambda)$.

By Claim 2 it suffices to show that
\begin{equation} \label{eq:dim-geq}
	\dim \left(E_k^m(\lambda)\right) + \dim\left(S_k^m(\lambda)\right) \geq \dim \left(V_k^m(\lambda)\right).
\end{equation}
We first show via an inductive argument on $m \ge 1$ that
\begin{equation} \label{eq:dim-geq-til}
	\dim \left(\widetilde S_k^m(\lambda) \right) \geq \dim \left(V_k^m(\lambda)\right) - \dim \left(E_k^m(\lambda)\right) + 1.
\end{equation}
{ This bound holds for the base case $m=1$ since $\widetilde S_k^1(\lambda) = V_k^1(\lambda)$ and $\dim \left(E_k^m(\lambda)\right)=1$.}
Now let  $F\in V_k^m(\lambda)$ so $(\Delta_k-\lambda)F \in V_k^{m-1}(\lambda)$, where $V_k^{m-1}(\lambda) = E_k^{m-1}(\lambda) + S_k^{m-1}(\lambda)$ by the inductive hypothesis.
Now $(\Delta_k-\lambda) E_k^m(\lambda)$ has image $E_k^{m-1}(\lambda)$ and $(\Delta_k-\lambda) \widetilde E_k(z) = 0$.
Therefore there is a subspace $\widetilde E_k^m(\lambda) \subseteq E_k^m(\lambda)$ of codimension at least $1$ whose image under $(\Delta_k-\lambda)$ is the full space $E_k^{m-1}(\lambda)$.
Thus we can find an element $ E \in \widetilde E_k^m(\lambda) $ such that $(\Delta_k-\lambda)(F- E) \in S_k^{m-1}(\lambda)$.
Therefore
\[
	\dim \left(\widetilde S_k^m(\lambda) \right) \geq \dim \left(V_k^m(\lambda)\right) - \dim \left( \widetilde E_k^m(\lambda) \right) \ge
\dim \left(V_k^m(\lambda)\right) - \dim \left(E_k^m(\lambda)\right)	+1,
\]
which proves \eqref{eq:dim-geq-til}.
To prove \eqref{eq:dim-geq} it suffices to show that
\begin{equation} \label{eq:Sk-dim-leq}
	\dim \left(\widetilde S_k^m(\lambda)\right) \leq \dim \left(S_k^m(\lambda)\right) + 1.
\end{equation}
This bound follows from the definition of $S_k^m(\lambda)$, which shows that every element of $\widetilde S_k^m(\lambda)$ differs from an element of $S_k^m(\lambda)$ by a multiple of $\widetilde E_k(z)$.
The inequalities \eqref{eq:dim-geq-til} and \eqref{eq:Sk-dim-leq} together prove \eqref{eq:dim-geq}. This proves Claim 3.

(3) This result summarizes the content of Theorem \ref{thm:74}.

(4) This follows immediately from statements (1) and (3). \qedhere
\end{proof}

\subsection*{Acknowledgments}
Some  of the work of the  second author (J. L.) 
was done at ICERM, where he was a Clay Senior Fellow; he thanks  the Clay Foundation for support.
The authors  thank Mike Kelly for helpful remarks.

\appendix


\section{Derivatives of Whittaker functions in the second index}\label{sec:AppA}

We give asymptotic expansions for Whittaker functions and for the
derivatives with respect to $\mu$ of parametrized families of Whittaker functions,
used in Section~\ref{sec:FourierExp4}.
Fix $\delta>0$. 
For $z \in \C$ with $|z|\to \infty$ in $|\arg z|\leq \frac{3\pi}2-\delta$ we have the asymptotic expansion \cite[(13.19.3)]{NIST}
\begin{equation} \label{eq:W-asymp}
	W_{\kappa,\mu}(z) \sim e^{-\frac z2} z^\kappa \sum_{n=0}^\infty \frac{\left(\frac 12+\mu-\kappa\right)_n\left(\frac 12-\mu-\kappa\right)_n}{n!} (-z)^{-n},
\end{equation}
where $(a)_n$ is the Pochhammer symbol
\[
	(a)_n := \frac{\Gamma(a+n)}{\Gamma(a)} = a(a+1)\cdots(a+n-1).
\]
When one of $\frac 12\pm \mu-\kappa$ is a negative integer or zero, the series \eqref{eq:W-asymp} terminates; in that case $W_{\kappa,\mu}(z)$ is equal to its asymptotic expansion.

Differentiating asymptotic expansions (especially with respect to parameters) requires special care.
In the following proposition we show that the asymptotic expansions of the $\mu$-derivatives of $W_{\kappa,\mu}(z)$ are obtained by differentiating \eqref{eq:W-asymp} term-by-term.


\begin{proposition} \label{prop:whit-deriv-asy}
Fix $\delta>0$, and for $\mu_0\in \C$ fix a small open neighborhood $U\subset \C$ of $\mu_0$.
For $\mu\in U$ and for $z \in \C$ with $|z|\to \infty$ in $|\arg z|\leq \frac{3\pi}2-\delta$ we have the asymptotic expansion
\begin{equation} \label{eq:whit-deriv-asy}
	\frac{\partial^m}{\partial \mu^m} W_{\kappa,\mu}(z) \sim e^{-\frac z2} z^\kappa \sum_{n=0}^\infty \frac{\partial^m}{\partial \mu^m} \frac{\left(\frac 12+\mu-\kappa\right)_n\left(\frac 12-\mu-\kappa\right)_n}{n!} (-z)^{-n}.
\end{equation}
\end{proposition}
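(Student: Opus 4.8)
The plan is to reduce the statement to the standard principle that an asymptotic expansion depending analytically on a parameter may be differentiated in that parameter term-by-term, provided the remainder is uniform in the parameter. The mechanism is Cauchy's integral formula in the $\mu$-variable, and the crucial input will be the uniformity of the error term in \eqref{eq:W-asymp} over a complex neighborhood of $\mu_0$.

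First I would record the truncated form of \eqref{eq:W-asymp} with an explicit remainder. Fix $N\ge 1$ and set
\[
	R_N(\mu,z) := e^{\frac z2} z^{-\kappa} W_{\kappa,\mu}(z) - \sum_{n=0}^{N-1} \frac{\left(\frac 12+\mu-\kappa\right)_n\left(\frac 12-\mu-\kappa\right)_n}{n!} (-z)^{-n}.
\]
Since $W_{\kappa,\mu}(z)$ is entire in $\mu$ for fixed $\kappa$ and $z$ (as noted in Section~\ref{subsec:A1}) and the Pochhammer products are polynomials in $\mu$, the function $R_N(\cdot,z)$ is entire in $\mu$. The content of \eqref{eq:W-asymp} is that $R_N(\mu,z)=O(z^{-N})$ as $|z|\to\infty$ in the sector $|\arg z|\le \frac{3\pi}2-\delta$; the key point, which I would extract from the error bounds accompanying the expansion in \cite[\S 13.19]{NIST}, is that this estimate is uniform for $\mu$ in compact subsets of $\C$. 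I would then choose a closed disk $\overline D$ centered at $\mu_0$ with $U\subset \overline D$, and a radius $\rho>0$ small enough that the circle $|\zeta-\mu|=\rho$ lies in $\overline D$ for every $\mu\in U$; this yields a constant $C_N$ with $|R_N(\zeta,z)|\le C_N|z|^{-N}$ for all $\zeta\in\overline D$ and all $z$ in the sector.

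Next I would apply Cauchy's integral formula to the $m$-th $\mu$-derivative of the remainder: for $\mu\in U$,
\[
	\frac{\partial^m}{\partial\mu^m} R_N(\mu,z) = \frac{m!}{2\pi i} \oint_{|\zeta-\mu|=\rho} \frac{R_N(\zeta,z)}{(\zeta-\mu)^{m+1}}\, d\zeta,
\]
and estimate the right-hand side by the uniform bound to obtain
\[
	\left| \frac{\partial^m}{\partial\mu^m} R_N(\mu,z) \right| \le \frac{m!\, C_N}{\rho^{m}}\, |z|^{-N}
\]
uniformly for $\mu\in U$. Because the $\mu$-independent prefactor $e^{\frac z2}z^{-\kappa}$ commutes with $\frac{\partial^m}{\partial\mu^m}$, this says precisely that
\[
	e^{\frac z2} z^{-\kappa} \frac{\partial^m}{\partial\mu^m} W_{\kappa,\mu}(z) - \sum_{n=0}^{N-1} \frac{\partial^m}{\partial\mu^m} \frac{\left(\frac 12+\mu-\kappa\right)_n\left(\frac 12-\mu-\kappa\right)_n}{n!} (-z)^{-n} = O(z^{-N}).
\]
Since $N$ was arbitrary, this is exactly the asymptotic expansion \eqref{eq:whit-deriv-asy}, and the argument concludes.

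The main obstacle is the uniformity claim in the second step. The expansion \eqref{eq:W-asymp} as quoted only asserts a pointwise-in-$\mu$ asymptotic, whereas the Cauchy-integral argument requires the remainder bound to hold uniformly as $\zeta$ ranges over the disk $\overline D$. I expect this to follow from the explicit remainder estimates in the DLMF, since $\mu$ enters only through the Pochhammer coefficients and an error integral that is locally bounded in $\mu$; the care needed is to confirm that the implied constants can be taken uniform both over $\overline D$ and over the full sector $|\arg z|\le \frac{3\pi}2-\delta$. Once that uniformity is secured, the remaining steps are routine.
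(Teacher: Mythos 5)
Your strategy is genuinely different from the paper's. The paper does not treat the known expansion \eqref{eq:W-asymp} as a black box to be differentiated; instead it re-derives the expansion from the Mellin--Barnes representation \eqref{W-mellin-barnes}, shifts the contour to $\re{s}=-N-\tfrac12$ to produce the partial sum plus an explicit remainder integral \eqref{eq:W-residue+int}, and then justifies differentiating \emph{under the integral sign} in $\mu$: the $\mu$-derivatives of the integrand bring in digamma factors, which are controlled by $\psi(s)\sim\log|s|$ and Stirling-type bounds, so the differentiated remainder integral still converges absolutely and is $O(|z|^{-N-\frac12})$. Your route instead regards the undifferentiated remainder $R_N(\mu,z)$ as an analytic family in $\mu$ and uses Cauchy's integral formula to convert a uniform bound $|R_N(\zeta,z)|\le C_N|z|^{-N}$ on a disk into the same bound, up to a factor $m!\,\rho^{-m}$, for every $\mu$-derivative. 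That soft argument is the classical way to differentiate an asymptotic expansion with respect to a parameter, and when its hypothesis is available it is shorter and avoids all digamma estimates.

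The gap is exactly the one you flagged, and it is not cosmetic: the $\mu$-uniformity of the remainder bound is the entire analytic content of the proposition, and your proposal assumes it rather than proves it. The source cited for \eqref{eq:W-asymp}, DLMF (13.19.3), is a pointwise asymptotic statement with no error bounds attached, so none of the ingredients you actually invoke produces the constant $C_N$ uniform over $\overline{D}$ and over the whole sector $|\arg z|\le\frac{3\pi}{2}-\delta$. The claim is true and can be sourced, but doing so requires going behind the cited formula: writing $W_{\kappa,\mu}(z)=e^{-z/2}z^{\mu+\frac12}\,U\left(\tfrac12+\mu-\kappa,\,1+2\mu,\,z\right)$, the $\mu$-dependent prefactor cancels against the factor $z^{-a}$ in the expansion of $U$, and the explicit error bounds of DLMF \S 13.7(ii) for $U(a,b,z)$ hold in precisely this sector with constants that are continuous in the parameters, hence bounded on a compact $\mu$-disk and finite throughout $|\arg z|\le\frac{3\pi}{2}-\delta$. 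Alternatively, the paper's contour-shift identity \eqref{eq:W-residue+int}, used only at $m=0$, already gives the uniform bound, since the remainder integral there is bounded locally uniformly in $\mu$. This need to go behind (13.19.3) is precisely why the paper argues from the Mellin--Barnes representation in the first place. As written, then, your argument is a correct reduction of the proposition to an unverified (though true) strengthening of the cited input; to make it a proof you should either import the \S 13.7(ii) bounds explicitly, with the Kummer-function translation above, or reproduce the contour-integral estimate.
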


\begin{proof}
We begin with the Mellin-Barnes integral representation \cite[Section 16.4]{WW27} for $W_{\kappa,\mu}(z)$, valid for $|\arg z|\leq \frac{3\pi}{2}-\delta$ and all $\kappa,\mu$ such that neither of $\frac 12\pm\mu+\kappa$ is a positive integer:
\begin{equation} \label{W-mellin-barnes}
	M_{\kappa,\mu}(z) = \frac{e^{-\frac z2}z^\kappa}{2\pi i} \int_{-i\infty}^{i\infty} \frac{\Gamma(s)\Gamma(\frac 12-\mu-\kappa-s)\Gamma(\frac 12+\mu-\kappa-s)}{\Gamma(\frac 12-\mu-\kappa)\Gamma(\frac 12+\mu-\kappa)} z^s \, ds.
\end{equation}
The contour in \eqref{W-mellin-barnes} loops if necessary so that it separates the poles of $\Gamma(s)$ and 
\[
	G_{\kappa,\mu}(s) := \frac{\Gamma\left(\frac 12-\mu-\kappa-s\right)\Gamma\left(\frac 12+\mu-\kappa-s\right)}{\Gamma(\frac 12-\mu-\kappa)\Gamma(\frac 12+\mu-\kappa)}.
\]
Fix a positive integer $N$ such that for each $\mu\in U$ (with neither of $\frac 12\pm\mu+\kappa$ a positive integer), the poles of $G_{\kappa,\mu}(s)$ are to the right of the line $\re{s}=-N-\frac 12$.
Following Section 16.4 of \cite{WW27} we find that
\begin{equation} \label{eq:W-residue+int}
	W_{\kappa,\mu}(z) = e^{-\frac z2}z^{\kappa} \left( \sum_{n=0}^N  \frac{G_{\kappa,\mu}(-n)}{n!}(-z)^{-n} + \frac{1}{2\pi i} \int_{-N-\frac 12-i\infty}^{-N-\frac 12+i\infty} \Gamma(s) G_{\kappa,\mu}(s) z^s\, ds \right).
\end{equation}
As above, the contour loops if necessary to avoid poles of the integrand.
Note that for $\kappa,\mu$ with one of $\frac 12\pm\mu+\kappa$ a positive integer, we have $G_{\kappa,\mu}(s)\equiv 0$, so \eqref{eq:W-residue+int} holds for all $\mu\in U$.

We can differentiate with respect to $\mu$ under the integral sign as long as
\begin{equation} \label{eq:int-deriv-N}
	\int_{-N-\frac 12-i\infty}^{-N-\frac 12+i\infty} \frac{\partial^m}{\partial \mu^m} \Gamma(s) G_{\kappa,\mu}(s) z^s\, ds
\end{equation}
is absolutely convergent.
Let $\psi(s)$ denote the digamma function $\psi(s)=\frac{\Gamma'}{\Gamma}(s)$, and define
\[
	H_{\kappa,\mu}(s) := -\psi\left(\tfrac 12-\mu-\kappa-s\right) + \psi\left(\tfrac 12+\mu-\kappa-s\right) - \psi\left(\tfrac 12-\mu-\kappa\right) + \psi\left(\tfrac 12+\mu-\kappa\right),
\]
so that
\[
	\frac{\partial}{\partial \mu} G_{\kappa,\mu}(s) = G_{\kappa,\mu}(s) H_{\kappa,\mu}(s).
\]
Iterating the latter equation, we find that
\begin{equation} \label{eq:G-deriv-psi}
	\frac{\partial^m}{\partial \mu^m} G_{\kappa,\mu}(s) = G_{\kappa,\mu}(s) \left( H_{\kappa,\mu}(s)^m + R \right),
\end{equation}
where $R$ is a polynomial of degree $m-1$ in $H_{\kappa,\mu}(s)$ and its $\mu$-derivatives.
For large $|s|$ with $|\arg s|\leq \pi-\delta$ we have  (by differentiating the asymptotic expansion \cite[(5.11.2)]{NIST}, see \cite[Section 2.1(ii)]{NIST}) the estimates
\begin{equation} \label{eq:psi-asymp}
	\psi(s) \sim \log |s| \quad \text{ and } \quad \psi^{(j)}(s) \asymp \frac{1}{|s|^j}, \quad j\geq 1.
\end{equation}
Setting $s=-N-\frac 12+it$, it follows from \eqref{eq:G-deriv-psi}, \eqref{eq:psi-asymp}, and \cite[(5.11.9)]{NIST} that
\begin{equation}
	\frac{\partial^m}{\partial \mu^m} \Gamma(s) G_{\kappa,\mu}(s) z^s \ll_{m,\kappa,\mu} |z|^{-N-\frac 12} e^{-\frac{3\pi}2|t|} |t|^{N-2\re{\kappa}} (\log|t|)^m \quad \text{ as }t\to\pm\infty.
\end{equation}
Thus the integral \eqref{eq:int-deriv-N} is absolutely convergent and is $O(|z|^{-N-\frac 12})$.
It follows that for $\mu\in U$, the function $\frac{\partial^m}{\partial \mu^m} W_{\kappa,\mu}(z)$ has asymptotic expansion
\[
	\frac{\partial^m}{\partial \mu^m} W_{\kappa,\mu}(z) \sim e^{-\frac z2} z^\kappa \sum_{n=0}^\infty \frac{\partial^m}{\partial \mu^m} \frac{G_{\mu,\kappa}(-n)}{n!} (-z)^{-n}.
\]
This completes the proof since
\[
	G_{\kappa,\mu}(-n) = \left(\mfrac 12+\mu-\kappa\right)_n\left(\mfrac 12-\mu-\kappa\right)_n. \qedhere
\]
\end{proof}

The following corollary of Proposition~\ref{prop:whit-deriv-asy} is vital for the results of Section~\ref{sec:newsec4}.


\begin{corollary} \label{cor:lin-ind}
Fix $\mu_0\in \C$.
If $\mu_0\neq 0$ then for each $m\geq 0$ the set
\begin{equation} \label{eq:ind-set-1}
	\left\{ \frac{\partial^j}{\partial \mu^j} W_{\kappa,\mu}(z) \big|_{\mu=\mu_0} : 0\leq j\leq m \right\} \cup \left\{ \frac{\partial^j}{\partial \mu^j} W_{-\kappa,\mu}(-z) \big|_{\mu=\mu_0} : 0\leq j\leq m \right\}
\end{equation}
is linearly independent.
If $\mu_0=0$ then for each $m\geq 0$ the set
\begin{equation} \label{eq:ind-set-2}
	\left\{ \frac{\partial^{2j}}{\partial \mu^{2j}} W_{\kappa,\mu}(z) \big|_{\mu=0} : 0\leq j\leq m \right\} \cup \left\{ \frac{\partial^{2j}}{\partial \mu^{2j}} W_{-\kappa,\mu}(-z) \big|_{\mu=0} : 0\leq j\leq m \right\}
\end{equation}
is linearly independent.
\end{corollary}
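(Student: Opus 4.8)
The plan is to prove linear independence by separating the two families according to their exponential growth rate as $z\to+\infty$ along the positive real axis, and then, within each family, to match coefficients of the asymptotic expansion. This reduces the whole statement to an elementary assertion about a constant-coefficient differential operator in the parameter $\mu$ acting on an explicit family of polynomials, which can be settled by hand.

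First I would record the asymptotics. Writing $A_j(z) := \frac{\partial^j}{\partial\mu^j} W_{\kappa,\mu}(z)\big|_{\mu=\mu_0}$ and $B_j(z) := \frac{\partial^j}{\partial\mu^j} W_{-\kappa,\mu}(-z)\big|_{\mu=\mu_0}$ (with $-z = z e^{\pi i}$, so that $|\arg|\leq \frac{3\pi}{2}-\delta$ holds on the positive ray), Proposition~\ref{prop:whit-deriv-asy} applied to \eqref{eq:W-asymp} gives $A_j(z)\sim e^{-z/2}z^{\kappa}\sum_{n\geq 0}\tilde c_{j,n}z^{-n}$ and $B_j(z)\sim e^{z/2}z^{-\kappa}\sum_{n\geq 0} c_{j,n}z^{-n}$, up to a fixed nonzero phase. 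Here $c_{j,n}=q_n^{(j)}(\mu_0)$ and $\tilde c_{j,n}=(-1)^n\tilde q_n^{(j)}(\mu_0)$, where $q_n(\mu):=\frac{1}{n!}(\frac12+\kappa+\mu)_n(\frac12+\kappa-\mu)_n$ and $\tilde q_n$ is obtained by replacing $\kappa$ with $-\kappa$. The structural fact I would emphasize is that each $q_n$ and each $\tilde q_n$ is an \emph{even} polynomial in $\mu$ of degree exactly $2n$ (leading coefficient $\frac{(-1)^n}{n!}$), so $\{q_n:n\geq 0\}$ is a graded basis of the space of even polynomials; the same holds for $\{\tilde q_n\}$.

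Given a vanishing combination $\sum_j a_j A_j + \sum_j b_j B_j\equiv 0$, I would argue by exponential type. The $A$-part is $O(e^{-z/2}z^{C})$, hence decays, while the $B$-part is asymptotic to $e^{z/2}z^{-\kappa}$ times the formal amplitude series $\sum_j b_j P_j(z)$ with $P_j(z)=\sum_n c_{j,n}z^{-n}$. If that series had any nonzero coefficient, its leading term would make the $B$-part grow like $e^{z/2}$ times a power of $z$, which cannot be cancelled by the decaying $A$-part; hence $\sum_j b_j P_j\equiv 0$ identically, that is $(Dq_n)(\mu_0)=0$ for all $n$, where $D:=\sum_j b_j \frac{\partial^j}{\partial\mu^j}$. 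Once the $b_j$ are eliminated the relation reads $\sum_j a_j A_j\equiv 0$, and the same reasoning yields $(\tilde D\tilde q_n)(\mu_0)=0$ for all $n$, with $\tilde D:=\sum_j a_j\frac{\partial^j}{\partial\mu^j}$.

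It then remains to solve these parameter-space conditions. Since $\{q_n\}$ spans the even polynomials, $(Dq_n)(\mu_0)=0$ for all $n$ is equivalent to $(D\mu^{2l})(\mu_0)=0$ for all $l\geq 0$. For $\mu_0\neq 0$, for $2l\geq m$ one has $(D\mu^{2l})(\mu_0)=\mu_0^{2l}\sum_j b_j\mu_0^{-j}(2l)(2l-1)\cdots(2l-j+1)$, which exhibits the vanishing quantity as the value at $L=2l$ of a polynomial in $L$ whose leading coefficient is $b_{j_0}\mu_0^{-j_0}$ for the top index $j_0$; a polynomial vanishing at infinitely many integers is zero, forcing all $b_j=0$, and likewise all $a_j=0$. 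For $\mu_0=0$ only even $\mu$-derivatives occur, because $W_{\kappa,\mu}=W_{\kappa,-\mu}$ makes all odd $\mu$-derivatives vanish at $\mu=0$; then $D=\sum_j b_j\frac{\partial^{2j}}{\partial\mu^{2j}}$ and $(\frac{\partial^{2j}}{\partial\mu^{2j}}\mu^{2l})(0)=(2l)!\,\delta_{jl}$ isolates each coefficient, giving $b_l=0$ at once. The main obstacle is the middle step: justifying rigorously that the exponentially growing and decaying families cannot interfere, and in particular that the \emph{entire} amplitude series of a combination within one family must vanish (not merely its leading term). This rests squarely on the term-by-term differentiability of the asymptotic expansion guaranteed by Proposition~\ref{prop:whit-deriv-asy}, which is precisely why that proposition was established.
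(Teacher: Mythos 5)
Your proof is correct, and it rests on the same two analytic pillars as the paper's proof --- Proposition~\ref{prop:whit-deriv-asy} (term-by-term $\mu$-differentiation of \eqref{eq:W-asymp}) and the $e^{-z/2}$ versus $e^{+z/2}$ dichotomy separating the two families --- but the step that actually produces linear independence is genuinely different. The paper truncates each differentiated expansion to its two leading terms $F_{2j+1},F_{2j+2}$ (the orders $z^{-(j+1)},z^{-(j+2)}$) and proves each such \emph{pair} is independent by an explicit computation exploiting $\mu_0\neq 0$; you instead keep the whole amplitude series, recast its vanishing as the statement that the constant-coefficient operator $D=\sum_j b_j\partial_\mu^{\,j}$ annihilates every $q_n$ at $\mu_0$, use that the $q_n$ form a graded basis of the even polynomials to pass to the monomials $\mu^{2l}$, and finish with the falling-factorial polynomial argument. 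What your route buys is rigor at precisely the point where the paper is thinnest: consecutive pairs overlap at one asymptotic order (pair $j$ occupies orders $j+1,j+2$, pair $j+1$ occupies $j+2,j+3$), so the truncations themselves are linearly \emph{dependent} once two pairs are present (four polynomials lying in the three-dimensional span of $z^{-1},z^{-2},z^{-3}$), and pairwise independence of truncations alone cannot exclude cascading cross-pair cancellation; excluding it requires the full series, which is exactly what your operator argument uses --- it also gives Corollary~\ref{cor:lin-comb-asy} as an immediate byproduct. What the paper's route buys is brevity and explicit constants, and at $\mu_0=0$ it is airtight as written, since there the leading orders $z^{\kappa-j}$ are distinct. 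One point of order for the write-up: you must obtain $b_j=0$ for the growing family (via the algebraic step) \emph{before} asserting $\sum_j a_jA_j\equiv 0$ and invoking uniqueness of asymptotic expansions for the decaying family, since formal vanishing of the growing family's amplitude series only gives $\sum_j b_jB_j=o\bigl(e^{z/2}z^{-N}\bigr)$ for every $N$, which by itself places no constraint on the $A$-series; so complete the $B$-family analysis and algebra first, then repeat for the $A$-family.
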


\begin{proof}
Suppose first that $\mu_0\neq 0$.
For each $j\geq 0$ and $\ell=1,2$ let
\begin{equation*}
	F_{2j+\ell}(z) := \frac{\partial^{2j+\ell}}{\partial \mu^{2j+\ell}} \left( \frac{(\frac 12+\mu-\kappa)_{j+1}(\frac 12-\mu-\kappa)_{j+1}}{(j+1)!(-z)^{j+1}} + \frac{(\frac 12+\mu-\kappa)_{j+2}(\frac 12-\mu-\kappa)_{j+2}}{(j+2)!(-z)^{j+2}} \right)\Bigg|_{\mu=\mu_0}.
\end{equation*}
Since $(\frac 12+\mu-\kappa)_n(\frac 12-\mu-\kappa)_n$ is a polynomial in $\mu^2$ of degree $n$, we have
\begin{equation*}
	F_{2j+1}(z) = \frac{(2j+2)!}{(j+1)!}\frac{\mu_0}{z^{j+1}} + \frac{(2j+4)!}{6(j+2)!}\frac{\mu_0^3}{z^{j+2}} + \frac{\alpha_j \mu_0}{z^{j+2}}
\end{equation*}
for some $\alpha_j \in \C[\kappa]$.
Hence
\begin{equation*}
	\mu_0 \, F_{2j+2}(z) - F_{2j+1}(z) = \frac{(2j+4)!}{3(j+2)!}\frac{\mu_0^3}{z^{j+2}},
\end{equation*}
from which it follows that $F_{2j+1}(z)$ and $F_{2j+2}(z)$ are linearly independent (since $\mu_0\neq 0$).

By Proposition~\ref{prop:whit-deriv-asy} and the fact that $\frac{\partial^{2j+1}}{\partial \mu^{2j+1}}(\frac 12+\mu-\kappa)_n(\frac 12-\mu-\kappa)_n = 0$ for $n\leq j$ we have the asymptotic formula
\begin{equation*}
	\frac{\partial^{2j+\ell}}{\partial \mu^{2j+\ell}} W_{\kappa,\mu}(z) \sim e^{-\frac z2} z^\kappa F_{2j+\ell}(z).
\end{equation*}
It follows that the set \eqref{eq:ind-set-1} is linearly independent.

If $\mu_0=0$ then since the right-hand side of \eqref{eq:whit-deriv-asy} is an even function of $\mu$, all of the odd-order derivatives $\frac{\partial^{2j+1}}{\partial \mu^{2j+1}}W_{\kappa,\mu}(z) \big|_{\mu=0}$ are identically zero.
But by Proposition~\ref{prop:whit-deriv-asy} we have the asymptotic formula
\begin{equation*}
	\frac{\partial^{2j}}{\partial \mu^{2j}}W_{\kappa,\mu}(z) \sim \frac{(2j)!}{j!} e^{-\frac z2} z^{\kappa-j},
\end{equation*}
from which it follows that the set \eqref{eq:ind-set-2} is linearly independent.
\end{proof}

The proof of Corollary~\ref{cor:lin-ind} has the following immediate consequence.


\begin{corollary} \label{cor:lin-comb-asy}
Suppose that $y>0$.
All $\C$-linear combinations of the functions $\frac{\partial^j}{\partial\mu^j} W_{\kappa,\mu}(y)$ decay exponentially as $y\to \infty$, while all 
{nonzero} $\C$-linear combinations of the functions $\frac{\partial^j}{\partial\mu^j} W_{-\kappa,\mu}(-y)$ grow exponentially as $y\to\infty$.
\end{corollary}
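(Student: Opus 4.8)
The plan is to read both halves of the corollary directly off the term-by-term differentiable asymptotic supplied by Proposition~\ref{prop:whit-deriv-asy}. That proposition gives, for fixed $\kappa$ and base point $\mu_0$ and each $j\geq 0$, an asymptotic of the shape $\frac{\partial^j}{\partial\mu^j} W_{\kappa,\mu}(z)\big|_{\mu=\mu_0}\sim e^{-z/2}z^{\kappa}P_j(z)$, where $P_j$ is the power series in $1/z$ obtained by differentiating the Pochhammer prefactors in \eqref{eq:W-asymp} in $\mu$ and evaluating at $\mu_0$.

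First I would dispatch the decay statement. Taking $z=y>0$, the series $P_j(y)$ is bounded as $y\to\infty$, so each $\frac{\partial^j}{\partial\mu^j} W_{\kappa,\mu}(y)$ is $O\bigl(e^{-y/2}y^{\re{\kappa}}\bigr)$; consequently every finite $\C$-linear combination is $O\bigl(e^{-y/2}y^{\re{\kappa}}\bigr)$ and decays exponentially. No linear-independence input is needed here. For the growth statement I would instead substitute $z=-y=ye^{\pi i}$, which lies in the sector $|\arg z|\leq\frac{3\pi}{2}-\delta$ where Proposition~\ref{prop:whit-deriv-asy} applies. Since $e^{-z/2}=e^{y/2}$, each function $\frac{\partial^j}{\partial\mu^j}W_{-\kappa,\mu}(-y)$ takes the form $e^{y/2}(ye^{\pi i})^{-\kappa}\widetilde P_j(y)$, with $\widetilde P_j$ the analogue of $P_j$ for the parameter $-\kappa$; hence any $\C$-linear combination equals $e^{y/2}(ye^{\pi i})^{-\kappa}$ times a combination of the prefactors $\widetilde P_j$.

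The delicate point, and the main obstacle, is that these functions all carry the same factor $e^{y/2}$, so a nonzero combination could in principle cancel at leading order and fail to grow. This is exactly what the computation inside the proof of Corollary~\ref{cor:lin-ind} forbids: there the leading asymptotic prefactors of the derivatives $\frac{\partial^j}{\partial\mu^j}W_{-\kappa,\mu}(-y)$ are shown---splitting into the cases $\mu_0\neq 0$ and $\mu_0=0$---to be linearly independent functions of $y$. Therefore, for any nonzero choice of coefficients the combined prefactor $\sum_j c_j\widetilde P_j(y)$ is a nonzero asymptotic series, whose leading term is a fixed nonzero power of $y$; multiplying by $e^{y/2}$ overwhelms that polynomial factor and forces genuine exponential growth, which proves the corollary.
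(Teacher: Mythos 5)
Your proposal is correct and takes essentially the same route as the paper: the paper's entire proof is the remark that this is an ``immediate consequence'' of the proof of Corollary~\ref{cor:lin-ind}, i.e.\ of the termwise-differentiated asymptotic expansion from Proposition~\ref{prop:whit-deriv-asy} together with the linear independence of the prefactor series, which is precisely what you invoke to rule out cancellation of the leading $e^{y/2}$ behavior. Your write-up simply makes explicit (the easy decay bound, the substitution $z=ye^{\pi i}$ inside the valid sector, and the nonzero-leading-term argument) what the paper leaves implicit.
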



\section{Action of $\xi_k$-operator  on non-holomorphic Eisenstein series}\label{sec:AppC}

This appendix sketches  an alternate proof of  Proposition \ref{pr56} which works directly for all $s \in \CC$.

\begin{proposition}\label{prB1}
Let $k \in 2 \ZZ$. Then
$$
\xi_k \htE_k(z, s) = \begin{cases}
\htE_{2-k}(z, -\overline{s}) & \mbox{if} \quad k \le 0,\\
\overline{s}(\overline{s} + k -1 ) \htE_{2-k}(z, -\overline{s}) & \mbox{if} \quad k \ge 2.
\end{cases}
$$
\end{proposition}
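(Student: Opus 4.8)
The plan is to prove the identity by applying $\xi_k$ term-by-term to the Fourier expansion of $\htE_k(z,s)$ from Proposition~\ref{prop:FourierExpansionArbitrary}, rather than to the defining series \eqref{Eis}. The advantage, as advertised, is that this Fourier series converges absolutely and uniformly on compact subsets of $\HH$ for every $s\in\CC$, so no analytic continuation is needed and the (removable) poles at $k=0$ cause no trouble. The operator $\xi_k=2iy^k\overline{\partial/\partial\bar z}$ is conjugate-linear, so it may be moved inside the sum at the cost of conjugating each Fourier coefficient. It then suffices to match, coefficient by coefficient, the Fourier expansion of $\xi_k\htE_k(z,s)$ against that of $\htE_{2-k}(z,-\bar s)$, multiplied by $1$ if $k\le 0$ and by $\bar s(\bar s+k-1)$ if $k\ge 2$. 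I would split this into the non-constant terms and the constant term.

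For a non-constant mode, the $n$-th term of $\htE_k(z,s)$ has the shape $b_n(s)\,u_{k,n}^{[0],-}(y;s)\,e^{2\pi i nx}$, where $b_n(s)$ is the explicit ratio of gamma factors times a power of $|n|$ times $\sigma_{2s+k-1}(n)$ from Proposition~\ref{prop:FourierExpansionArbitrary}. Conjugate-linearity expresses $\xi_k$ of this term as $\overline{b_n(s)}$ times $\xi_k\big(u_{k,n}^{[0],-}(y;s)e^{2\pi i nx}\big)$, and the latter is computed by Lemma~\ref{le54}. One then checks that the result agrees with the $(-n)$-th term of the appropriately scaled $\htE_{2-k}(z,-\bar s)$. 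The tools needed are the evenness $W_{\kappa,\mu}=W_{\kappa,-\mu}$ (to align the second Whittaker index), $\overline{\Gamma(w)}=\Gamma(\bar w)$ and $\overline{\sigma_w(n)}=\sigma_{\bar w}(n)$, the divisor reflection $\sigma_{2s+k-1}(n)=|n|^{2s+k-1}\sigma_{1-2s-k}(n)$, and the duplication and reflection formulas for $\Gamma$. Following the appendix, I would record these non-constant computations only briefly, since they parallel the constant-term case.

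The constant term is where I would concentrate the work. Writing $C_0(y,s)=A(s)\,y^{s}+B(s)\,y^{1-s-k}$ with $A,B$ the explicit gamma–zeta factors of Proposition~\ref{prop:37}, a one-line computation from the definition gives $\xi_k(y^{a})=\bar a\,y^{\bar a+k-1}$ for any $a\in\CC$ and $y>0$. Hence, by conjugate-linearity,
\[
	\xi_k C_0(y,s)=\bar s\,\overline{A(s)}\,y^{\bar s+k-1}+(1-\bar s-k)\,\overline{B(s)}\,y^{-\bar s}.
\]
The constant term of $\htE_{2-k}(z,-\bar s)$ involves exactly the same two powers $y^{-\bar s}$ and $y^{\bar s+k-1}$, so matching reduces to two scalar identities between products of $\Gamma$-factors and completed zeta values. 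I would verify these using $\htz(w)=\htz(1-w)$, the reflection formula $\Gamma(w)\Gamma(1-w)=\pi/\sin\pi w$ together with $\sin\pi(\bar s+\tfrac{k}{2})=(-1)^{k/2}\sin\pi\bar s$ (valid since $k$ is even), and $\bar s\,\Gamma(\bar s)=\Gamma(\bar s+1)$. The decisive bookkeeping is the behavior of the absolute value in the gamma argument $s+\tfrac{k}{2}+\tfrac{|k|}{2}$: one has $|2-k|=|k|+2$ when $k\le 0$ but $|2-k|=|k|-2$ when $k\ge 2$, and it is precisely this asymmetry that makes the matching scalar equal to $1$ in the first case and forces the extra factor $\bar s(\bar s+k-1)=\bar\lambda$ in the second. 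A direct check of the $y^{\bar s+k-1}$ identity for $k\le 0$ already collapses, after the reflection manipulations, to $\Gamma(\bar s+1)/\Gamma(\bar s+\tfrac{k}{2})$ on both sides, which gives me confidence the scheme closes.

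The main obstacle will be organizing this gamma-factor bookkeeping cleanly across the two weight ranges rather than any conceptual difficulty: one must keep the $|k|$- versus $|2-k|$-dependence, the two conjugations $s\mapsto\bar s$ and $s\mapsto-\bar s$, and the Whittaker evenness all straight simultaneously. A secondary point requiring care is the legitimacy of conjugating and differentiating under the summation sign, which is justified by the uniform convergence of the Fourier series on compacta; this is exactly the feature that lets the argument run for all $s\in\CC$ at once and sidesteps the continuation and pole issues of the series-based proof in \cite{LR15K}.
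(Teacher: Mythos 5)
Your proposal is correct and follows essentially the same route as the paper's Appendix~\ref{sec:AppC} proof: apply $\xi_k$ term-by-term to the Fourier expansion of Proposition~\ref{prop:FourierExpansionArbitrary} (valid for all $s\in\CC$ by uniform convergence on compacta), handle the nonconstant modes via Lemma~\ref{le54} and Whittaker identities, and concentrate the detailed gamma--zeta bookkeeping on the constant term, where the reflection formula and the sign $(-1)^{k/2}$ produce the two cases. Your observation that the case split traces to $|2-k|=|k|+2$ for $k\le 0$ versus $|2-k|=|k|-2$ for $k\ge 2$ is a correct and slightly more explicit articulation of the mechanism the paper leaves implicit in its gamma-factor computations.
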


\begin{proof}
We compute the action of $\xi_k$ on the Fourier series of $E_k(z,s)$ term by term,
using the formulas in Lemma \ref{le54}. We assert that under the action of $\xi_k$ the $n$-th
the Fourier term maps to the $-n$-th term of $\htE_{2-k}(z, - \overline{s})$,
multiplied by the appropriate constant ($1$ or $\overline{s}(\overline{s}+k-1)$).
Futhermore, for the constant term the coefficients
of $y^s$ and $y^{1-s-k}$ are interchanged,
again multiplied by the appropriate constant.
Here we supply details proving the assertion for one specific case,  sufficient to uniquely
determine the multiplying constants. 
We write
\[
	C_0(y,s) = CT_k^+(s)y^s + CT_k^-(s)y^{1-s-k}
\]
for the constant term of $\htE_{k}(z,s)$.
By Proposition \ref{prop:FourierExpansionArbitrary} we have
\[
	CT_{k}^{+}(s) =  \frac{\Gamma \left( s+\frac{k}{2} + \frac{\abs{k}}{2} \right) }{\Gamma \left( s+\frac{k}{2} \right) }  \htz(2s+k)
\]
and
\[
CT_{k}^{-}(s) =(-1)^{\frac{k}{2}} \frac{\Gamma \left(  s+ \frac{k}{2} \right)  \Gamma  \left(  s+ \frac{k}{2} + \frac{\abs{k}}{2} \right) }{\Gamma( s+ k) \Gamma(s) } \htz(2-2s-k).
\]
We will show that
\begin{equation} \label{eq:CT-switch}
\xi_k\left(CT_{k}^{+}(s)y^s\right) = 
\begin{cases}
	CT_{2-k}^{-}(-\overline{s})y^{\bar s+k-1} & \text{ if } k \le 0,\\
	\overline{s}(\overline{s} + k -1) CT_{2-k}^{-}(-\overline{s})y^{\bar s+k-1} & \text{ if } k \ge 2.
\end{cases}
\end{equation}
The proof depends on the value of $k$ and  uses identities
for the Gamma function.

First, suppose that $k\leq 0$.
We compute that
\[
\xi_k\left(CT_k^+(s)y^s\right) = y^k CT_k^+(\bar s) \overline{\frac{\partial}{\partial y}y^s} = \frac{\bar s \Gamma(\bar s)}{\Gamma(\bar s+\frac k2)} \hat\zeta(2\bar s+k) y^{\bar s+k-1}.
\]
On the other hand,
\begin{align*}
CT_{2-k}^{-}(- \overline{s}) y^{\bar s+k-1} = (-1)^{1-\frac{k}{2}}\frac{\Gamma (1- \overline{s} - \frac{k}{2})} {\Gamma( -\overline{s})} \htz(2 \overline{s} +k)  y^{\overline{s}+k -1}.
\end{align*}
It remains to show (replacing $\bar s$ by $s$) that
\[
	\frac{s\Gamma(s)}{\Gamma(s+\frac k2)} = (-1)^{1-\frac k2} \frac{\Gamma(1-s-\frac k2)}{\Gamma(-s)}.
\]
Indeed, using $(-z)\Gamma(-z) = \Gamma(1-z)$ and $\Gamma(z) \Gamma(1-z) = \frac{\pi}{\sin \pi z}$ we find that
\[
	\frac{s\Gamma(-s)\Gamma(s)}{\Gamma(s+\frac k2)\Gamma(1-s-\frac k2)} = -\frac{\Gamma(s)\Gamma(1-s)}{\Gamma(s+\frac k2)\Gamma(1-s-\frac k2)} = -\frac{\sin \pi(s+\frac k2)}{\sin\pi s} = (-1)^{1-\frac k2},
\]
as desired.

Now suppose that $k\geq 2$.
We compute
\[
\xi_{k} \left( CT_{k}^{+}(s)y^s\right) = \frac{\bar s\Gamma(\overline{s}+k)}{\Gamma(\overline{s} + \frac{k}{2})}\htz(2 \overline{s}+k) y^{\overline{s} + k -1}.
\]
On the other hand,
\[
CT_{2-k}^{-}(- \overline{s})y^{\bar s+k-1} =
 (-1)^{1-\frac{k}{2}}\frac{\Gamma (1 - \overline{s} - \frac{k}{2})}{\Gamma (2-\overline{s} - k))}
  \htz(2 \overline{s} +k)
  y^{\overline{s}+k -1}.
\]
It remains to show that
\[
	\frac{\Gamma(s+k)}{\Gamma(s+\frac k2)} = (-1)^{1-\frac k2}(s+k-1) \frac{\Gamma(1-s-\frac k2)}{\Gamma(2-s-k)}.
\]
Indeed,
\[
	\frac{\Gamma(s+k)\Gamma(2-s-k)}{(s+k-1)\Gamma(s+\frac k2)\Gamma(1-s-\frac k2)} = -\frac{\Gamma(s+k)\Gamma(1-s-k)}{\Gamma(s+\frac k2)\Gamma(1-s-\frac k2)} = -\frac{\sin \pi(s+\frac k2)}{\sin\pi (s+k)} = (-1)^{1-\frac k2},
\]
as desired.
This completes the proof of \eqref{eq:CT-switch}.

We omit the details of similar assertions for $\xi_k (CT_{k}^{-}(s)y^{1-k-s})$
and for all other Fourier terms of index $n \ne 0$. 
For the cases  $n \ne 0$ various Whittaker function identities are required.
\end{proof}

\end{document}